\documentclass[12pt,a4paper]{article}
\usepackage{graphicx}
\usepackage[]{amsmath,amssymb,amsthm}
\usepackage{epstopdf}
\usepackage{amstext,amsfonts,graphicx,amscd,epsfig}

%----------------------NEW-----------------------------------------
\usepackage{subfig}
\usepackage{geometry}
\geometry{margin=25mm}
%\usepackage[shortlabels]{enumitem}

%\usepackage[normalem]{ulem}
%----------------------------------------------------------------------

\newtheorem{theorem}{Theorem}[section]
\newtheorem{proposition}[theorem]{Proposition}
\newtheorem{lemma}[theorem]{Lemma}
\newtheorem{corollary}[theorem]{Corollary}
\theoremstyle{definition}
\newtheorem{definition}[theorem]{Definition}
\theoremstyle{remark}
\newtheorem{remark}[theorem]{Remark}
\newcommand{\R}{\mathbb{R}}

\newcommand{\C}{\mathbb{C}}
\newcommand{\Z}{\mathbb{Z}}

\newcommand{\inn}{\textnormal{in}}

\newcommand{\ind}{\textnormal{index}}

\newcommand{\B}{{\cal B}}
\newcommand{\M}{\text{\sf \emph{M}}}

\usepackage[usenames, dvipsnames]{color}
\usepackage{tikz}

\begin{document}

\begin{center}
{\large{\bf Stability of cycles in a game of Rock-Scissors-Paper-Lizard-Spock}}\\
\mbox{} \\
\begin{tabular}{cc}
{\bf Sofia B.\ S.\ D.\ Castro$^{\dagger,\ddagger*}$} & {\bf Ana Ferreira$^{\ddagger}$} \\
{\small sdcastro@fep.up.pt} & {\small up200800262@edu.fep.up.pt } \\
{\bf Liliana Garrido-da-Silva$^{\dagger, \ddagger}$} & {\bf Isabel S.\ Labouriau$^{\ddagger}$} \\
{\small lilianagarridosilva@sapo.pt} & {\small islabour@fc.up.pt} 
\end{tabular}

\end{center}

\noindent $^{*}$ Corresponding author.

\noindent $^{\dagger}$ Faculdade de Economia, Rua Dr.\ Roberto Frias, 4200-464 Porto, Portugal.

\noindent $^{\ddagger}$ Centro de Matem\'atica, Universidade do Porto, Rua do Campo Alegre 687, 4169-007 Porto, Portugal .

\begin{abstract}
We study a system of ordinary differential equations in $\R^5$ that is used as a model both in population dynamics and in game theory, and 
is known to exhibit a heteroclinic network 
consisting in the union of
four types of elementary heteroclinic cycles.
We show the asymptotic stability of the network for parameter values in a range compatible with 
both population and game dynamics.
We obtain estimates of the relative attractiveness of each one of the cycles by computing their stability indices.
For the parameter values ensuring the asymptotic stability of the network 
we relate the attractiveness properties of each cycle to the others. In particular, for three of the cycles we show that if one of them has a weak form of attractiveness, then the other two are completely unstable.
We also show the existence of an open region in parameter space where all four cycles are completely unstable and the network is asymptotically stable, giving rise to intricate dynamics that has been observed numerically by other authors.
\end{abstract}

\noindent {\em Keywords:} heteroclinic cycle, heteroclinic network, asymptotic stability, essential asymptotic stability, fragmentary asymptotic stability, Rock-Scissors-Paper-Lizard-Spock game

\vspace{.3cm}

\noindent {\em AMS classification:} 34C37, 34A34, 37C75, 91A22, 92D25

\section{Introduction}

The Rock-Scissors-Paper-Lizard-Spock (RSPLS, henceforth) game is an extension of the traditional Rock-Scissors-Paper  (RSP) game and has become ubiquitous\footnote{It also appears in less scientific environments such as the television show ``The Big Bang Theory''.} in the dynamical systems literature, associated especially to population dynamics. Additionally to Rock beating Scissors, Scissors beating Paper and Paper beating Rock, two more actions, Lizard and Spock, are added to construct the following relations
\begin{center}
\begin{tabular}{ccc}
\hline
Rock & wins over & Scissors \\
 & and & Lizard \\
 \hline
 Scissors & win over & Paper \\
 & and & Lizard \\
 \hline
 Paper & wins over & Rock \\
  & and & Spock \\
  \hline
 Lizard & wins over & Paper \\
  & and & Spock \\
  \hline
Spock & wins over & Rock \\
 & and & Scissors \\
 \hline
\end{tabular}
\end{center}

In the context of game theory, these are considered actions chosen by a player, while in that of population dynamics these represent types or species in a population.
In this way, each type/action wins over two other types/actions while it loses when confronted with the remaining two types/actions. These interactions can be described by the graph in Figure~\ref{fig:game}, where each node corresponds to a type or choice of an action and a directed edge indicates that the starting node beats the end node. There are dynamical systems represented by ODEs that support the dynamics of the RSPLS game such as Lotka-Volterra systems or constructed by the methods of either Field \cite{Field2015} or Ashwin and Postlethwaite \cite{AshPos2013}. The key feature is the existence of a heteroclinic network. A heteroclinic cycle is  a union of a finite number of equilibria for the ODE with the trajectories connecting them in a cyclic fashion. A network is a connected union of finitely many cycles. The equilibria correspond to the nodes and the connecting trajectories correspond to edges in the graph. 

\begin{figure}[h!]
\centering
{\includegraphics{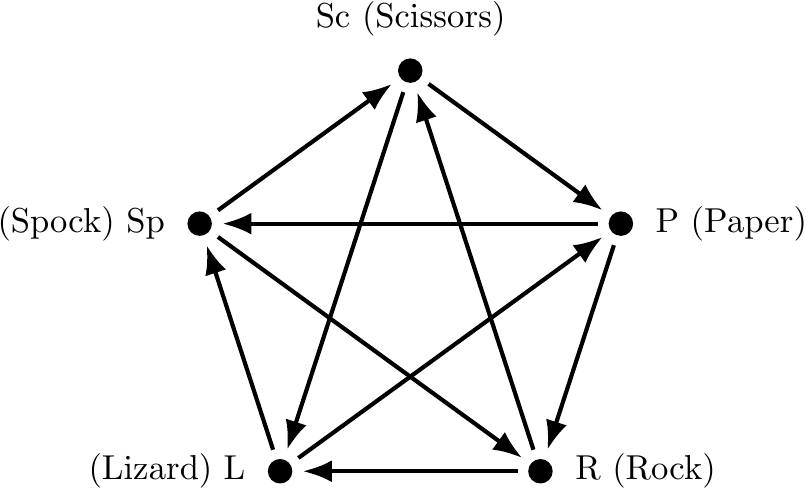}}
\caption{The RSPLS game: a directed edge indicates that the starting node beats the end node. In a heteroclinic network the connections have the opposite orientation.\label{fig:game}}
\end{figure}

A spatial version of both RSPLS and RSP is of interest to understand how different species occupy a planar finite lattice by interacting via reproduction and predatory behaviour. A given species can occupy a space in the planar lattice by either reproducing into an empty space or by predating another species occupying it. The dynamics in this instance are described by ``mean-field'' equations borrowed from physics. See the review by Szolnoki {\em et al.} \cite{SzoOliBaz} and He {\em et al.} \cite{HeTauZia}, Mowlaei {\em et al.} \cite{MowRomPlei} or Laird and Schamp \cite{LaiSch} for a description of how to derive the mean-field equations.\footnote{There is an abundance of references in the literature. We choose to mention only a few for clarity and the choice is uniquely based on our personal preferences. The reader interested in further detail and/or more examples can use the references within those we mention.}
The interactions produce a graph as above.
Parameters such as the invasion or mobility rates and reproduction rates can condition the outcome of the distribution on the lattice. 
In the language of dynamical systems these rates affect the eigenvalues of the Jacobian matrix at each node.
Important issues are those of coexistence of all available species or extinction of some species. See Park and Jang \cite{ParJan} and Kang {\em et al.} \cite{Kan_et_al2013,Kan_et_al2016a} for studies of coexistence of 5 species in a spatial version of RSPLS. Choices for the invasion and reproduction rates leading to the coexistence of some but not all the original 5 species appear in the work of Vukov {\em et al.} \cite{VukSzoSza} who extend the work of \cite{Kan_et_al2013} to contemplate more invasion rates and find that two species become extinct while the remaining three coexist. An analogous outcome is found by Cheng {\em et al.} \cite{Che_et_al2014} by looking at mesoscopic (i.e., intermediate scale) interactions whereas the modelling through PDEs supports an outcome of only two surviving species in Park {\em et al.} \cite{Par_et_al2017}.
All the results are obtained numerically.

Knebel and co-authors \cite{Kne_et_al2013,GeiKneFre} use the topology of the graph describing the interactions in each game to examine the ``interplay between the network structure and the strengths of interaction links on global stability'' and to classify coexistence networks, that is, those where all actions coexist for all strengths of the interactions. Again, a 3-action cycle seems to persist corresponding to the RSP cycle within the RSPLS game.

\begin{figure}[h!]
\centering
\subfloat[]{\includegraphics{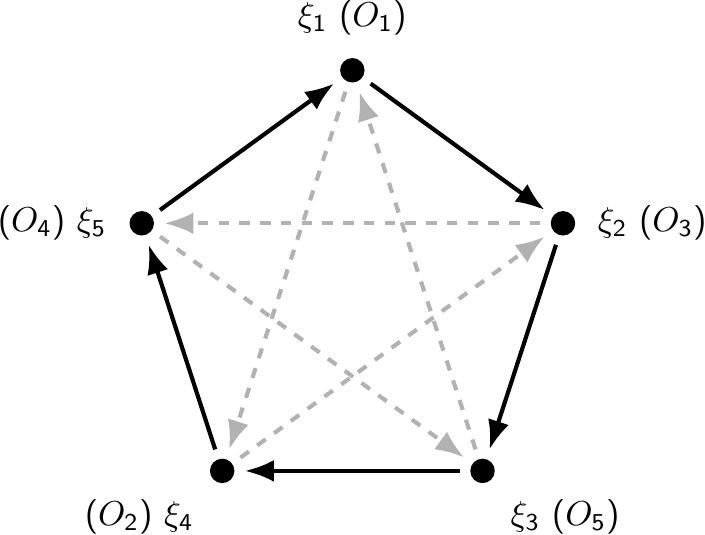}}\hfill
 \subfloat[]{\includegraphics{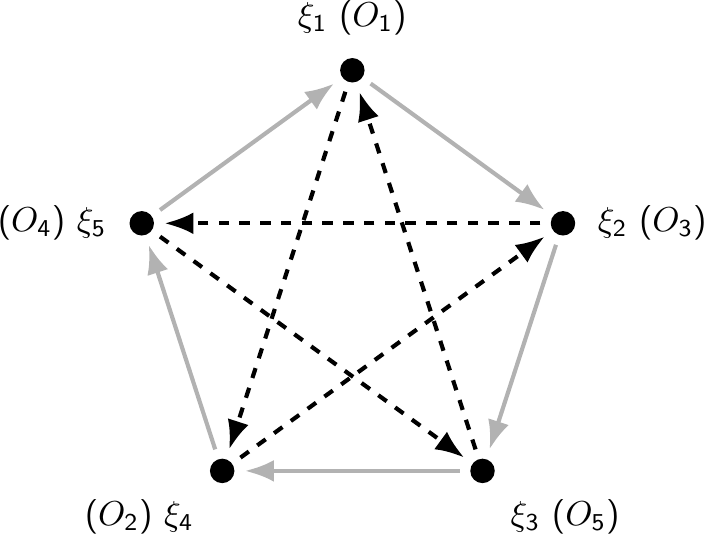}}  \\
  \subfloat[]{\includegraphics{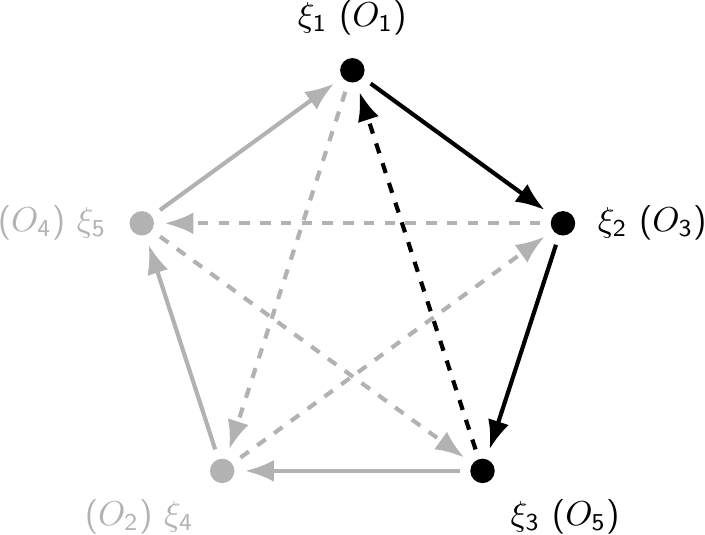}}\hfill
  \subfloat[]{\includegraphics{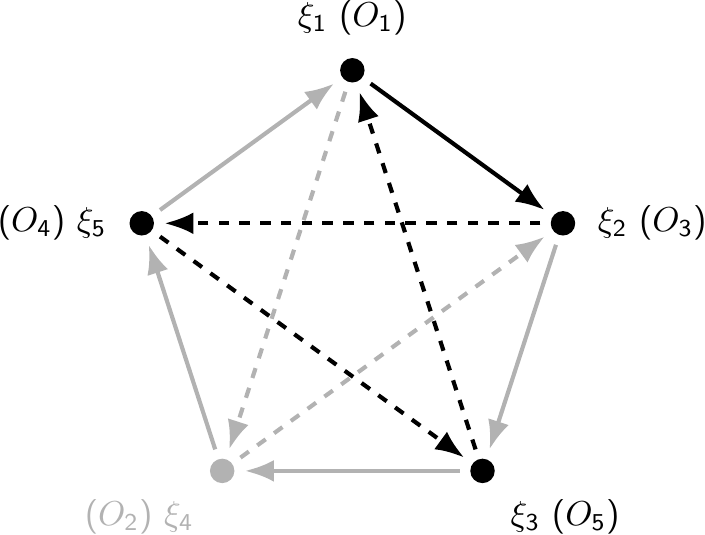}}
\caption{
(a)~The Rock-to-Paper sub-cycle; (b)~The Star or Rock-to-Spock cycle; (c)~The RSP sub-cycle; (d)~The Four-node sub-cycle.
Solid lines represent 2-dimensional connections, dashed lines are 1-dimensional. The sub-cycles are obtained by selecting one particular (1-dimensional) connecting trajectory from the 2-dimensional connection.
\label{fig:cycles}}
\end{figure}

We contribute to a theoretical understanding of the dynamics generated by the RSPLS game by studying the stability properties of 
four distinguished cycles in this network. Namely, see Figure~\ref{fig:cycles},
\begin{itemize}
\item the Rock-to-Paper cycle corresponding to the cyclic dominance of Rock over Lizard, Lizard over Spock, Spock over Scissors, Scissors over Paper, and finally Paper over Rock;
\item the Rock-to-Spock or Star cycle corresponding to the other cyclic dominance among the actions, namely, Rock over Scissors, Scissors over Lizard, Lizard over Paper, Paper over Spock, and at last Spock over Rock;
\item the RSP cycle corresponding in the above literature to the coexistence of only three of the five species;
\item the Four-node cycle corresponding to the coexistence of four of the five species.
\end{itemize}

The Rock-to-Paper sub-cycle consists of the trajectories (1-dimensional) in the 2-dimensional connections (solid lines in Figure~\ref{fig:cycles}, referred to as of type A in \cite{PosRuc}) that are contained in coordinate planes. In the Star or Rock-to-Spock cycle all connections are 1-dimensional (dashed lines in Figure~\ref{fig:cycles},  referred to as of type B in \cite{PosRuc}). 
The RSP sub-cycle has two trajectories which are part of two connections of dimension~2 and one connection of dimension~1. It corresponds to a sequence AAB in \cite{PosRuc}.
The Four-node sub-cycle comprises one trajectory belonging to one connection of dimension~2 and three connections of dimension~1. It corresponds to a sequence $\text{Q}=\text{ABBB}$ in \cite{PosRuc}.
The RSP and Four-node cycles appear each in five equivalent configurations as follows:
\begin{align*}
& \mbox{\bf Rock} \rightarrow \mbox{\bf Scissors} \rightarrow \mbox{\bf Paper} \rightarrow \mbox{Rock} \\
& \mbox{Paper} \rightarrow \mbox{Rock} \rightarrow \mbox{Lizard} \rightarrow \mbox{Paper} \\
& \mbox{Scissors} \rightarrow \mbox{Paper} \rightarrow \mbox{Spock} \rightarrow \mbox{Scissors} \\
&\mbox{Spock} \rightarrow \mbox{Scissors} \rightarrow \mbox{Lizard} \rightarrow \mbox{Spock} \\
& \mbox{Lizard} \rightarrow \mbox{Spock} \rightarrow \mbox{Rock} \rightarrow \mbox{Lizard}
\intertext{and}
& \mbox{\bf Rock} \rightarrow \mbox{\bf Scissors} \rightarrow \mbox{\bf Paper} \rightarrow \mbox{\bf Lizard} \rightarrow \mbox{Rock} \\
& \mbox{Paper} \rightarrow \mbox{Spock} \rightarrow \mbox{Scissors} \rightarrow \mbox{Lizard} \rightarrow \mbox{Paper} \\
&\mbox{Scissors} \rightarrow \mbox{Lizard} \rightarrow \mbox{Spock} \rightarrow \mbox{Rock} \rightarrow \mbox{Scissors} \\
& \mbox{Spock} \rightarrow \mbox{Rock} \rightarrow \mbox{Lizard} \rightarrow \mbox{Paper} \rightarrow \mbox{Spock} \\
& \mbox{Lizard} \rightarrow \mbox{Paper} \rightarrow \mbox{Rock} \rightarrow \mbox{Scissors} \rightarrow \mbox{Lizard}.
\end{align*}

Although heteroclinic cycles in a network cannot be asymptotically stable, they may exhibit weaker notions of stability such as {\em fragmentary asymptotic stability} (f.a.s.) and {\em essential asymptotic stability} (e.a.s.). See Podvigina \cite{Podvigina2012} and Melbourne \cite{Melbourne1991}, respectively. 
The notion of e.a.s.\ is strong enough to allow e.a.s.\ 
cycles to be visible in simulations. An f.a.s., but not e.a.s., cycle is frequently (but not always) too weak to be spotted in simulations or experiments. However, if the whole network is asymptotically stable it attracts all nearby trajectories. Less stable cycles in an asymptotically stable network may thus become visible.
We put together previously established  and new results concerning stability of networks and cycles (see Podvigina {\em et al.} \cite{PodCasLab2020} and Garrido-da-Silva and Castro \cite{GarCas2019}) to study the stability of the entire network and of the 
four heteroclinic (sub-)cycles listed above. Our results provide a theoretical background for some of the numerical observations in the literature.

By resorting to the representation using Lotka-Volterra systems available in Afraimovich {\em et al.}~\cite{AfrMosYou} our study of the stability of the cycles in the network contributes also to a deeper understanding of the notable results obtained by Postlethwaite and Rucklidge \cite{PosRuc}. We note that, for the parameter values used in \cite{PosRuc}, neither the stability conditions of \cite{AfrMosYou} nor those of \cite{PodCasLab2020} provide a positive result.

We establish a weaker condition than that obtained in \cite{AfrMosYou} that is nevertheless sufficient to ensure the asymptotic stability of the RSPLS network as a whole. This supports the visibility of cycles which are only weakly stable (f.a.s.) in \cite{PosRuc}. We provide a thorough study of the stability of the 
four (sub-)cycles, 
Rock-to-Paper, Star, RSP and Four-node,
in the network as well as conditions for the interested reader to assert the stability of any other cycle. We note that our results extend to models other than the Lotka-Volterra that preserve the invariance of coordinate lines and hyperplanes.

The next section gives a comprehensive overview of the relevant background and establishes the notation. Section~\ref{sec:network} provides a description of the network and clarifies the equivalence between the vector fields used in references  \cite{AfrMosYou} and \cite{PosRuc}.
Sections~\ref{sec:stability} and \ref{sec:stab-cycles} are devoted to the study of stability, the former of the network and the latter of some cycles.
Most calculations are deferred to an appendix.
The last section concludes.

\section{Background and notation}\label{sec:background}

We are interested in a dynamical system described by an ODE
\begin{equation}\label{eq:ODE}
\dot{x} = f(x),
\end{equation}
where $x \in \R^n$ and $f$ is a smooth map from $\R^n$ to itself. 
If there exists a group $\Gamma$ such that 
$$
f(\gamma . x)=\gamma . f(x) \quad \forall x\in \R^n, \;\; \gamma \in \Gamma,
$$
we say that the dynamical system \eqref{eq:ODE} is {\em $\Gamma$-equivariant}.

For each hyperbolic equilibrium $\xi$ of \eqref{eq:ODE} we denote  its stable and unstable manifolds respectively by $W^s(\xi)$ and $W^u(\xi)$.
Following Ashwin {\em et al.} \cite{AshCasLoh}, given two hyperbolic equilibria of \eqref{eq:ODE}, $\xi_i$ and $\xi_j$, we call 
$$
C_{ij} = W^u(\xi_i) \cap W^s(\xi_j),
$$
a {\em connection} from $\xi_i$ to $\xi_j$. We assume that $\xi_i$ and $\xi_j$ are neither the same equilibrium nor symmetry related so that the connection is {\em heteroclinic}. Note that if $\text{dim}(C_{ij})>1$ the connection $C_{ij}$ consists of infinitely many {\em connecting trajectories} $\kappa_{ij}= [\xi_i \rightarrow \xi_j]$, solutions of \eqref{eq:ODE} that converge to $\xi_i$ in backward time and to $\xi_j$ in forward time.

We are concerned with {\em heteroclinic cycles}, that is, with sets which are a finite union of hyperbolic saddles, $\xi_1, \hdots , \xi_m$ such that there exist connections $C_{j,j+1}$ for $j=1, \hdots , m$ with $\xi_{m+1}=\xi_1$. Generically, a connection between two saddles is not robust but when they are contained in flow-invariant spaces where the connection is of saddle-sink type, robustness is the norm. Such flow-invariant spaces appear naturally in equivariant dynamics, in the form of fixed-point spaces, as well as in game theory dynamics, in the form of either coordinate hyperplanes (Lotka-Volterra systems) or hyperfaces of a simplex (replicator dynamics). A connected union of finitely many heteroclinic cycles is a {\em heteroclinic network}.

We focus on the stability of heteroclinic cycles that are part of the same heteroclinic network. It is clear that in a network 
such that the equilibria lie on different axes 
at least one equilibrium has an unstable manifold of dimension at least 2, allowing for connections $C_{ij}$ of dimension at least 2. In such a case, the connection $C_{ij}$ often belongs to a flow-invariant space, $S$, of dimension at least 3 with two connecting trajectories, $\kappa_1$ and $\kappa_2$, in two subspaces $P_1, P_2 \subset S$ of lower dimension. We define two heteroclinic sub-cycles by distinguishing between these two connecting trajectories. Of course, when several connections are of dimension higher than 1, the combination of connecting trajectories into distinct sub-cycles increases in possibilities.

A large class of heteroclinic networks is that of {\em quasi-simple} networks whose stability properties are systematically studied by Garrido-da-Silva and Castro \cite{GarCas2019}. The stability results in \cite{GarCas2019} can be used for any heteroclinic cycle along which the return map has a particular form. Let $P_j$ be a flow-invariant 
sub-space and $\hat{L}_j$ be the vector sub-space 
spanned by $\xi_j$ in $\mathbb{R}^n$:
\begin{definition}\label{def:quasi}
A {\em quasi-simple} cycle is a robust heteroclinic cycle connecting $m<\infty$ equilibria $\xi_j \in P_j \cap P_{j-1}$ so that for all $j =1, \hdots , m$:
\begin{itemize}
 \item[(i)] $P_j$ is a flow-invariant space,
 \item[(ii)] $\text{dim}(P_j)=\text{dim}(P_{j+1})$,
 \item[(iii)] $\text{dim}(P_j \ominus \hat{L}_j)=1$, where $P_j \ominus \hat{L}_j$ is the orthogonal complement to $\hat{L}_j$ in $P_j$.
\end{itemize}
\end{definition}

Not all cycles in the RSLPS network are quasi-simple. In fact, the only quasi-simple cycle is the Star cycle. The Rock-to-Paper cycle does not satisfy (iii) and the remaining cycles do not satisfy (ii) in Definition~\ref{def:quasi}. In Section~\ref{sec:stab-cycles}, we focus on the quasi-simple 
(sub-)cycles whose connections are contained in the flow-invariant coordinate planes.

In a heteroclinic network, the strongest notion of stability we can find
is the one introduced by Melbourne \cite{Melbourne1991} \textit{essential asymptotic stability} (e.a.s.). An e.a.s. object attracts almost all trajectories that start nearby. A weaker notion of attractiveness, referred by Podvigina \cite{Podvigina2012}, is \textit{fragmentary asymptotic stability} (f.a.s.). A f.a.s. object attracts a 
positive measure set nearby, that may be very small. If a heteroclinic cycle is not, at least, 
f.a.s., then it is \textit{completely unstable} (c.u.)
and attracts almost nothing.

To make these concepts rigorous we need some notation. Let $X$ be
a compact set in $\mathbb{R}^n$ invariant under the flow $\Phi_t(x)$ of (\ref{eq:ODE}). Given a metric $d$ on $\mathbb{R}^n$ and $\epsilon>0$, an $\epsilon$-neighbourhood of $X$ is:
\[
B_{\epsilon}(X)=\left\{x\in\mathbb{R}^n: d(x,X)<\epsilon\right\}.
\]
The $\delta$-local basin of attraction of $X$ is:
\[
\mathcal{B}_{\delta}(X)=\left\{x\in\mathbb{R}^n : d(\Phi_t(x),X)<\delta \text{ for any } t\geq 0 \text{ and } \lim_{t \to \infty} d(\Phi_t(x),X)=0\right\}.
\]

\begin{definition}
The compact invariant set $X \subset \R^n$ is:
\begin{itemize}
\item  {\em essentially asymptotically stable} if 
the measure of its $\delta$-local basin of attraction, $\mathcal{B}_{\delta}(X)$, tends to full measure in a 
$\epsilon$-neighbourhood, $B_{\epsilon}(X)$, of $X$ as $\delta$ 
and $\epsilon$ become
small, that is, if
$\lim_{\delta \rightarrow 0} \left[ \lim_{\epsilon \rightarrow 0} \frac{\ell \left(B_{\epsilon}(X)  \cap \mathcal{B}_{\delta}(X) \right) }{\ell \left(B_{\epsilon}(X)\right)} \right] = 1$;

\item {\em fragmentarily asymptotically stable} if 
the measure of its $\delta$-local basin of attraction is positive, that is, if
$\ell\left(\mathcal{B}_{\delta}(X)\right)>0$ for any $\delta>0$;

\item {\em completely unstable} if there exists some $\delta>0$ such that 
the $\delta$-local basin of attraction of $X$ is of measure zero, that is, 
$\ell\left(\mathcal{B}_{\delta}(X)\right)=0$;
 \end{itemize}
 where $\ell(.)$ is the Lebesgue measure on $\R^n$. 
\end{definition}

The notion of local stability index was introduced by Podvigina and Ashwin \cite{PodviginaAshwin2011} to quantify the local extent of basins of attraction.\footnote{We ignore the subscript ``loc'' used in \cite{PodviginaAshwin2011} to distinguish between ``stability index'' and ``local stability index'' since we do not use the former.} 
Given $x \in X$, small $\delta >0$ and $\epsilon>0$, define
the relative size of the $\delta$-local basin of attraction in an $\epsilon$-neighbourhood of 
$x$ as
\[
    \Sigma_{\epsilon,\delta}(x)=\frac{\ell(B_{\epsilon}(x)\cap \mathcal{B}_{\delta}(X))}{\ell(B_{\epsilon}(x))}.
\]

\begin{definition}
For a point $x \in X$ the {\em local stability index} of $X$ at $x$ is
 \[
\sigma(x)=\sigma_+(x)-\sigma_-(x)
\]
\begin{equation*}
\text{where } \quad \sigma_+(x)=\lim_{\delta \to 0}\lim_{\epsilon \to 0} \Bigg[\frac{\ln(1-\Sigma_{\epsilon,\delta}(x))}{\ln(\epsilon)}\Bigg] \quad \text{ and } \quad \sigma_-(x)=\lim_{\delta \to 0}\lim_{\epsilon \to 0} \Bigg[\frac{\ln(\Sigma_{\epsilon,\delta}(x))}{\ln(\epsilon)}\Bigg] \qquad
\end{equation*}
\end{definition}
We use the convention that $\sigma_-(x)=\infty$ when
 $\Sigma_{\epsilon,\delta}=0$ for some $\epsilon>0$, $\delta>0$. Analogously, $\sigma_+(x)=\infty$ if there is an $\epsilon>0$ such that $\Sigma_{\epsilon,\delta}=1$. Note that $\sigma_{\pm}(x) \geq 0$, so we can assume that $\sigma(x) \in [-\infty,\infty]$; 
 the strongest form of local stability corresponds to
 $\sigma(x)=\infty$ 
 while $\sigma(x)=-\infty$ is the weakest.

A positive stability index indicates that $X$ attracts all points in the thick side of a cusp in its neighbourhood. If the stability index is negative, only points in the thin side of the cusp are attracted to $X$. See Figure~\ref{fig:stab_index}.
\begin{figure}[!hht]
\begin{center}
\parbox{60mm}{
\begin{center}
\includegraphics[width=50mm]{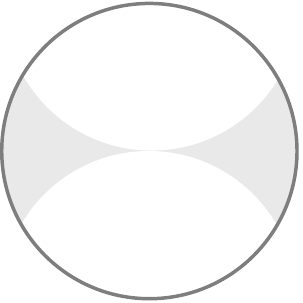}\\
 $\sigma(x)<0$
\end{center}
}
\quad
\parbox{60mm}{
\begin{center}
\includegraphics[width=50mm]{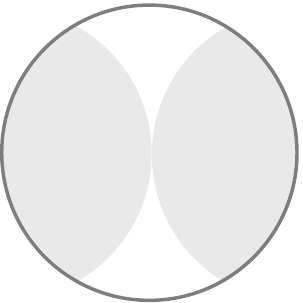}\\
$\sigma(x)>0$
\end{center}
} \quad \tikz{\fill[gray!25] (0,0) rectangle (0.8,0.4);
\node[right] at (0.9,0.2) {\small $\mathcal{B}_\delta(X)$};}
\end{center}
\caption{A negative stability index (left) indicates that the set of points in $\mathcal{\B}_\delta(X)$
are those in the thin (shaded) side of a cusp. A positive stability index (right) corresponds to 
$\mathcal{B}_\delta(X)$ being in the thick (shaded) side of a cusp.
}\label{fig:stab_index}
\end{figure}

\section{The RSPLS network}\label{sec:network}

Using the notation of \cite{AfrMosYou}, a dynamical system describing a Lotka-Volterra system is one where the ODE in \eqref{eq:ODE} takes the form\footnote{In  \cite{AfrMosYou} 
the notation is  $\sigma_i$ instead of $\tau_i$. We make this change to avoid confusion with the stability indices.} (see Equation (1) in \cite{AfrMosYou})
\begin{equation}\label{eq:RSPLS}
\dot{x}_i = x_i \left(\tau_i - \sum_{j=1}^n\; \rho_{ij}x_j \right)  \;\; \mbox{ for  } i=1,\hdots, n.
\end{equation}
All the parameters $\tau_i$ and $\rho_{ij}$ are positive and $\rho_{ii}=1$. 
To ensure biological meaning, the state space is $\R^n_+$, the subspace of $\R^n$ where all coordinates are non-negative.

For RSPLS, it is $n=5$.
In \cite{PosRuc} the dynamics of the game of RSPLS is
described by looking at a particular case of \eqref{eq:RSPLS}, namely, $\tau_j=1$ for all $j$ and 
\begin{equation}\label{eq:rho}
\rho_{j,j+1} = 1+c_A ,\
\rho_{j,j+2} = 1-e_B, \
\rho_{j,j+3} = 1+c_B, \
\rho_{j,j+4}= 1-e_A, \; (\mbox{mod }5).
\end{equation}
The dynamics of \eqref{eq:RSPLS} supports
a heteroclinic network with connections of dimension 1 and 2 between saddles. All the saddles are located on the coordinate axes and have 2-dimensional
 unstable manifolds.
  We use $O_j$ to denote equilibria when referring to the more general dynamics of \eqref{eq:RSPLS} and $\xi_j$ otherwise.
Each equilibrium $O_j$ is located at a point where only the $j^\text{th}$ coordinate is non-zero and equal to $\tau_j$. In the context of the RSPLS game, it is natural to set $\tau_j=1$ since this equilibrium represents the availability of only type $j$. 

The Jacobian matrix of the system~\eqref{eq:RSPLS} evaluated at each equilibrium $O_k$ is upper triangular with eigenvalues given by (see \cite{AfrMosYou}, Section 2)
$$
-\tau_k \;\; \mbox{ and  } \; \tau_j-\rho_{jk}\tau_k  , \;\; j \neq k.
$$
The first eigenvalue is radial and negative. In order to guarantee existence of the heteroclinic network the following assumptions are made (see Equations (3) and (4) in \cite{AfrMosYou})
\begin{equation}\label{eq:3}
\min_{i=1,2} \{ \tau_{k+i}-\rho_{k+i,k}\tau_k \} > 0
\end{equation}
and 
\begin{equation}\label{eq:4}
\tau_j-\rho_{jk}\tau_k < 0, \;\; \mbox{  for    } \; j\neq k,k+1,k+2,
\end{equation}
where all indices are~$\pmod n$. We note that in \cite{PosRuc} the first assumption holds for $i=1,3$ so that an equilibrium $\xi_k$ has connections to $\xi_{k+1}$ and $\xi_{k+3}$. The heteroclinic networks are equivalent under the following correspondence: $O_1 \equiv \xi_1$, $O_2 \equiv \xi_4$, $O_3 \equiv \xi_2$, $O_4 \equiv \xi_5$ and $O_5 \equiv \xi_3$. 

The RSPLS network is represented by the graphs
depicted in Figure~\ref{fig:network}. This is equivalent to Figure~1 in \cite{AfrMosYou} and \cite{PosRuc}, and appears in Figure~13 of \cite{PodCasLab2020}. 
Each node of the graph corresponds to an equilibrium of \eqref{eq:RSPLS} where only one type is present.  
On the right-hand side, the nodes $\xi_1, \hdots , \xi_5$ are ordered so that they correspond to the sequence Rock, Scissors, Paper, Lizard, Spock. 
On the left, they have the ordering used in  \cite{AfrMosYou}.

On the right-hand side of Figure~\ref{fig:network}, the sequence of connections $C_{j,j+1}$, (mod 5) $j=1,\hdots ,5$, together with the ordered equilibria 
constitute a heteroclinic cycle
with 2-dimensional connections. On the left, this same heteroclinic cycle is made of the sequence of connections
$C_{j,j+2}$, (mod 5) $j=1,\hdots ,5$, and the sequence of nodes $O_j$, $O_{j+2}$. We refer to this as the {\em Rock-to-Paper cycle}.

Another heteroclinic cycle consists of all the nodes (in suitable order) and the sequence of 1-dimensional connections $C_{j,j+3}$, (mod 5) $j=1,\hdots ,5$ on the right-hand side; $C_{j,j+1}$, (mod 5) $j=1,\hdots ,5$ on the left. We call this Rock-to-Spock or the {\em Star cycle}  due to its shape in the graph 
of Figure~\ref{fig:network} (right).

We point out that the Rock-Scissors-Paper game appears as the heteroclinic cycle with three consecutive nodes and the connections $C_{j,j+1}$, $C_{j+1,j+2}$, $C_{j+2,j}$
on the right-hand side; this cycle has nodes $O_j$, $O_{j+2}$, $O_{j+4}$ and the connections between each two on the left. This is the {\em RSP cycle} in what follows.

Finally, heteroclinic cycles with four nodes exist. They are described by sequences of nodes $O_j$, $O_{j+2}$, $O_{j+3}$, $O_{j+4}$ on the right-hand side; and by sequences of nodes $\xi_j$, $\xi_{j+1}$, $\xi_{j+4}$, $\xi_{j+2}$ on the left. 

 \begin{figure}[!hht]
 \centerline{\includegraphics[width=0.8\textwidth]{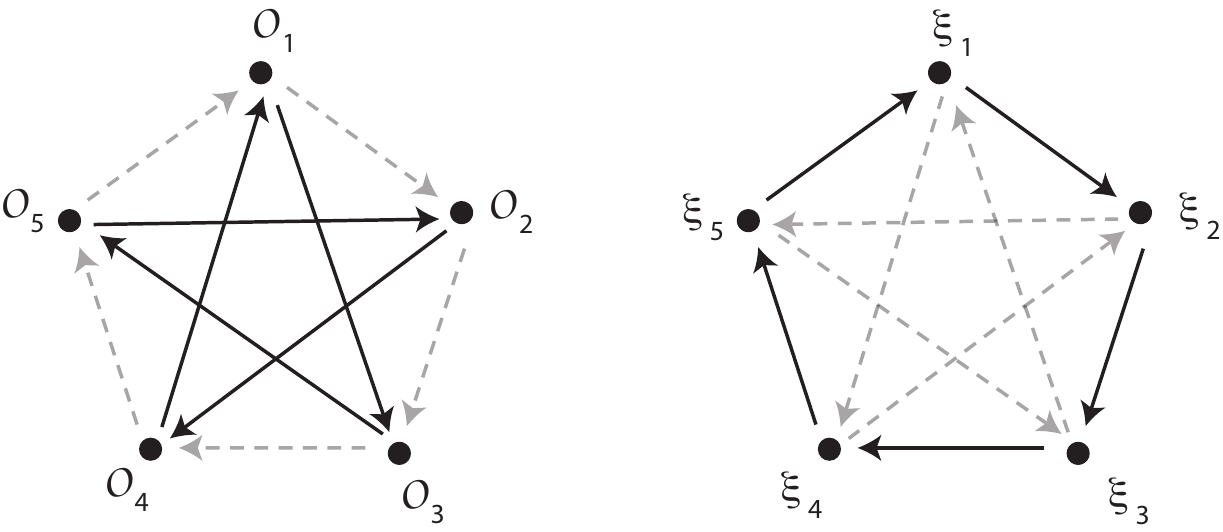}}
 \caption{\small{The RSPLS network: on the left with the labelling of \cite{AfrMosYou} and on the right with that of \cite{PosRuc}. On the left, the 2-dimensional connections are those shown as a star in the innermost part of the graph (solid lines); the connections on the outermost part, sequentially connecting $O_1, \hdots , O_5$ are all 1-dimensional (dashed lines). On the right, it is the connections on the outside of the graph (solid), connecting in sequence $\xi_1, \hdots , \xi_5$, that are 2-dimensional.} \label{fig:network}}
 \end{figure}
 
We refer to the 
four cycles described above as the {\em elementary heteroclinic cycles}.
Many heteroclinic cycles are available as combinations of these 
four types if we allow for repetition of one or more nodes. 
For instance, we may have the sequence $\xi_1\to\xi_2\to\xi_3\to\xi_4\to\xi_5\to\xi_1\to\xi_2\to\xi_3\to\xi_1$, or the sequence $\xi_1\to\xi_2\to\xi_3\to\xi_4\to\xi_5\to\xi_3\to\xi_1$, among many other.
 
 The connections among $\xi_j$, $\xi_{j+1}$ and $\xi_{j+3}$ on the right-hand side of Figure~\ref{fig:network}, namely, $C_{j,j+1}$, $C_{j+3,j+1}$, and $C_{j,j+3}$ form what Ashwin {\em et al.} \cite{AshCasLoh} call a $\Delta$-{\em clique}, as shown in Figure~\ref{fig:deltaClique}. In \cite[Definition 2.1]{PodCasLab2020}, the term $\Delta$-clique is reserved for such pieces of graph so that all trajectories starting near $\xi_j$ end at $\xi_{j+1}$. The connection $C_{j,j+1}$ is called the short-connection while $C_{j+3,j+1}$, and $C_{j,j+3}$ are the second-long and the first-long connections, respectively.
 The short connection is 2-dimensional.
 
On the left-hand side of Figure~\ref{fig:network}, the $\Delta$-cliques appear associated to the connections $C_{j,j+1}$, $C_{j+1,j+2}$, and $C_{j,j+2}$, this last being the short-connection,
 see also Figure~\ref{fig:deltaClique}.

\begin{figure}[hhh]
\centerline{\includegraphics[width=0.4\textwidth]{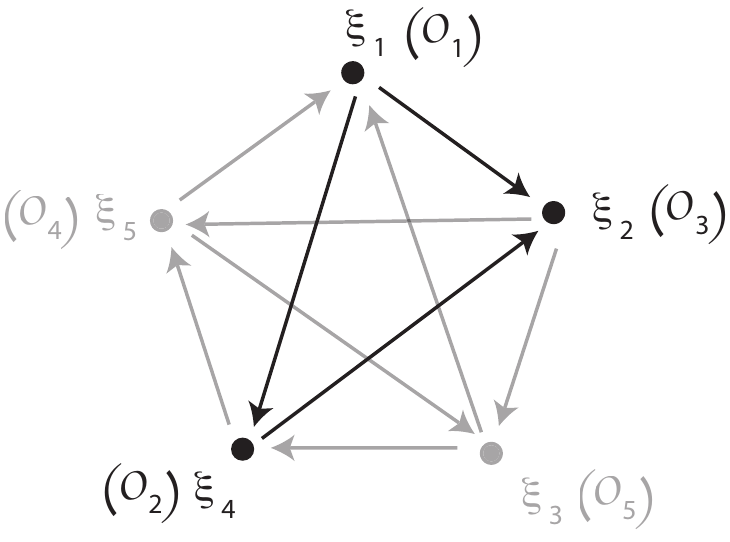}}
\caption{One of the $\Delta$-cliques of the RSPLS.}\label{fig:deltaClique}
\end{figure}

\section{Stability of the network}\label{sec:stability}

In this section we keep to the notation of  \cite{AfrMosYou} and 
 show that, for most 
 parameter values in \cite{PosRuc} the RSPLS network is asymptotically stable.
 We start by finding a set that attracts all trajectories that do not start at the origin.
 
 \begin{lemma}\label{lem:InvariantSphere}
 If $\tau_j>0$,  $j=1,\ldots,n$, then  \eqref{eq:RSPLS}  admits a flow-invariant globally attracting $(n-1)$-sphere.
 \end{lemma}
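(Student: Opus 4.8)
The plan is to reduce the statement to a dissipativity estimate together with an identification of the global attractor with a radial graph over a sphere. The natural device is the coordinatewise substitution $x_i=u_i^2$, which turns \eqref{eq:RSPLS} into the polynomial system $\dot u_i=\tfrac{1}{2}u_i\bigl(\tau_i-\sum_j\rho_{ij}u_j^2\bigr)$ on all of $\R^n$. This system is smooth, leaves every coordinate hyperplane invariant, is equivariant under each reflection $u_i\mapsto -u_i$ (so the symmetry group is $\Z_2^n$), and its restriction to the positive orthant is conjugate to \eqref{eq:RSPLS} on $\R^n_+$ via $u\mapsto(u_1^2,\ldots,u_n^2)$. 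The advantage of the $u$ variables is that the dynamics is now dissipative in \emph{every} orthant, not only the positive one, which is precisely what is needed to close up a disk into a sphere; the image of that sphere under $x_i=u_i^2$ is the attracting hypersurface of \eqref{eq:RSPLS} in $\R^n_+$.

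First I would establish the two soft facts. At the origin the linearisation is $\mathrm{diag}(\tau_1/2,\ldots,\tau_n/2)$ with all eigenvalues positive, so the origin is a hyperbolic repeller. For dissipativity I would differentiate $|u|^2$ along the flow,
\[
\frac{d}{dt}|u|^2=\sum_i\tau_i u_i^2-\sum_{i,j}\rho_{ij}u_i^2u_j^2\le \tau_{\max}|u|^2-\frac{1}{n}|u|^4,
\]
using $\rho_{ii}=1$, $\rho_{ij}>0$ and $\sum_i u_i^4\ge\frac{1}{n}|u|^4$. Hence for $|u|^2$ large the right-hand side is negative, every sufficiently large ball is absorbing, and together with the repelling origin this yields a compact invariant global attractor $\mathcal{A}$ that is bounded away from $0$ and attracts every trajectory not starting at the origin.

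The heart of the proof is to identify $\mathcal{A}$ with a topological $(n-1)$-sphere rather than with an unspecified compact set. Inside the positive orthant the transformed system is again a competitive Lotka--Volterra system, since $\partial\dot u_i/\partial u_j=-\rho_{ij}u_iu_j<0$ there for $i\neq j$; it therefore carries a Hirsch carrying simplex, an invariant Lipschitz hypersurface that is a radial graph over the spherical simplex $\{\theta\in S^{n-1}:\theta_i\ge 0\}$ and attracts every nontrivial orbit of that orthant. Equivalently, one shows directly that each ray from the origin meets $\mathcal{A}$ in exactly one point, so $\mathcal{A}\cap\R^n_+$ is homeomorphic to a closed $(n-1)$-disk. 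Applying the $\Z_2^n$ symmetry reflects this disk into all $2^n$ orthants, and the pieces match along the coordinate hyperplanes, where each restricts to the carrying simplex of the corresponding lower-dimensional subsystem. As the closed spherical simplices tile $S^{n-1}$, the union is a radial graph over the whole unit sphere, and radial projection $u\mapsto u/|u|$ exhibits $\mathcal{A}$ as a topological $(n-1)$-sphere, invariant and globally attracting.

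I expect the genuine obstacle to be exactly this last identification: the repeller and dissipativity statements are routine estimates, but proving that the attractor is precisely a radial graph — each ray crossing it once, with the crossing varying continuously in the direction — is where the competitive/monotone structure must be exploited, and it is also what guarantees that the reflected pieces glue into a manifold rather than into a set with self-intersections. A careful version would either invoke the carrying-simplex theorem for competitive systems or reprove the radial-graph property by a direct comparison argument using $\rho_{ij}>0$.
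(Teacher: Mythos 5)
Your proof is correct in outline but takes a genuinely different route from the paper. Both arguments begin with the same squaring substitution $x_i=u_i^2$, producing the $\Z_2^n$-equivariant cubic system \eqref{eq:RSPLSac}; from there the paper finishes in one step by observing that the transformed vector field is linear plus a \emph{contracting homogeneous cubic} and invoking Field's Invariant Sphere Theorem \cite{Field1989}, which directly delivers the attracting invariant topological $(n-1)$-sphere. You instead assemble the sphere by hand: a dissipativity estimate on $|u|^2$, hyperbolic repulsion at the origin, Hirsch's carrying-simplex theorem to identify the attractor in the closed positive orthant as a radial graph over the spherical simplex, and equivariant gluing of the $2^n$ reflected copies (matching along coordinate subspaces, where each copy restricts to the carrying simplex of the corresponding subsystem) into a radial graph over all of $S^{n-1}$. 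What the paper's route buys is brevity, at the price of needing the transformed field to be exactly in the normal form required by the Invariant Sphere Theorem (homogeneous cubic nonlinearity); what your route buys is independence from that normal form and an argument native to the competitive Lotka--Volterra structure, at the price of importing Hirsch's theorem and of one technical wrinkle you should repair: strict total competitiveness $\partial \dot u_i/\partial u_j<0$ ($i\neq j$) fails for the $u$-system on the coordinate hyperplanes, so you should either apply the carrying-simplex theorem to the original system \eqref{eq:RSPLS} in the $x$-variables (where $\partial f_i/\partial x_j=-\rho_{ij}<0$ holds everywhere) and pull the simplex back through the squaring map, or cite a version of the theorem tolerating this boundary degeneracy. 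With that adjustment, and the continuity-of-the-radial-graph argument across orthant boundaries made explicit via the pasting lemma, your proof is complete and reaches the same conclusion as the paper's.
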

 \begin{proof}
 We transform the ODE \eqref{eq:RSPLS} by changing coordinates as $x_i=X_i^2$. We obtain
\begin{equation}\label{eq:RSPLSac}
\dot X_{i}=\frac{X_i}{2} \left(\tau_i - \sum_{j=1}^n\; \rho_{ij}X_j^{2} \right)  \;\; \mbox{ for  } i=1,\hdots, n.
\end{equation}
The equilibria in the network remain on the coordinate axes but the non-zero coordinate is now represented by $\sqrt{\tau_i}$. At each equilibrium $O_j$, the Jacobian matrix is diagonal. The radial eigenvalues  are preserved and the remaining eigenvalues appear divided by 2. They thus satisfy the assumptions in \eqref{eq:3} and \eqref{eq:4}. 

The nonlinear part of \eqref{eq:RSPLS} is contracting and homogeneous of degree 3. Therefore the Invariant Sphere Theorem of Field \cite{Field1989} holds, ensuring the existence of an attracting invariant $(n-1)$-sphere. 
 \end{proof}

It follows from Lemma~\ref{lem:InvariantSphere} that
the radial eigenvalue is negative, since the invariant sphere is attracting. Then,
at each equilibrium the radial eigenvalue does not have to be taken into account
for the stability of the RSPLS network.

Theorem 2.3 in \cite{AfrMosYou} provides sufficient conditions for the asymptotic stability of the RSPLS network. Other than \eqref{eq:3} and \eqref{eq:4}, for each $k$, these are that
\begin{equation}\label{eq:delta}
\frac{\tau_{k+1}}{\rho_{k+1,k}} \leq \frac{\tau_{k+2}}{\rho_{k+2,k}} 
\end{equation}
and
\begin{equation}\label{eq:5}
\max_{i=1,2} \{ \tau_{k+i}-\rho_{k+i,k}\tau_k\} < \min_{j\neq k,k+1,k+2} \{ |\tau_j-\rho_{jk}\tau_k|,\tau_k\}.
\end{equation}
It is a straightforward consequence of
Lemma~\ref{lem:InvariantSphere}
 that condition \eqref{eq:5} can be simplified to
\begin{equation}\label{eq:5a}
\max_{i=1,2} \{ \tau_{k+i}-\rho_{k+i,k}\tau_k\} < \min_{j\neq k,k+1,k+2} \{ |\tau_j-\rho_{jk}\tau_k|\},
\end{equation}
since the radial eigenvalue ceases to play a role.

Conditions for the asymptotic stability of the RSPLS network are the focus of the next result, with the aim of covering the cases treated in \cite{PosRuc} where $e_A=1$. 
 From the previous correspondence \eqref{eq:rho}, we see that $\rho_{j,j+4}=1-e_A=0$ when $e_A=1$ and thus, does not satisfy the restriction imposed in  \cite{AfrMosYou} that $\rho_{jk}>0$. Furthermore, the hypotheses in Theorem 2.3 of  \cite{AfrMosYou} have to be adapted so that the outgoing connections at each node $\xi_j$ are to $\xi_{j+1}$ and $\xi_{j+3}$. Thus conditions \eqref{eq:3}, \eqref{eq:4}, \eqref{eq:delta} and \eqref{eq:5a} become, respectively,
 \begin{equation}\label{eq:3b}
\min_{i=1,3} \{ \tau_{k+i}-\rho_{k+i,k}\tau_k \} > 0,
\end{equation}
\begin{equation}\label{eq:4b}
\tau_j-\rho_{jk}\tau_k < 0, \;\; \mbox{  for    } \; j\neq k,k+1,k+3,
\end{equation}
 \begin{equation}\label{eq:delta_b}
\frac{\tau_{k+3}}{\rho_{k+3,k}} \leq \frac{\tau_{k+1}}{\rho_{k+1,k}}
\end{equation}
and 
\begin{equation}\label{eq:5b}
\max_{i=1,3} \{ \tau_{k+i}-\rho_{k+i,k}\tau_k\} < \min_{j\neq k,k+1,k+3} \{ |\tau_j-\rho_{jk}\tau_k|\}.
\end{equation}

\begin{proposition}\label{prop:stability}
If $0<e_B< e_A<\min\{c_A,c_B\}$ and $e_A\le1$, then the RSPLS network in \cite{PosRuc} is asymptotically stable.
\end{proposition}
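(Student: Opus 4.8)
The plan is to show that the hypotheses $0<e_B<e_A<\min\{c_A,c_B\}$ and $e_A\le 1$ are exactly what is needed to verify the four adapted conditions \eqref{eq:3b}, \eqref{eq:4b}, \eqref{eq:delta_b} and \eqref{eq:5b}, and then to invoke the adapted Theorem~2.3 of \cite{AfrMosYou}. By Lemma~\ref{lem:InvariantSphere} there is a globally attracting invariant sphere, so the radial eigenvalue $-\tau_k$ is negative and may be discarded; only the transverse eigenvalues at each $\xi_k$ enter the stability conditions.

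The first step is to record those transverse eigenvalues. Since $\tau_j=1$ for all $j$, the eigenvalue of the Jacobian at $\xi_k$ in the $j$-th direction is $1-\rho_{jk}$. Reading off the values in \eqref{eq:rho} for $j=k+1,k+2,k+3,k+4 \pmod 5$ gives, respectively, $e_A$ (the unstable direction towards $\xi_{k+1}$), $-c_B$, $e_B$ (the unstable direction towards $\xi_{k+3}$) and $-c_A$. Thus at every node there are exactly two expanding eigenvalues $e_A,e_B$ and two contracting eigenvalues $-c_A,-c_B$, independently of $k$, which already matches the requirement that the outgoing connections be to $\xi_{k+1}$ and $\xi_{k+3}$.

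With these values the four conditions become transparent. Condition \eqref{eq:3b} reads $\min\{e_A,e_B\}=e_B>0$; condition \eqref{eq:4b} reads $-c_B<0$ and $-c_A<0$; condition \eqref{eq:5b} reads $\max\{e_A,e_B\}=e_A<\min\{c_A,c_B\}=\min\{|-c_A|,|-c_B|\}$; and condition \eqref{eq:delta_b} reads $\frac{1}{1-e_B}\le\frac{1}{1-e_A}$, which for $e_A<1$ is equivalent to $e_B\le e_A$ because $0<1-e_A<1-e_B$. Each clause is one half of the hypothesis, so all four conditions hold and asymptotic stability of the network follows from \cite{AfrMosYou}.

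I expect the main obstacle to be the boundary case $e_A=1$, where $\rho_{k+1,k}=1-e_A=0$ and the positivity restriction $\rho_{jk}>0$ of \cite{AfrMosYou} fails, so Theorem~2.3 cannot be quoted verbatim. I would handle this by noting that the transverse eigenvalue towards $\xi_{k+1}$ is still $1-\rho_{k+1,k}=e_A=1>0$, so the connection $C_{k,k+1}$ and the whole network persist, and that the stability criterion of \cite{AfrMosYou} is phrased purely in terms of this (regular) eigenvalue data rather than in terms of $\rho_{k+1,k}$ being strictly positive; consistently, the right-hand side of \eqref{eq:delta_b} tends to $+\infty$, so that condition holds in the limit. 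A clean alternative is a continuity argument: the strict inequalities in \eqref{eq:5b} persist on a neighbourhood of $e_A=1$, and asymptotic stability, being governed by the transition data along the cycle that varies continuously with the eigenvalues, carries over from $e_A<1$ to $e_A=1$. The only genuinely non-routine point is confirming that the degeneracy $\rho_{k+1,k}=0$ does not alter the transition-map analysis underlying the criterion.
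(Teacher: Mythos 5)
For $e_A<1$ your argument coincides with the paper's: conditions \eqref{eq:3b} and \eqref{eq:4b} are immediate, \eqref{eq:5b} is exactly $\max\{e_A,e_B\}<\min\{c_A,c_B\}$, and \eqref{eq:delta_b} reduces to $\tfrac{1}{1-e_B}\le\tfrac{1}{1-e_A}$, which follows from $0<e_B<e_A<1$; the adapted Theorem~2.3 of \cite{AfrMosYou} then applies. That part is fine.

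The genuine gap is the case $e_A=1$, which you correctly flag but do not resolve. Neither of your two suggested fixes works. First, condition \eqref{eq:delta_b} is not ``phrased purely in terms of eigenvalue data'': it is a condition on the coefficients $\rho_{jk}$ whose role in \cite{AfrMosYou} is geometric --- it guarantees that the nullcline planes bounding the relevant three-dimensional face do not intersect in its interior, so that the two-dimensional unstable manifold of each node stays inside the network. Declaring that the right-hand side ``tends to $+\infty$'' does not certify that this geometric conclusion survives when $\rho_{k+1,k}=0$; it has to be re-proved. Second, the continuity argument is invalid: asymptotic stability of a heteroclinic network is not a property that passes to limits of parameter values, and the validity of a sufficient condition for all $e_A<1$ says nothing about the boundary value $e_A=1$. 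The paper closes this gap with a concrete argument: for each $\Delta$-clique (e.g.\ the one on $\xi_1,\xi_2,\xi_4$) it writes down the three nullcline planes $P_1,P_2,P_4$, checks from their intercepts that $P_2$ dominates the other two when $e_A=1$ (using $1/(1+c_A),1/(1+c_B)<1<1/(1-e_B)$), concludes there is no equilibrium in the open face $x_1,x_2,x_4>0$, $x_3=x_5=0$, and then invokes the Poincar\'e--Bendixson theorem on the compact invariant $2$-sphere of Lemma~\ref{lem:InvariantSphere} to exclude periodic orbits, so the unstable manifold of $\xi_1$ is contained in the network. This is precisely the ``non-routine point'' you leave unconfirmed, and without it the proposition is only proved for $e_A<1$.
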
 

\begin{proof}
We follow the ideas of the proof of  Theorem 2.3 of  \cite{AfrMosYou} and relax their parameter space to obtain the same result when $e_A=1$. The dynamical system describing the RSPLS game in \cite{PosRuc} is 
\begin{equation}\label{eq:Postleth}
\dot x_{i}=x_i
\Big[1-\Big(x_i+(1 +c_A)x_{i+1}+ (1 -e_B)x_{i+2}+(1 +c_B)x_{i+3}+(1 -e_A)x_{i+4} \Big)   \Big]  .
\end{equation}
As previously stated, $\tau_i=1$ and the remaining coefficients are given in \eqref{eq:rho}. Conditions \eqref{eq:3b} and \eqref{eq:4b} are trivially satisfied. Condition \eqref{eq:5b} leads to $\max \{e_A,e_B\} < \min \{c_A,c_B\}$, implied by our hypothesis. 
Condition \eqref{eq:delta_b} reads as $\dfrac{1}{1-e_B}\le \dfrac{1}{1-e_A}$.
If  $e_A\ne 1\ne e_B$, this is implied by $0<e_B< e_A$. It follows from direct application of the sequence of Lemmas 3.2--3.7 in \cite{AfrMosYou} that the hypotheses of their Theorem 2.3 hold and the network is asymptotically stable.

When $e_A=1$, condition \eqref{eq:delta_b} cannot be verified. We prove that, in this case, the unstable manifold of each equilibrium in contained in the heteroclinic network. We consider the $\Delta$-clique defined by the equilibria $\xi_1$, $\xi_2$ and $\xi_4$ and show that the 2-dimensional unstable manifold of $\xi_1$ is contained in the $\Delta$-clique, which in fact is a $\Delta$-clique in the more restrictive sense of \cite{PodCasLab2020}. We show that there are no equilibria in the portion of state space defined by $x_i > 0$, $i=1,2,4$ and $x_3=x_5=0$.
Such equilibria, if they exist, are in the intersection of the following three planes
\begin{eqnarray*}
P_1 & = & \{ 1- x_1 -(1+c_A)x_2-(1+c_B)x_4 =0 \} \\
P_2 & = & \{ 1- (1-e_A)x_1-x_2 - (1-e_B)x_4 =0 \} \\
P_4& = & \{ 1- (1-e_B)x_1-(1+c_B)x_2-x_4 =0 \} 
\end{eqnarray*}
To see that the planes $P_1$, $P_2$ and $P_4$ do not intersect in the interior of the $\Delta$-clique we show that $P_2$ {\em dominates} the other two planes. We say, as in \cite{AfrMosYou}, that the plane $P_2$ dominates $P_1$ if, when representing each plane by the graph of a function $x_2=z_2(x_1,x_4)$ and $x_1=z_1(x_2,x_4)$, the graph representing $P_2$ is always above that representing $P_1$. Analogously, for the statement that $P_2$ dominates $P_4$.
The intersections of  the planes with the coordinate axes are as follows, when $e_A=1$:
\begin{center}
\begin{tabular}{c|lll}
plane$\backslash$axis	&	$x_1$	&	$x_2$	&	$x_4$	\\ \hline
$P_1$	&	$x_1=1$	&	$x_2=1/(1+c_A)$\quad	&	$x_4=1/(1+c_B)$	\\
$P_2$	&	$\varnothing$\quad	&	$x_2=1$	&	$x_4=1/(1-e_B)$	\quad	\\
$P_4$	&	$x_1=1/(1-e_B)$	&	$x_2=1/(1+c_B)$	&	$x_4=1$
\end{tabular}
\end{center}
Since $1/(1+c_A), 1/(1+c_B) <1$ and $1/(1-e_B)>1$, it is easy to see that the intersection of $P_2$ with the axes $x_2$ and $x_4$ is larger than those of either $P_1$ or $P_4$.
Hence, $P_2$ is always above the other two planes for $x_i > 0$, $i=1,2,4$ and $x_3=x_5=0$. 
The proof that the $\Delta$-clique exists follows analogously to the case $e_A \neq 1$. Note that we are working in an attracting invariant topological sphere in three-dimensional space so that trajectories do not go to infinity. Since the invariant sphere is compact and 2-dimensional, Poincar\'e-Bendixson requires an equilibrium for the existence of a periodic orbit. Since there are no equilibria, no period orbits exist.

The proof for the remaining $\Delta$-cliques in the network is done by permutation of the indices.
\end{proof}

The next result establishes the asymptotic stability of the RSPLS network for most values in Figure~7 of \cite{PosRuc} that correspond to the existence of `sausages'\footnote{The term `sausage' has been used by the authors of \cite{PosRuc} to describe small intertwined regions in parameter space with different dynamics and depicted in their Figures~2, 7 and 8.}. These correspond to fragmentary asymptotic stability regions for various sequences other than the Rock-to-Spock, the Star and the RSP cycles referred to above. 
Establishing the asymptotic stability of the whole network supports the visibility of the sausages of \cite{PosRuc}.
Note that for some portion of the region depicted in \cite{PosRuc}, namely $c_A \in (0.8,1]$, the sufficient conditions for asymptotic stability of the network given in \cite{AfrMosYou} do not apply.

\begin{corollary}\label{cor:as-stab}
The RSPLS network in \cite{PosRuc} is asymptotically stable if
$e_A=1$, $e_B=0.8$, $c_A \in (1.0,1.8)$
and $c_B \in (1,4.5)$.
\end{corollary}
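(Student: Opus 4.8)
The plan is to read the statement as a direct specialization of Proposition~\ref{prop:stability} and simply to check that the prescribed numerical window lies inside the hypotheses of that proposition. First I would verify the chain $0<e_B<e_A$: with $e_B=0.8$ and $e_A=1$ this reads $0<0.8<1$, which holds. Next I would confirm $e_A<\min\{c_A,c_B\}$. Since $e_A=1$ and the intervals $c_A\in(1.0,1.8)$ and $c_B\in(1,4.5)$ are open at their lower endpoints, both parameters satisfy $c_A>1$ and $c_B>1$ strictly, whence $\min\{c_A,c_B\}>1=e_A$. Finally the remaining hypothesis $e_A\le1$ holds with equality. Having placed the parameters inside the admissible set, I would invoke Proposition~\ref{prop:stability} to conclude the asymptotic stability of the network throughout the stated ranges.

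The single point that needs care is the strictness of $e_A<\min\{c_A,c_B\}$ at the lower ends of the $c_A$- and $c_B$-intervals, and this is precisely where the restriction $c_A>1$ enters. For $c_A\le1$ one would have $\min\{c_A,c_B\}\le e_A$ and the hypothesis of Proposition~\ref{prop:stability} would break down; this is why the corollary excludes the portion $c_A\in(0.8,1]$ singled out in the surrounding text, where the criteria of \cite{AfrMosYou} also fail. The upper bounds $c_A<1.8$ and $c_B<4.5$, on the other hand, are irrelevant to the argument, since Proposition~\ref{prop:stability} imposes no upper bound on $c_A$ or $c_B$; they are kept only to match the window of Figure~7 in \cite{PosRuc} in which the `sausages' are observed.

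I do not expect a genuine obstacle here. The delicate $e_A=1$ situation---where condition \eqref{eq:delta_b} cannot hold and one must instead show directly that each unstable manifold stays inside the network---is already dispatched inside the proof of Proposition~\ref{prop:stability}, via the domination of the planes $P_1,P_2,P_4$ together with the Poincar\'e--Bendixson argument on the attracting invariant sphere of Lemma~\ref{lem:InvariantSphere}. Consequently the corollary demands no new dynamical input beyond the elementary inclusion check above.
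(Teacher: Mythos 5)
Your proof is correct and takes essentially the same route as the paper, which gives no separate proof of the corollary precisely because it is an immediate specialization of Proposition~\ref{prop:stability}: with $e_A=1$, $e_B=0.8$ and $c_A,c_B>1$ one has $0<e_B<e_A\le 1$ and $e_A<\min\{c_A,c_B\}$, so the proposition applies. Your observations that the strict lower bound $c_A>1$ is what makes $e_A<\min\{c_A,c_B\}$ hold, and that the upper bounds merely track the window of Figure~7 of \cite{PosRuc}, are both accurate.
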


\section{Stability of the four elementary cycles}\label{sec:stab-cycles}

In this section we present the stability indices for the sub-cycles of 1-dimensional heteroclinic connections of the 
four cycles: Rock-to-Paper, Star, RSP and Four-node.
From now on we remain with the formulation of  \cite{PosRuc} given in \eqref{eq:Postleth}.
Recall the relation between our cycles and those of \cite{PosRuc}: our Rock-to-Paper cycle is of type A, our Star cycle is of type B, 
our RSP cycle is of type AAB, and our Four-node cycle is of type $\text{Q}=\text{ABBB}$.

\subsection{Previous results} 
%on the stability of elementary cycles}
As is shown in~\cite{GarCas2019} the stability indices can be calculated for the general class of quasi-simple cycles.
It is easily seen that the 
four cycles of interest are either quasi-simple or have quasi-simple sub-cycles when restricted to the flow-invariant coordinate planes. 
This restriction ensures that Definition~\ref{def:quasi} is satisfied since, for these sub-cycles, all invariant $P_j$'s are coordinate planes and dim$(P_j \ominus \hat{L}_j)=1$.
Actually, the sub-cycles so obtained admit at every equilibrium one radial, one contracting, one expanding and two transverse eigenvalues. 
We refer the reader to \cite{GarCas2019} for detail on the classification of the eigenvalues.
All the connections in the Star cycle are one-dimensional,  in this case the sub-cycle coincides with the cycle.
We label the sub-cycles as $\Sigma_\text{R-to-P}$, $\Sigma_\text{Star}$, $\Sigma_\text{RSP}$
and $\Sigma_\text{4-node}$, respectively.

For every $j=1,\ldots,5$, the eigenvalues of $\xi_j$ are $-1$, $e_A$, $-c_B$, $e_B$ and $-c_A$, with eigenvectors in the $x_j$, $x_{j+1}$, $x_{j+2}$, $x_{j+3}$ and $x_{j+4}$ directions $(\text{mod }5)$, respectively. 
This naturally adds symmetry to the problem under the action of the group $\Z_5(\varphi)$ with $\varphi(x_1,x_2,x_3,x_4,x_5)=(x_5,x_1,x_2,x_3,x_4)$ as in~\cite{PosRuc}.
Let $H^{\inn}_j$ stand for the cross-section to the flow at an incoming connection to~$\xi_j$. Since the radial direction can be omitted all cross-sections are 3-dimensional 
-- we  take cross-sections within the invariant 4-sphere.
The dynamics near each sub-cycle is approximated by basic transition matrices\footnote{A basic transition matrix provides a convenient description of the dynamics from one incoming cross-section to the next. Its entries are 0's and 1's, except for 
one column which consists of quotients between the modulus of the contracting and expanding eigenvalues (for one entry) and between the symmetric of transverse eigenvalues and the expanding eigenvalue (for the remaining rows). The definition of a transition matrix goes back to the work of Field and Swift \cite{FieldSwift}. A detailed construction of basic transition matrices in the context of cycles of type Z (a subset of quasi-simple cycles) can be found in \cite{Podvigina2012}. Here we use the work of \cite{GarCas2019}.} 
$\M_j:H^{\inn}_j \rightarrow H^{\inn}_{j+1}$ whose entries are rational functions of the eigenvalues at~$\xi_j$,
where we change the indexing, so now $\xi_{j+1}$ is the equilibrium with a connection $\xi_j\to\xi_{j+1}$ in the sub-cycle under study.
The basic transition matrices coincide with those presented in \cite[Subsection 4.1]{PosRuc}. The results from~\cite{GarCas2019} hold in the present case and the stability of the sub-cycles is governed by properties of the basic transition matrices and their product:
\begin{align*}
\M^{(j)}: H^{\inn}_j & \rightarrow H^{\inn}_j,  &  \M^{(j)} & = \M_{j-1} \hdots \M_1 \M_m \hdots \M_j, \\[0.2cm]
\M_{\left(l,j\right)}: H^{\inn}_j  &\rightarrow H^{\inn}_{l+1},  & \M_{\left(l,j\right)}  & =\begin{cases}
\M_{l}\ldots \M_{j}, & l>j\\
\M_{l}\ldots \M_{1}\M_{m}\ldots \M_{j}, & l<j\\
\M_{j}, & l=j,
\end{cases}
\end{align*}
where 
$m \in \{3,4,5\}$ is the number of equilibria. 

Given a $3\times 3$ 
matrix $\M$, denote by $\lambda_{\max}$ the maximal eigenvalue in absolute value and by $\boldsymbol{w}^{\max}=(w^{\max}_1,w^{\max}_2, w^{\max}_3)^\text{T}$ the corresponding eigenvector, where the superscript ``T'' indicates the transpose of a matrix in general.
The conditions for stability are (cf~\cite[Lemma 3.2]{GarCas2019}):
\begin{enumerate}
\item[\emph{(i)}]
$\lambda_{\max}$ is real,
\item[\emph{(ii)}]
$\lambda_{\max}>1$,
\item[\emph{(iii)}]
$w^{\max}_l w^{\max}_q >0$ for all $l,q = 1,2,3$. 
\end{enumerate}
Combining these with Theorem~3.10 in~\cite{GarCas2019} we derive expressions for the stability indices by means of a function $F^{\ind}$. 
We reproduce the values of $F^{\ind}(\boldsymbol{\alpha})$ for any $\boldsymbol{\alpha}=(\alpha_1,\alpha_2,\alpha_3) \in \R^3$ from Appendix~A.1 of~\cite{GarCas2019}:  
\[
F^{\ind}\left(\boldsymbol{\alpha}\right)=\begin{cases}
+\infty, & \textrm{if }\min\left\{ \alpha_{1},\alpha_{2},\alpha_{3}\right\} \geq0\\[0.1cm]
-\infty, & \textrm{if }\max\left\{ \alpha_{1},\alpha_{2},\alpha_{3}\right\} \leq0\\[0.1cm]
0, & \textrm{if }\alpha_{1}+\alpha_{2}+\alpha_{3}=0\\[0.1cm]
\dfrac{\alpha_{1}+\alpha_{2}+\alpha_{3}}{\max\left\{ \alpha_{1},\alpha_{2},\alpha_{3}\right\} }, & \textrm{if }\max\left\{ \alpha_{1},\alpha_{2},\alpha_{3}\right\} >0\textrm{ and }\alpha_{1}+\alpha_{2}+\alpha_{3}<0\\[0.5cm]
-\dfrac{\alpha_{1}+\alpha_{2}+\alpha_{3}}{\min\left\{ \alpha_{1},\alpha_{2},\alpha_{3}\right\} }, & \textrm{if }\min\left\{ \alpha_{1},\alpha_{2},\alpha_{3}\right\} <0\textrm{ and }\alpha_{1}+\alpha_{2}+\alpha_{3}>0.
\end{cases}
\]

In virtue of one repelling transverse direction at every $\xi_j$, all basic transition matrices~$\M_j$ have one negative entry. 
Define $\sigma_j$ to be the stability index along the incoming connection to $\xi_j$. 
The following proposition adapts Theorem 3.10 in~\cite{GarCas2019} to our setting which naturally satisfies Assumption 3.1 in~\cite{GarCas2019} (the global maps are described by permutation matrices).

\begin{proposition}[Theorem 3.10 in~\cite{GarCas2019}]
\label{prop:calc-index}
Let $\Sigma$ be a quasi-simple cycle with  basic transition matrices $\M_j$, $j = 1, \ldots, m$. 
\begin{enumerate}
\renewcommand{\theenumi}{(\alph{enumi})}
\renewcommand{\labelenumi}{{\theenumi}}
\item
If $\M^{(j)}$ does not satisfy conditions (i)--(iii)  for at least one $j$, then $\sigma_j= -\infty$ for all~$j$ and $\Sigma$ is completely unstable. 
\item
If $\M^{(j)}$ satisfies conditions (i)--(iii) for all~$j$, then $\Sigma$ is f.a.s. and there exist vectors $\boldsymbol{\alpha}^{(1)},\boldsymbol{\alpha}^{(2)},...,\boldsymbol{\alpha}^{(K)} \in \R^3$ such that\[
\sigma_j = \min_{i=1,\ldots,K} \left\{ F^{\ind} \left( \boldsymbol{\alpha}^{(i)}\right) \right\}.
\]
\end{enumerate}
\end{proposition}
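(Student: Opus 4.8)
The plan is to obtain Proposition~\ref{prop:calc-index} as a specialization of Theorem~3.10 in~\cite{GarCas2019}, so the substance of the proof is verifying that the present setting falls within the scope of that theorem; the dynamical content is then inherited. Three things must be checked. First, each sub-cycle $\Sigma_\text{R-to-P}$, $\Sigma_\text{Star}$, $\Sigma_\text{RSP}$, $\Sigma_\text{4-node}$ is quasi-simple when restricted to the invariant coordinate planes --- this was already established in the paragraph preceding the proposition (all $P_j$ are coordinate planes and $\dim(P_j\ominus\hat{L}_j)=1$), and every $\xi_j$ carries one radial, one contracting, one expanding and two transverse eigenvalues, exactly the eigenvalue configuration required by~\cite{GarCas2019}. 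Second, Assumption~3.1 of~\cite{GarCas2019} must hold, namely that the global maps along the connections be permutation matrices; this is precisely the normalisation recorded in the statement. Third, the basic transition matrices $\M_j$ must have the structure assumed in~\cite{GarCas2019}: entries $0$ and $1$ except for a single column of eigenvalue quotients, with exactly one negative entry arising from the single repelling transverse direction at each $\xi_j$. Once these are in place, Theorem~3.10 of~\cite{GarCas2019} applies.

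For part~(a) I would recall the mechanism behind the dichotomy. In logarithmic cross-section coordinates the return map to $H^{\inn}_j$ is asymptotically the linear map $\M^{(j)}$ acting on a cone, and convergence to $\Sigma$ of a positive-measure set requires $\M^{(j)}$ to possess a dominant real eigenvalue $\lambda_{\max}>1$ whose eigenvector $\boldsymbol{w}^{\max}$ lies in the open positive orthant --- exactly conditions (i)--(iii). If any one of them fails for some index $j$, no invariant attracting direction survives inside the admissible cone, the $\delta$-local basin shrinks to measure zero, so $\sigma_j=-\infty$ and $\Sigma$ is completely unstable. Because the sub-cycles inherit the $\Z_5$ symmetry generated by $\varphi$, the maps $\M^{(j)}$ are mutually conjugate, so failure at one index forces failure at every index; this is what makes $\sigma_j=-\infty$ hold uniformly in $j$ rather than only at the offending section.

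For part~(b), when (i)--(iii) hold for all $j$ the eigenvector $\boldsymbol{w}^{\max}$ supplies a genuine attracting cone, yielding f.a.s.\ directly. The stability index along the incoming connection to $\xi_j$ is then read off from the geometry of the basin boundary in $H^{\inn}_j$: asymptotically this boundary is a finite union of hyperplane pieces, each produced by one of the partial products $\M_{(l,j)}$ as it transports the negative (repelling) coordinate forward. Each piece contributes a vector $\boldsymbol{\alpha}^{(i)}\in\R^3$ whose entries are the relevant exponents, and $F^{\ind}(\boldsymbol{\alpha}^{(i)})$ computes the cusp index of that piece via the case analysis reproduced above. Since a trajectory escapes as soon as it crosses any one of these pieces, the thinnest cusp dominates and the index is $\sigma_j=\min_{i}F^{\ind}(\boldsymbol{\alpha}^{(i)})$; the finitely many vectors $\boldsymbol{\alpha}^{(1)},\ldots,\boldsymbol{\alpha}^{(K)}$ are exactly those produced by the construction in~\cite[Sec.~3]{GarCas2019}.

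The main obstacle is not the invocation but the matching step: confirming that the basic transition matrices used here --- which, as noted, agree with those in~\cite[Subsec.~4.1]{PosRuc} --- genuinely carry a single negative entry and otherwise have the $0$/$1$-plus-one-column form demanded by~\cite{GarCas2019}, and that the global maps reduce to permutations. Once this bookkeeping is carried out for the RSPLS eigenvalues $-1,e_A,-c_B,e_B,-c_A$, the identification of the vectors $\boldsymbol{\alpha}^{(i)}$ and the evaluation of $F^{\ind}$ are mechanical, and both cases (a) and (b) follow from~\cite{GarCas2019}.
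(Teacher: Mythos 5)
Your proposal follows the same route as the paper: Proposition~\ref{prop:calc-index} is not proved there but imported from Theorem~3.10 of \cite{GarCas2019}, with the only real work being the verification that the setting meets its hypotheses (quasi-simplicity of the restricted sub-cycles, permutation global maps as in Assumption~3.1, and the $0$/$1$-plus-one-column structure of the $\M_j$ with a single negative entry), which is exactly what you do. One caveat: your explanation of why $\sigma_j=-\infty$ holds \emph{for all} $j$ once conditions (i)--(iii) fail at a single index leans on the $\Z_5$ symmetry making the $\M^{(j)}$ ``mutually conjugate''. That reasoning is not the right mechanism and would not cover the cases the proposition is actually applied to: the statement is for a general quasi-simple cycle, and for $\Sigma_\text{RSP}$ and $\Sigma_\text{4-node}$ the matrices $\M^{(j)}$ are distinct cyclic products that, while similar (so conditions (i)--(ii) transfer), need \emph{not} share the sign pattern of the leading eigenvector, which is why condition (iii) is checked separately for each $j$ in Propositions~\ref{prop:index-RSP} and~\ref{prop:index-4node}. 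The uniformity of complete instability in $j$ is a dynamical fact from \cite{GarCas2019} --- the flow carries the $\delta$-local basin seen in one cross-section onto the one seen in the next, so a measure-zero basin at one section forces measure zero at all of them --- not a consequence of equivariance. Since you are citing the theorem rather than reproving it, this does not break the argument, but the heuristic as stated is misleading.
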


For each $j=1,\ldots,m$, the vectors $\boldsymbol{\alpha}^{(i)}$ that must be considered are the rows of the transition matrices $\M_{(j,j)}=\M_j$, $\M_{(j+1,j)}=\M_{j+1}\M_j$, $\M_{(j+2,j)}=\M_{j+2}\M_{j+1}\M_j$, $\ldots,$ $\M_{(j-1,j)}=\M^{(j)}$. The number~$K$ refers to the number of such rows whenever
\begin{equation}
\label{eq:U-infty}
U^{-\infty}\left(\M^{(j)}\right) = \left\{\boldsymbol{y} \in \R_-^3: \ \lim_{k\rightarrow +\infty} \left(\M^{(j)} \right)^k \boldsymbol{y} = \boldsymbol{-\infty} \right\} = \R_-^3,
\end{equation}
is satisfied, where $\R_-^3=\left\{ \boldsymbol{y}=(y_1,y_2,y_3) \in \R^3: \ y_1,y_2,y_3<0\right\}$, see \cite{GarCas2019} for details.

\subsection{Stability of the elementary cycles}

In this subsection we provide the stability results for each of the four elementary cycles in the RSPLS network.

\paragraph{The Rock-to-Paper sub-cycle:} 
The Rock-to-Paper sub-cycle comprises five equilibria and five 1-dimensional heteroclinic connections in the order, see Figure~\ref{fig:cycles}(a):
%(b)
\[
\Sigma_\text{R-to-P}=[\xi_1 \rightarrow \xi_2 \rightarrow \xi_3 \rightarrow \xi_4 \rightarrow \xi_5 \rightarrow \xi_1].
\]
The behaviour of trajectories between any two consecutive equilibria is captured up to a permutation by the basic transition matrix $\M_2 : H_2^{\inn} \rightarrow H_3^{\inn}$ with
\[
\M_2 = \begin{bmatrix}
\dfrac{c_B}{e_A} & 0 & 1 \\[0.5cm]
\dfrac{c_A}{e_A} & 0 & 0 \\[0.5cm]
-\dfrac{e_B}{e_A} & 1 & 0
\end{bmatrix}.
\]
Starting near each equilibrium, 
  the powers $(\M_2)^l$, $l=1,\ldots,5$ provide an approximation of a trajectory that visits once a neighbourhood of  each equilibrium of $\Sigma_\text{R-to-P}$.
The stability indices may thus be computed from the rows of $\M_2$.

We have the following:

\begin{proposition} \label{prop:ind-R-to-P}
The local stability indices for the Rock-to-Paper sub-cycle $\Sigma_\text{R-to-P}$ are all equal and:
\begin{enumerate}
\renewcommand{\theenumi}{(\alph{enumi})}
\renewcommand{\labelenumi}{{\theenumi}}
\item
if either $c_A + c_B < e_A + e_B$ or $c_A e_A < c_B e_B$ or $c_A c_B^3 < e_A e_B^3$, then $\sigma_\text{R-to-P} = -\infty$.
\item
if $c_A + c_B > e_A + e_B$ and $c_A e_A > c_B e_B$ and $c_A c_B^3 > e_A e_B^3$, then 
\[
 -\infty <  \ 
\sigma_\text{R-to-P} 
\ \leq \
F^{\ind}\left( -\frac{e_B}{e_A},1,0\right).
\]
\end{enumerate}
\end{proposition}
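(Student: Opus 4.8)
The driving tool is Proposition~\ref{prop:calc-index}, so the whole argument reduces to the spectral data of the single basic transition matrix $\M_2$. The plan is to first exploit the $\Z_5(\varphi)$ symmetry: since the five matrices $\M_j$ are conjugate via the coordinate permutation, every return map $\M^{(j)}$ is conjugate to $(\M_2)^5$ and all five indices coincide, which already yields the assertion that the $\sigma_{\text{R-to-P}}$ are all equal. I would then compute the characteristic polynomial of $\M_2$, which after clearing the denominator $e_A$ is
\[
p(\lambda)=e_A\lambda^3-c_B\lambda^2+e_B\lambda-c_A .
\]
A Descartes sign count shows $p$ has no negative real root and at least one positive real root, and solving $(\M_2-\lambda I)\boldsymbol{w}=0$ gives the eigenvector of a root $\lambda$ as $\boldsymbol{w}=\bigl(1,\ c_A/(e_A\lambda),\ \lambda-c_B/e_A\bigr)^{\text{T}}$. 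Eigenvectors are unchanged by taking powers, so conditions (i)--(iii) for $(\M_2)^5$ can be read off from $\M_2$ itself, with the maximal modulus eigenvalue of $(\M_2)^5$ equal to the fifth power of that of $\M_2$.

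The key computation is that the three quantities in the statement are exactly $p$ evaluated at three distinguished points: a direct substitution gives $p(1)=(e_A+e_B)-(c_A+c_B)$, $\ e_A\,p(c_B/e_A)=c_Be_B-c_Ae_A$, and $\ c_B^3\,p(e_B/c_B)=e_Ae_B^3-c_Ac_B^3$. I would then translate conditions (i)--(iii) into the location of the dominant positive real root $\lambda_1$. From the eigenvector formula, condition (iii) (all components of $\boldsymbol{w}^{\max}$ of one sign) holds exactly when $\lambda_1>c_B/e_A$; condition (ii) is $\lambda_1>1$; and condition (i) ($\lambda_{\max}$ real, i.e.\ the real root dominates a possible complex pair $\lambda_{2,3}$) is $\lambda_1^3>c_A/e_A$, which using $p(\lambda_1)=0$ to write $e_A\lambda_1^3-c_A=\lambda_1(c_B\lambda_1-e_B)$ becomes simply $\lambda_1>e_B/c_B$. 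Thus the three thresholds $1$, $c_B/e_A$, $e_B/c_B$ govern (ii), (iii), (i) respectively.

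For part~(b) the three strict inequalities say precisely $p<0$ at each of the three points; since $p(+\infty)=+\infty$ there is a real root $\lambda_1$ exceeding $\max\{1,c_B/e_A,e_B/c_B\}$, so (i)--(iii) all hold for $(\M_2)^5$. Proposition~\ref{prop:calc-index}(b) then gives f.a.s.\ (hence $\sigma_{\text{R-to-P}}>-\infty$) together with $\sigma_{\text{R-to-P}}=\min_i F^{\ind}(\boldsymbol{\alpha}^{(i)})$ over the rows of $\M_2,(\M_2)^2,\ldots,(\M_2)^5$; I would check \eqref{eq:U-infty} (the positive dominant eigenvector with $\lambda_1>1$ drives every $\boldsymbol{y}\in\R^3_-$ to $\boldsymbol{-\infty}$) so that all rows are admissible. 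As the third row of $\M_2$ is $(-e_B/e_A,1,0)$ and $F^{\ind}$ depends only on the minimum, maximum and sum of its arguments (so it is permutation-invariant and unaffected by the cyclic relabelling), the minimum is bounded above by $F^{\ind}(-e_B/e_A,1,0)$, which is the stated bound. Part~(a) is the contrapositive: each reversed inequality forces $p\ge0$ at one of the three points and hence the failure of the matching condition---$c_Ae_A<c_Be_B$ pushes $\lambda_1$ below $c_B/e_A$ so $\boldsymbol{w}^{\max}$ has mixed signs and (iii) fails; $c_Ac_B^3<e_Ae_B^3$ makes the complex pair dominate so $\lambda_{\max}$ is not real and (i) fails; $c_A+c_B<e_A+e_B$ forces $\lambda_{\max}\le1$---whence Proposition~\ref{prop:calc-index}(a) gives $\sigma_{\text{R-to-P}}=-\infty$ and complete instability.

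The main obstacle is the root-location step in part~(a): a single violated inequality only asserts $p>0$ at one point, and I must rule out that this merely reflects $p$ being positive between two roots while the genuinely dominant root still sits on the favourable side of the threshold. Closing this gap requires the full sign/monotonicity analysis of the cubic together with the no-negative-root fact (and a separate check that no complex pair can sneak in to spoil dominance when the real-root conditions hold), and this case-by-case verification is what I would relegate to the appendix.
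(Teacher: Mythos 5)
Your skeleton is the same as the paper's: reduce everything to conditions \emph{(i)}--\emph{(iii)} for the single matrix $\M_2$ via Proposition~\ref{prop:calc-index}, identify the three parameter inequalities with spectral properties of $\M_2$, and in case (b) bound $\sigma_\text{R-to-P}$ by $F^{\ind}$ of the one negative row $(-e_B/e_A,1,0)$. Your computations $p(1)=(e_A+e_B)-(c_A+c_B)$, $e_A\,p(c_B/e_A)=c_Be_B-c_Ae_A$ and $c_B^3\,p(e_B/c_B)=e_Ae_B^3-c_Ac_B^3$ are correct and are a nice way of seeing where the three inequalities come from. The difference is that the paper does not prove the equivalence between the inequalities and conditions \emph{(i)}--\emph{(iii)} at all: it invokes Lemma~10 of Podvigina (2013) as a black box (together with Vieta's formulas to see that $\lambda_1$ is the only eigenvalue of positive real part), whereas you attempt a direct root-location derivation. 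That is a legitimate, more self-contained route, but it is exactly where your argument is incomplete, as you yourself flag.

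Two concrete points. First, in part (a) your one-to-one matching of each reversed inequality to the failure of a specific condition is not established and is not even what you need: e.g.\ $p(c_B/e_A)>0$ is compatible with $c_B/e_A$ lying in the interval $(\mu_1,\mu_2)$ between the two smaller of three positive real roots, in which case the dominant root still exceeds $c_B/e_A$ and condition \emph{(iii)} holds --- so you must then show that one of the \emph{other} conditions fails. What part (a) requires is the implication ``(i)--(iii) all hold $\Rightarrow$ all three inequalities hold'', and closing that (your deferred case analysis over the sign pattern of the cubic, the no-negative-root fact, and the complex-pair criterion $\lambda_1^3\gtrless c_A/e_A$) is precisely the content of the cited lemma; until it is written out your proof of (a) is a proof sketch. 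Second, your verification of \eqref{eq:U-infty} is asserted rather than done: a positive dominant right eigenvector does not by itself force every $\boldsymbol{y}\in\R_-^3$ to have a negative coefficient on it; you need the corresponding \emph{left} eigenvector to be signed, which the paper checks explicitly by computing $\boldsymbol{v}^{\max}\propto\left(\lambda_1,\ 1/\lambda_1,\ 1\right)^\text{T}$. Both gaps are closable, but as submitted the argument relies on them.
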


\begin{proof}
According to Proposition~\ref{prop:calc-index}, the stability of $\Sigma_\text{R-to-P}$ depends on whether or not~$\M_2$ satisfies conditions \emph{(i)}--\emph{(iii)}.
Eigenvalues of $\M_2$ are the roots of the characteristic polynomial
\[
p(\lambda)=-\lambda^3 + a_2 \lambda^2 + a_3 \lambda + a_1,
\]
where
\[
a_1 = \frac{c_A}{e_A}, \quad a_2 =  \frac{c_B}{e_A}, \quad a_3 = -\frac{e_B}{e_A}.
\]
Let $\lambda_1,\lambda_2, \lambda_3 \in \C$ be the eigenvalues of $\M_2$ such that $\lambda_1 = \lambda_{\max}$ and $\boldsymbol{w}^{\max}$ is the eigenvector associated with $\lambda_1$. Vieta's formulas applied to cubic polynomials give
\begin{equation}
\label{eq:viete}
\lambda_1 + \lambda_2 + \lambda_3 = \frac{c_B}{e_A}, \qquad 
\lambda_1\lambda_2 + \lambda_1\lambda_3 + \lambda_2 \lambda_3  = \frac{e_B}{e_A}, \qquad 
\lambda_1 \lambda_2 \lambda_3  = \frac{c_A}{e_A}.
\end{equation}
Using Lemma~10 in~\cite{Podvigina2013} we find that conditions \emph{(i)}--\emph{(iii)} for $\M_2$ are individually fulfilled if and only if 
\begin{align}
\frac{c_A}{e_A} +  \frac{c_B}{e_A}  -\frac{e_B}{e_A} >1 \ & \Leftrightarrow  \ c_A + c_B > e_A + e_B, \label{eq:cond-i} \\[0.2cm]
- \frac{c_B}{e_A} \frac{e_B}{e_A} +  \frac{c_A}{e_A} > 0 \ & \Leftrightarrow  \ c_A e_A > c_B e_B, \label{eq:cond-ii}  \\[0.2cm]
\frac{c_A}{e_A}\frac{c_B^3}{e_A^3}  -\frac{e_B^3}{e_A^3} >0 \ & \Leftrightarrow \ c_A c_B^3 > e_A e_B^3. \label{eq:cond-iii} 
\end{align} 

When one of the relations~\eqref{eq:cond-i} to \eqref{eq:cond-iii} does not hold, statement~(a) is immediate from Proposition~\ref{prop:calc-index}(a). 

Suppose now that~\eqref{eq:cond-i}--\eqref{eq:cond-iii} hold true. Then, $\lambda_1=\lambda_{\max}>1$ and the components of $\boldsymbol{w}^{\max}$ have all the same sign. The identities in~\eqref{eq:viete} enable one to disclose that $\lambda_1$ is the only eigenvalue with positive real part. It follows that $\lambda_2+\lambda_3<0$ and $\lambda_2\lambda_3>0$.
We check that $U^{-\infty}\left(\M_2\right) =\R_-^3$ in~\eqref{eq:U-infty}.
This is equivalent to showing that any $\boldsymbol{y} \in \R_-^3$ written in the eigenbasis of $\M_2$ must have a negative coefficient for the largest eigenvector. The coefficient writes as
$
( \boldsymbol{v}^{\max})^\text{T} \boldsymbol{y},
$
where $\boldsymbol{v}^{\max}$ is a vector multiple of
%\[
%\left(\frac{\frac{c_B}{e_A}-(\lambda_2+\lambda_3)}{\lambda_1^2 - \lambda_1(\lambda_2 +\lambda_3) +\lambda_2\lambda_3} ,  \ \frac{\lambda_2 \lambda_3 \frac{e_A}{c_A}}{\lambda_1^2 - \lambda_1(\lambda_2 +\lambda_3) +\lambda_2\lambda_3} , \  \frac{1}{\lambda_1^2 - \lambda_1(\lambda_2 +\lambda_3) +\lambda_2\lambda_3} \right)^\text{T}.
%\]
\[
\left( \lambda_1 ,  \frac{1}{\lambda_1} ,  1 \right)^\text{T}.
\]
Because $\lambda_1 > 0$ we get $( \boldsymbol{v}^{\max})^\text{T} \boldsymbol{y}<0$ for any $\boldsymbol{y} \in \R_-^3$. 
%Because $\lambda_1^2 - \lambda_1(\lambda_2 +\lambda_3) +\lambda_2\lambda_3 > 0$,  $\frac{c_B}{e_A}-(\lambda_2+\lambda_3) \geq 0$ and $\lambda_2 \lambda_3 \frac{e_A}{c_A} > 0$, the components of~$\boldsymbol{v}^{\max}$ are all non-negative. We thus get $( \boldsymbol{v}^{\max})^\text{T} \boldsymbol{y}<0$ for any $\boldsymbol{y} \in \R_-^3$. 
Proposition~\ref{prop:calc-index}(b) applies and we need to take into account the rows with at least one negative entry of $\M_2$, $(\M_2)^2$, $(\M_2)^3$, $(\M_2)^4$ and $(\M_2)^5$ in Appendix~\ref{app:R-to-P}, so that 
\[
\begin{aligned}
\sigma_\text{R-to-P}
\leq & \ F^\textnormal{index} \left(-\frac{e_B}{e_A},1,0\right).
\end{aligned}
\]
\end{proof}

\paragraph{The Star cycle:}
The Star cycle comprises five equilibria and five 1-dimensional heteroclinic connections in the order, see Figure~\ref{fig:cycles}(b):
%(c)
\[
\Sigma_\text{Star}=[\xi_1 \rightarrow \xi_4 \rightarrow \xi_2 \rightarrow \xi_5 \rightarrow \xi_3 \rightarrow \xi_1].
\]
The transition between any two consecutive equilibria is described up to a permutation by the basic transition matrix $\M_4 : H_4^{\inn} \rightarrow H_2^{\inn}$ with
\[
\M_4= \begin{bmatrix}
0 & \dfrac{c_A}{e_B} & 1 \\[0.5cm]
1 & -\dfrac{e_A}{e_B} & 0 \\[0.5cm]
0 & \dfrac{c_B}{e_B} & 0
\end{bmatrix}.
\]
Again we are reduced to establishing the stability properties of $\Sigma_\text{Star}$ by taking the rows of~$\M_4$ as follows.
\begin{proposition}\label{prop:ind-Star}
The local stability indices for the Star cycle 
$\Sigma_\text{Star}$ are all equal and:
\begin{enumerate}
\renewcommand{\theenumi}{(\alph{enumi})}
\renewcommand{\labelenumi}{{\theenumi}}
\item
if either $c_A + c_B < e_A + e_B$ or $c_B e_B < c_A e_A$ or $c_A^3 e_B < c_B e_A^3$, then $\sigma_\text{Star} = -\infty$.
\item
if $c_A + c_B > e_A + e_B$ and $c_B e_B > c_A e_A$ and $c_A^3 e_B > c_B e_A^3$, then 
\[
- \infty < \
\sigma_\text{Star} 
\ \leq \
F^{\ind}\left( 1, -\frac{e_A}{e_B}, 0\right).
\]
\end{enumerate}
\end{proposition}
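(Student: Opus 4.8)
The plan is to follow, almost verbatim, the argument given for Proposition~\ref{prop:ind-R-to-P}, with the matrix $\M_2$ replaced by $\M_4$. The $\Z_5$-symmetry again guarantees that all basic transition matrices are permutation-conjugate, so the return map $\M^{(j)}$ is conjugate to $(\M_4)^5$ and the candidate vectors $\boldsymbol{\alpha}^{(i)}$ of Proposition~\ref{prop:calc-index} are the rows of $\M_4,(\M_4)^2,\ldots,(\M_4)^5$. First I would compute the characteristic polynomial of $\M_4$, obtaining $p(\lambda)=-\lambda^3+a_2\lambda^2+a_3\lambda+a_1$ with $a_1=c_B/e_B$, $a_2=-e_A/e_B$ and $a_3=c_A/e_B$, and record Vieta's relations $\lambda_1+\lambda_2+\lambda_3=-e_A/e_B$, $\lambda_1\lambda_2+\lambda_1\lambda_3+\lambda_2\lambda_3=-c_A/e_B$ and $\lambda_1\lambda_2\lambda_3=c_B/e_B$.

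Next I would invoke Lemma~10 in~\cite{Podvigina2013} to translate the stability conditions \emph{(i)}--\emph{(iii)} for $\M_4$ into inequalities on the parameters. Condition \emph{(i)} amounts to $p(1)>0$, i.e.\ $a_1+a_2+a_3>1$, which simplifies to $c_A+c_B>e_A+e_B$; condition \emph{(ii)} amounts to $a_1+a_2a_3>0$, which simplifies to $c_Be_B>c_Ae_A$; and condition \emph{(iii)} amounts to $a_1a_2^3+a_3^3>0$, which simplifies to $c_A^3e_B>c_Be_A^3$. If any one of these fails, then $\M^{(j)}$ fails \emph{(i)}--\emph{(iii)} for every $j$, and Proposition~\ref{prop:calc-index}(a) yields $\sigma_\text{Star}=-\infty$ together with complete instability, which is statement~(a).

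For statement~(b), assuming all three inequalities hold, I would argue as in the Rock-to-Paper case that $\lambda_1=\lambda_{\max}$ is real, exceeds $1$, and admits an eigenvector $\boldsymbol{w}^{\max}$ with components of one sign, while the Vieta relations force $\lambda_2+\lambda_3<0$ and $\lambda_2\lambda_3>0$, so $\lambda_1$ is the unique eigenvalue with positive real part. The decisive remaining step is to verify $U^{-\infty}(\M_4)=\R_-^3$: solving $\M_4^\text{T}\boldsymbol{v}=\lambda_1\boldsymbol{v}$ shows that the left eigenvector $\boldsymbol{v}^{\max}$ is a multiple of $(1,\lambda_1,1/\lambda_1)^\text{T}$, whose entries are all positive because $\lambda_1>0$; hence $(\boldsymbol{v}^{\max})^\text{T}\boldsymbol{y}<0$ for every $\boldsymbol{y}\in\R_-^3$, which is exactly the condition needed. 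Proposition~\ref{prop:calc-index}(b) then applies, and since the only row of $\M_4$ carrying a negative entry is $(1,-e_A/e_B,0)$, this vector is among the $\boldsymbol{\alpha}^{(i)}$ and the minimum defining $\sigma_\text{Star}$ is bounded above by $F^{\ind}(1,-e_A/e_B,0)$.

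I expect the only genuine obstacle to be the verification that $U^{-\infty}(\M_4)=\R_-^3$, since everything else is symmetric bookkeeping identical to the Rock-to-Paper computation. The left-eigenvector calculation and the sign analysis of $\lambda_2,\lambda_3$ are what make that condition hold, and a little care is needed to track the sign pattern $(1,\lambda_1,1/\lambda_1)^\text{T}$ for $\M_4$ rather than the pattern $(\lambda_1,1/\lambda_1,1)^\text{T}$ that arose for $\M_2$; the conclusion, that all three entries are positive, is nonetheless the same and is what drives the argument.
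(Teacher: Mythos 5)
Your proposal is correct and follows essentially the same route as the paper, which itself proves this proposition by repeating the method of Proposition~\ref{prop:ind-R-to-P} with the characteristic polynomial $p(\lambda)=-\lambda^3-\frac{e_A}{e_B}\lambda^2+\frac{c_A}{e_B}\lambda+\frac{c_B}{e_B}$ of $\M_4$ and the rows of $\M_4,\ldots,(\M_4)^5$ from the appendix. Your translation of conditions \emph{(i)}--\emph{(iii)} into the three stated inequalities and your left-eigenvector sign pattern $(1,\lambda_1,1/\lambda_1)^\text{T}$ for $U^{-\infty}(\M_4)=\R_-^3$ both check out.
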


\begin{proof}
This follows by the same method of the proof of Proposition~\ref{prop:ind-R-to-P},
making use of the transition matrix $\M_4$ whose characteristic polynomial is $p(\lambda)= -\lambda^3 - \frac{e_A}{e_B} \lambda^2+\frac{c_A}{e_B} \lambda + \frac{c_B}{e_B}$. 
For~(b) the negative entries of $\M_4$, $(\M_4)^2$, $(\M_4)^3$, $(\M_4)^4$ and $(\M_4)^5$ in Appendix~\ref{app:Star} must be considered. 
\end{proof}

\paragraph{The RSP sub-cycle:}
The RSP sub-cycle comprises three equilibria and three 1-dimensional connections in the order, see Figure~\ref{fig:cycles}(c):
 
 \[
\Sigma_\text{RSP}=[\xi_1 \rightarrow \xi_2 \rightarrow \xi_3 \rightarrow \xi_1].
\]
We write down the three basic transition matrices  
$\M_j : H_j^{\inn} \rightarrow H_{j+1}^{\inn}$, $j= 1,2,3  \text{ (mod } 3)$, with
\[
\M_1 = \begin{bmatrix}
\dfrac{c_B}{e_A} & 0 & 0 \\[0.5cm]
\dfrac{c_A}{e_A} & 0 & 1 \\[0.5cm]
-\dfrac{e_B}{e_A} & 1 & 0
\end{bmatrix}, 
\quad \M_2 = \begin{bmatrix}
\dfrac{c_B}{e_A} & 0 & 1 \\[0.5cm]
\dfrac{c_A}{e_A} & 0 & 0 \\[0.5cm]
-\dfrac{e_B}{e_A} & 1 & 0
\end{bmatrix},
\quad \M_3  = \begin{bmatrix}
0 &\dfrac{c_A}{e_B} & 0  \\[0.5cm]
1 & -\dfrac{e_A}{e_B} & 0 \\[0.5cm]
0 & \dfrac{c_B}{e_B} & 1  
\end{bmatrix}.
\]
The products $\M_{(j+1,j)}= \M_{j+1} \M_j: H_j^{\inn} \rightarrow H_{j+2}^{\inn}$ and $\M^{(j)}=\M_{j+2} \M_{j+1} \M_j: H_j^{\inn}  \rightarrow H_j^{\inn}$, $j= 1,2,3  \text{ (mod } 3)$ can be found in Appendix~\ref{subsec:RSP}. 
The following quantities are useful:
\begin{align*}
\delta_T & = \frac{c_A^2 c_B}{e_A^2 e_B} &  
\gamma_T & = \frac{c_A^3}{e_A^2 e_B} + \frac{c_B c_A}{e_B e_A} - \frac{e_B}{e_A} \\[0.2cm]
\alpha_T & = \frac{c_B^2}{e_A^2} - \frac{c_A c_B}{e_B e_A}- \frac{e_B}{e_A}  &
\theta_T & = - \frac{c_A^2}{e_A^2} +  \frac{c_B}{e_A} - \frac{c_A}{e_B}
\\[0.2cm]
\beta_T & =  \frac{c_B^2 c_A}{e_A^2 e_B} - \frac{e_B c_B}{e_A^2} + \frac{c_A}{e_A}  &
\mu_T & = \frac{c_B^2 c_A}{e_A^2 e_B} - \frac{c_A}{e_A} - \frac{e_A}{e_B} \\[0.2cm]
 & & 
 \nu_T & =  -\frac{c_B c_A}{e_A^2} + \frac{c_A^2}{e_A e_B} + \frac{c_B}{e_B}.
\end{align*}

In the next result the conditions imposed in \ref{RSPa} and \ref{RSPb} are complementary, in view of Lemma~\ref{lem:entries} in Appendix~\ref{subsec:proof}.
We denote by $\sigma_{ij}$ the stability index along the trajectory connecting $\xi_i$ to $\xi_j$.

\begin{proposition} \label{prop:index-RSP}
The local stability indices for the RSP sub-cycle $\Sigma_\text{RSP}$ are:
\begin{enumerate}
\renewcommand{\theenumi}{(\alph{enumi})}
\renewcommand{\labelenumi}{{\theenumi}}
\item\label{RSPa}
 if either $\delta_T<1$ or 
 $\alpha_T<0$ or $\beta_T<0$ or 
 $\gamma_T<0$ or 
$\theta_T<0$ or 
 $\mu_T<0$,
 or $\nu_T<0$,
   then $\sigma_{31}= \sigma_{12} = \sigma_{23} = -\infty$.
\item\label{RSPb}
 if $\delta_T > 1$ and $\theta_T>0$ and $\nu_T>0$, then
\begin{align*}
\sigma_{31} &  = \begin{cases}
\min \left\{ 1 - \dfrac{e_B}{e_A},  1 - \dfrac{e_Bc_B}{e_A^2} + \dfrac{c_A}{e_A}\right\}\; \;  (<0) & \text{ if } \dfrac{e_B}{e_A} > \max\left\{1, \dfrac{e_A+c_A}{c_B}\right\} \\[0.5cm]
1 - \dfrac{e_B}{e_A} \; \;  (<0) & \text{ if }  1 < \dfrac{e_B}{e_A} < \dfrac{e_A+c_A}{c_B} \\[0.5cm]
1 - \dfrac{e_Bc_B}{e_A^2} + \dfrac{c_A}{e_A} \; \;  (<0) & \text{ if }   \dfrac{e_A+c_A}{c_B}  \leq \dfrac{e_B}{e_A} < 1 \\[0.5cm]
\min \left\{ -1 + \dfrac{e_A}{e_B},  -1 + \dfrac{e_A^2}{e_B c_B - c_A e_A }\right\}\; \;  (>0) & \text{ if } \dfrac{e_B}{e_A} < \min\left\{1, \dfrac{e_A+c_A}{c_B}\right\} 
\end{cases} \\[0.5cm]
\sigma_{12}  & = \begin{cases}
1 - \dfrac{e_B}{e_A}   \; \;  (<0) & \text{ if }  \begin{aligned}[t] 1 < \dfrac{e_B}{e_A} \leq \dfrac{c_Bc_A}{e_Be_A} & \text{ or } \max\left\{1,\dfrac{c_Bc_A}{e_Be_A} \right\} < \dfrac{e_B}{e_A} < 1+ \dfrac{c_Bc_A}{e_Be_A} \\[0.3cm]
\text{ or } &\dfrac{e_B}{e_A} \geq 1+ \dfrac{c_Bc_A}{e_B e_A} \end{aligned} \\[1.5cm]
 -1 + \dfrac{e_A}{e_B}   \; \;  (>0)  & \text{ if } \dfrac{e_B}{e_A} < \min\left\{1, \dfrac{c_Bc_A}{e_Be_A} \right\} \text{ or }  \dfrac{c_Bc_A}{e_Be_A} < \dfrac{e_B}{e_A} < 1
 \end{cases} \\[0.5cm]
\sigma_{23} & = \begin{cases}
1- \dfrac{c_A}{e_A}  -  \dfrac{e_A}{e_B}   \; \;  (<0) & \text{ if } \dfrac{e_B}{e_A}<1 \text{ or } 1<  \dfrac{e_B}{e_A} <  \dfrac{e_A}{e_A-c_A}\\[0.5cm]
1-\dfrac{e_A}{e_B}  \; \;  (<0) & \text{ if } \dfrac{e_B}{e_A} > \dfrac{e_A}{e_A-c_A}.
 \end{cases} 
\end{align*}
\end{enumerate}
\end{proposition}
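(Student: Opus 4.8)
The plan is to follow the method of Proposition~\ref{prop:ind-R-to-P}, the essential new difficulty being that the three connections are of different types (AAB), so the basic transition matrices $\M_1$, $\M_2$, $\M_3$ are genuinely distinct and the three return maps must be treated on an equal footing. I would first record the return map $\M^{(1)}=\M_3\M_2\M_1$ (the remaining maps $\M^{(2)}=\M_1\M_3\M_2$ and $\M^{(3)}=\M_2\M_1\M_3$ are cyclic conjugates of $\M^{(1)}$ and hence share its spectrum), using the explicit product deferred to Appendix~\ref{subsec:RSP}, and extract its characteristic polynomial. Applying Lemma~10 in~\cite{Podvigina2013} to this cubic then translates the stability conditions (i)--(iii) of Proposition~\ref{prop:calc-index} into sign and magnitude requirements on the coefficients; the seven scalars $\delta_T,\gamma_T,\alpha_T,\theta_T,\beta_T,\mu_T,\nu_T$ are precisely the expressions that arise in this translation.

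For part~\ref{RSPa}, if any one of the displayed inequalities is reversed, then $\M^{(1)}$, and therefore every $\M^{(j)}$, fails at least one of (i)--(iii). Proposition~\ref{prop:calc-index}(a) then applies verbatim to give $\sigma_{31}=\sigma_{12}=\sigma_{23}=-\infty$ and complete instability of $\Sigma_\text{RSP}$.

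For part~\ref{RSPb} I would first invoke Lemma~\ref{lem:entries}: the three hypotheses $\delta_T>1$, $\theta_T>0$ and $\nu_T>0$ force the remaining positivity conditions $\alpha_T,\beta_T,\gamma_T,\mu_T>0$, so that (i)--(iii) hold for every $\M^{(j)}$ and $\Sigma_\text{RSP}$ is f.a.s. As in the Rock-to-Paper argument I would then verify $U^{-\infty}(\M^{(j)})=\R_-^3$: Vieta's formulas for the cubic show that $\lambda_{\max}>0$ is the unique eigenvalue of positive real part, and the dominant left-eigenvector inherits a sign pattern making $(\boldsymbol{v}^{\max})^\text{T}\boldsymbol{y}<0$ for every $\boldsymbol{y}\in\R_-^3$. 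This guarantees that the number $K$ in Proposition~\ref{prop:calc-index}(b) counts all negative-entry rows of the relevant products.

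It remains to compute each index separately, and here the three connections diverge because their row-families differ. For $\sigma_{31}$ I would collect the rows with a negative entry among $\M_1$, $\M_2\M_1$ and $\M^{(1)}$; for $\sigma_{12}$, among $\M_2$, $\M_3\M_2$ and $\M^{(2)}$; and for $\sigma_{23}$, among $\M_3$, $\M_1\M_3$ and $\M^{(3)}$ (all listed in Appendix~\ref{subsec:RSP}). Evaluating $F^{\ind}$ on each such row and taking the minimum, as prescribed by Proposition~\ref{prop:calc-index}(b), yields the three (now unequal) indices. The main obstacle is the ensuing case analysis: for each connection one must decide, across all admissible parameter regimes, which row attains the minimum of $F^{\ind}$ and confirm the sign of the resulting value. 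Sorting these competitions according to the orderings of $e_B/e_A$ against $1$, $(e_A+c_A)/c_B$, $c_Bc_A/(e_Be_A)$, $1+c_Bc_A/(e_Be_A)$ and $e_A/(e_A-c_A)$ produces exactly the piecewise formulas stated, the parenthetical signs following from $\delta_T>1$, $\theta_T>0$ and $\nu_T>0$. The explicit matrix products and the verification of Lemma~\ref{lem:entries} I would relegate to the appendix.
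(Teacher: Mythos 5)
Your overall architecture --- check (i)--(iii) for the three return maps, invoke Lemma~\ref{lem:entries} to reduce the hypotheses in (b), verify $U^{-\infty}(\M^{(j)})=\R_-^3$, then evaluate $F^{\ind}$ on the negative-entry rows of $\M_j$, $\M_{(j+1,j)}$ and $\M^{(j)}$ and minimise --- is exactly the paper's. But two of your intermediate steps would fail as written. First, the seven scalars $\delta_T,\alpha_T,\dots,\nu_T$ do \emph{not} arise from Lemma~10 of \cite{Podvigina2013} applied to the characteristic polynomial of $\M^{(1)}$. That matrix is lower triangular with diagonal $(\delta_T,1,1)$, so its characteristic polynomial is $(\delta_T-\lambda)(1-\lambda)^2$ and carries no information beyond $\delta_T$. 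The remaining six quantities enter through condition (iii): they are the components of the dominant eigenvectors $\boldsymbol{w}^{\max,1}=(\delta_T-1,\alpha_T,\beta_T)^\text{T}$, $\boldsymbol{w}^{\max,2}=(\delta_T-1,\gamma_T,\theta_T)^\text{T}$ and $\boldsymbol{w}^{\max,3}=(\mu_T,\delta_T-1,\nu_T)^\text{T}$ of the three (similar but distinct) return maps, and the sign condition must be checked for each $j$ separately. This also invalidates your claim in part (a) that a reversed inequality makes ``$\M^{(1)}$, and therefore every $\M^{(j)}$,'' fail: for instance $\gamma_T<0$ leaves $\M^{(1)}$ untouched and breaks only condition (iii) for $\M^{(2)}$. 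The conclusion still follows because Proposition~\ref{prop:calc-index}(a) needs only one failing $j$, but the mechanism you state is wrong.

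Second, your verification of $U^{-\infty}$ transplants the Rock-to-Paper argument (``$\lambda_{\max}$ is the unique eigenvalue of positive real part,'' via Vieta) to a spectrum $\{\delta_T,1,1\}$ in which \emph{all} eigenvalues are positive, so that argument is unavailable here. The paper instead computes the relevant left eigenvectors explicitly --- $\boldsymbol{v}^{\max,1}=\boldsymbol{v}^{\max,2}=\left(\tfrac{1}{\delta_T-1},0,0\right)^\text{T}$ and $\boldsymbol{v}^{\max,3}=\left(0,\tfrac{1}{\delta_T-1},0\right)^\text{T}$ --- and observes that their components are non-negative, whence $\left(\boldsymbol{v}^{\max,j}\right)^\text{T}\boldsymbol{y}<0$ for every $\boldsymbol{y}\in\R_-^3$. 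With these two repairs your plan coincides with the paper's proof; the concluding case analysis you describe (discarding the rows on which $F^{\ind}=+\infty$ and comparing the surviving branches against the thresholds $1$, $(e_A+c_A)/c_B$, $c_Bc_A/(e_Be_A)$ and $e_A/(e_A-c_A)$) is carried out in Appendix~\ref{subsec:proof} exactly as you outline.
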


\begin{proof}
This follows by the same method of the proof of Proposition~\ref{prop:ind-R-to-P}, see Appendix~\ref{subsec:proof} where Lemma~\ref{lem:entries} explains why some of the quantities do not appear in statement (b).
\end{proof} 

\paragraph{The Four-node sub-cycle:}
The Four-node sub-cycle comprises four equilibria
and four 1-dimensional heteroclinic connections 
in the order, see Figure~\ref{fig:cycles}(d):

\[
\Sigma_\text{4-node}=[\xi_1 \rightarrow \xi_2 \rightarrow \xi_5 \rightarrow \xi_3 \rightarrow \xi_1].
\]
The four basic transition matrices between consecutive equilibria are\footnote{Based on the type of heteroclinic connection, we have the following correspondence: $\widehat{\M}_1 = \M_1$, $\widehat{\M}_2 = \M_3$, $\widehat{\M}_3 = \widehat{\M}_5 = \M_4$.}
\[
\widehat{M}_1 :\widehat{H}_1^{\inn} \rightarrow \widehat{H}_2^{\inn}, \qquad \widehat{M}_2 :\widehat{H}_2^{\inn} \rightarrow \widehat{H}_5^{\inn}, \qquad \widehat{M}_5 :\widehat{H}_5^{\inn} \rightarrow \widehat{H}_3^{\inn}, \qquad \widehat{M}_3 :\widehat{H}_3^{\inn} \rightarrow \widehat{H}_1^{\inn}
\]
where
\[
\widehat{\M}_1 = \begin{bmatrix}
\dfrac{c_B}{e_A} & 0 & 0 \\[0.5cm]
\dfrac{c_A}{e_A} & 0 & 1 \\[0.5cm]
-\dfrac{e_B}{e_A} & 1 & 0
\end{bmatrix}, 
\quad \widehat{\M}_2 = \begin{bmatrix}
0 &\dfrac{c_A}{e_B} & 0  \\[0.5cm]
1 & -\dfrac{e_A}{e_B} & 0 \\[0.5cm]
0 & \dfrac{c_B}{e_B} & 1  
\end{bmatrix},
\quad   \widehat{\M}_3 = \widehat{\M}_5 = \begin{bmatrix}
0 & \dfrac{c_A}{e_B} & 1 \\[0.5cm]
1 & -\dfrac{e_A}{e_B} & 0 \\[0.5cm]
0 & \dfrac{c_B}{e_B} & 0
\end{bmatrix}. 
\]
The products of the basic transition matrices with respect to the Four-node sub-cycle near~$\xi_1$ are
$\widehat{\M}_{(2,1)} =  \widehat{\M}_2 \widehat{\M}_1    :  \widehat{H}_1^{\inn} \rightarrow \widehat{H}_{5}^{\inn}$, $\widehat{\M}_{(5,1)}  = \widehat{\M}_5 \widehat{\M}_2 \widehat{\M}_1    :  \widehat{H}_1^{\inn} \rightarrow \widehat{H}_{3}^{\inn}$ and $\widehat{\M}^{(1)}  =  \widehat{\M}_3 \widehat{\M}_5 \widehat{\M}_2 \widehat{\M}_1   :   \widehat{H}_1^{\inn}  \rightarrow \widehat{H}_1^{\inn}$.
In the same manner we obtain the products near $\xi_j$, $j=2,3,5$. All transition matrix products can be found in Appendix~\ref{app:4node}.

The next result makes use of notation introduced in Appendix~\ref{subsec:proof4nodes}.
\begin{proposition} \label{prop:index-4node}
The local stability indices for the Four-node sub-cycle $\Sigma_\text{4-node}$ are:
\begin{enumerate}
\item[(a)] if either $\alpha_{11} + \alpha_{33} < \min \left\{ 2, 1 + \frac{c_B^3 c_A}{e_B^3 e_A}\right\}$ or 
$\theta_T > 0$ or $\nu_T > 0$
or $w_2^{\max,1}<0$ or $w_3^{\max,2}<0$ or $w_3^{\max,5}<0$ or $w_1^{\max,3}<0$, then $\sigma_{31} = \sigma_{12} =\sigma_{25} = \sigma_{53} = -\infty$.
\item[(b)] if $\alpha_{11} + \alpha_{33} > \min \left\{ 2, 1 + \frac{c_B^3 c_A}{e_B^3 e_A}\right\}$ and 
$\theta_T < 0$ and 
$\nu_T < 0$
and $w_3^{\max,2}>0$, then
\begin{align*}
 -\infty <  \sigma_{31} & \leq F^\textnormal{index}\left(-\frac{e_B}{e_A},1,0 \right) \\
 -\infty <  \sigma_{12},  \  \sigma_{25}, \  \sigma_{53} &  \leq F^\textnormal{index}\left(1,-\frac{e_A}{e_B},0 \right). 
\end{align*}
\end{enumerate}
\end{proposition}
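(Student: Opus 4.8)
The plan is to mirror the structure of Propositions~\ref{prop:ind-R-to-P} and~\ref{prop:ind-Star}, but now applied to the return map $\widehat{\M}^{(1)} = \widehat{\M}_3 \widehat{\M}_5 \widehat{\M}_2 \widehat{\M}_1$ of the Four-node sub-cycle, invoking Proposition~\ref{prop:calc-index}. The first step is to compute $\widehat{\M}^{(1)}$ explicitly (deferred to Appendix~\ref{app:4node}) and examine conditions (i)--(iii) for its maximal eigenvalue. Since the cycle is $\Z_5$-symmetric only up to the choice of sub-cycle and the four constituent matrices are \emph{not} all conjugate, I would verify that the return maps $\widehat{\M}^{(j)}$ at the four distinct nodes share the same characteristic polynomial (they are cyclic products of the same factors, hence conjugate), so that conditions (i)--(iii) hold simultaneously for all $j$ or fail simultaneously. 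The trace and principal minors of $\widehat{\M}^{(1)}$ should reproduce, via Lemma~10 of~\cite{Podvigina2013}, the three scalar inequalities appearing in the statement: the condition on $\alpha_{11}+\alpha_{33}$ (from the trace-type condition (i), rewritten using the notation of Appendix~\ref{subsec:proof4nodes}), and the conditions $\theta_T<0$ and $\nu_T<0$ (from the sign conditions (ii) on the second symmetric function and (iii) on the product, which naturally recycle the quantities $\theta_T$ and $\nu_T$ already introduced for the RSP cycle, since the Four-node cycle shares the matrices $\widehat{\M}_1=\M_1$, $\widehat{\M}_2=\M_3$).

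For part~(a), the strategy is that the failure of \emph{any one} of the three inequalities, or the negativity of any relevant eigenvector component $w^{\max,j}_q$, means condition (i)--(iii) fails for at least one $\widehat{\M}^{(j)}$; then Proposition~\ref{prop:calc-index}(a) forces $\sigma_j=-\infty$ for every $j$, giving complete instability. The slightly delicate bookkeeping here is that the eigenvector sign condition (iii) must be checked at each of the four nodes separately, which is why the statement lists four separate component conditions $w^{\max,1}_2$, $w^{\max,2}_3$, $w^{\max,5}_3$, $w^{\max,3}_1$ rather than a single one --- the sign of the leading eigenvector is not preserved under the (non-symmetric) change of incoming cross-section.

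For part~(b), assuming the three inequalities hold together with $w^{\max,2}_3>0$, the plan is to establish that $U^{-\infty}(\widehat{\M}^{(j)})=\R_-^3$ as in~\eqref{eq:U-infty}, by showing the dominant left-eigenvector $\boldsymbol{v}^{\max}$ has all positive entries (so its inner product with any $\boldsymbol{y}\in\R_-^3$ is negative, exactly as in the Rock-to-Paper proof). This licenses Proposition~\ref{prop:calc-index}(b), so each $\sigma_j$ is a minimum of $F^{\ind}$ over the rows of the partial products $\widehat{\M}_{(l,j)}$. I would then identify, for each node, which row yields the governing upper bound: for the connection into $\xi_1$ the binding row is $\left(-\frac{e_B}{e_A},1,0\right)$ (inherited from $\widehat{\M}_1=\M_1$, as in the R-to-P case), while for the three connections into $\xi_2,\xi_5,\xi_3$ the binding row is $\left(1,-\frac{e_A}{e_B},0\right)$ (inherited from the $\M_4$-type matrices, as in the Star case). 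The main obstacle I anticipate is the same one that forces the results to be upper bounds rather than equalities: one must confront the full list of rows of $\widehat{\M}_{(l,j)}$ for $l$ ranging over the cycle and show, using the standing inequalities, that no other row produces a smaller value of $F^{\ind}$ than the claimed dominant one --- equivalently, checking that the remaining rows are either componentwise nonnegative (giving $F^{\ind}=+\infty$) or yield a larger index. This is the point where the structural similarity to the RSP and Star arguments must be leveraged to avoid recomputing every product by hand, and I would organise it by grouping the partial-product rows according to which of the two matrix types ($\M_1$-type versus $\M_4$-type) last acted, relegating the explicit entries to Appendix~\ref{app:4node} and the sign analysis to Appendix~\ref{subsec:proof4nodes}.
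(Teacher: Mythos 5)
Your overall skeleton matches the paper's: verify conditions \emph{(i)}--\emph{(iii)} for the return maps $\widehat{\M}^{(j)}$, invoke Proposition~\ref{prop:calc-index}, confirm \eqref{eq:U-infty} via the left eigenvectors, and read off upper bounds from rows of the partial products. But the mechanism you propose for producing the three scalar conditions is wrong, and that is where the real content of the proof lies. Applying Lemma~10 of \cite{Podvigina2013} to the characteristic polynomial of $\widehat{\M}^{(1)}$ cannot yield $\theta_T<0$ and $\nu_T<0$: the determinant of $\widehat{\M}^{(1)}$ equals $c_B^3c_A/(e_B^3e_A)>0$ and the second elementary symmetric function of its eigenvalues is likewise always positive, so the product and second-symmetric-function conditions are vacuous here. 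In the paper those two inequalities come from condition \emph{(iii)}, via the entry identities $\alpha_{13}=-\tfrac{e_A^2}{e_B^2}\theta_T$ and $\gamma_{12}=-\tfrac{c_Ae_A}{e_B^2}\nu_T$ in \eqref{eq:alpha13}--\eqref{eq:gamma12}: the signs of these off-diagonal entries decide whether the components of $\boldsymbol{w}^{\max,j}$ share a sign. The same identities give $\alpha_{13}\alpha_{31}=\tfrac{e_A^4}{e_B^4}\theta_T^2\ge 0$, which is how condition \emph{(i)} (realness) is disposed of; and the trace-type condition on $\alpha_{11}+\alpha_{33}$ is condition \emph{(ii)}, not \emph{(i)}. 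Without these identities your argument does not actually connect the hypotheses of the proposition to conditions \emph{(i)}--\emph{(iii)}.

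Two further points. First, you do not explain why part (b) assumes only $w_3^{\max,2}>0$ while part (a) lists four eigenvector conditions; the paper closes this by relating the eigenvectors through the transition matrices (e.g.\ $\boldsymbol{w}^{\max,2}=\widehat{\M}_1\boldsymbol{w}^{\max,1}$) and deriving explicit expressions showing that $w_3^{\max,2}>0$ forces $w_2^{\max,1}>0$, $w_3^{\max,5}>0$ and $w_1^{\max,3}>0$. Without that chain, part (b) as stated is not proved. Second, the ``main obstacle'' you anticipate is not one: since $\sigma_j$ is the minimum of $F^{\ind}$ over all relevant rows, the stated upper bound follows from evaluating $F^{\ind}$ at the single displayed row; comparing against the remaining rows would only be needed to upgrade the bound to an equality, which the proposition does not claim.
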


\begin{proof}
This follows by the same method of the proof of Proposition~\ref{prop:ind-R-to-P}, see Appendix~\ref{subsec:proof4nodes}.
\end{proof} 

It is clear that the Four-node cycle cannot be e.a.s. In fact, 
$$
F^\textnormal{index}\left(-\frac{e_B}{e_A},1,0 \right) \cdot F^\textnormal{index}\left(1,-\frac{e_A}{e_B},0 \right) = -\left(1-\frac{e_A}{e_B}\right)^{2} < 0
$$ 
so that the stability indices cannot all be positive.

Regardless of the stability exhibited by the heteroclinic network as a whole, as expected, not all cycles can be simultaneously stable.

\begin{lemma}\label{lem:no2stable}
At most one of the 5-node cycles in the RSPLS network is f.a.s.
At most either the 3-node sub-cycle or the 4-node sub-cycle is f.a.s.
Furthermore, if the sub-cycle $\Sigma_\text{R-to-P}$ is f.a.s. then the sub-cycle $\Sigma_\text{4-node}$ is c.u.
\end{lemma}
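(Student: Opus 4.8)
The plan is to prove each of the three assertions in Lemma~\ref{lem:no2stable} separately, in each case by deriving the stated stability properties from incompatible conditions on the parameters $c_A, c_B, e_A, e_B$ via Propositions~\ref{prop:ind-R-to-P}--\ref{prop:index-4node}. The unifying strategy is that f.a.s.\ of a quasi-simple (sub-)cycle requires, by Proposition~\ref{prop:calc-index}(b), that all conditions \emph{(i)}--\emph{(iii)} hold for the relevant return matrices; these conditions translate into explicit polynomial inequalities in the eigenvalue ratios. To show two cycles cannot both be f.a.s., I would exhibit a single parameter inequality that is forced to hold for one cycle's f.a.s.\ but whose reverse is forced by the other's.

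For the first assertion, the two $5$-node cycles are $\Sigma_\text{R-to-P}$ and $\Sigma_\text{Star}$. Comparing Proposition~\ref{prop:ind-R-to-P}(b) and Proposition~\ref{prop:ind-Star}(b), f.a.s.\ of $\Sigma_\text{R-to-P}$ requires $c_A e_A > c_B e_B$ while f.a.s.\ of $\Sigma_\text{Star}$ requires $c_B e_B > c_A e_A$; these are mutually exclusive, so at most one holds. (The equality case $c_A e_A = c_B e_B$ violates the strict inequality required in both, so neither is f.a.s.\ then.) For the second assertion, f.a.s.\ of $\Sigma_\text{RSP}$ requires, by Proposition~\ref{prop:index-RSP}\ref{RSPb}, that $\theta_T > 0$ and $\nu_T > 0$, whereas f.a.s.\ of $\Sigma_\text{4-node}$ requires, by Proposition~\ref{prop:index-4node}(b), that $\theta_T < 0$ and $\nu_T < 0$. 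Each pair of conditions directly contradicts the other, so the two sub-cycles cannot be simultaneously f.a.s.

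For the third assertion I would argue by contrapositive on the shared conditions. If $\Sigma_\text{R-to-P}$ is f.a.s., then Proposition~\ref{prop:ind-R-to-P}(b) gives $c_A e_A > c_B e_B$, equivalently $\frac{c_A}{e_B} > \frac{c_B}{e_A}$ after rearranging, and also $c_A + c_B > e_A + e_B$. The aim is to deduce that one of the c.u.\ conditions in Proposition~\ref{prop:index-4node}(a) must hold — most naturally that $\theta_T > 0$ or $\nu_T > 0$, since these are exactly the negations of the f.a.s.\ requirements for $\Sigma_\text{4-node}$. I expect the main obstacle to be showing that the inequalities guaranteeing f.a.s.\ of $\Sigma_\text{R-to-P}$ force $\theta_T > 0$ or $\nu_T > 0$: one must manipulate $\theta_T = -\frac{c_A^2}{e_A^2} + \frac{c_B}{e_A} - \frac{c_A}{e_B}$ and $\nu_T = -\frac{c_B c_A}{e_A^2} + \frac{c_A^2}{e_A e_B} + \frac{c_B}{e_B}$ and show they cannot both be non-positive under $c_A e_A > c_B e_B$ and $c_A + c_B > e_A + e_B$. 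A clean route is to assume for contradiction that $\theta_T \le 0$ and $\nu_T \le 0$ and combine these two with the f.a.s.\ inequalities to reach a contradiction, for instance by forming a suitable positive linear combination of $\theta_T$ and $\nu_T$ that the f.a.s.\ conditions render strictly positive. Once either $\theta_T > 0$ or $\nu_T > 0$ is established, Proposition~\ref{prop:index-4node}(a) applies and yields $\sigma_{31} = \sigma_{12} = \sigma_{25} = \sigma_{53} = -\infty$, so $\Sigma_\text{4-node}$ is c.u., completing the proof.
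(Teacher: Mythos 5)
Your treatment of the first two assertions is correct and coincides with the paper's argument: fragmentary asymptotic stability of $\Sigma_\text{R-to-P}$ forces $c_A e_A > c_B e_B$ while that of $\Sigma_\text{Star}$ forces the reverse strict inequality (Propositions~\ref{prop:ind-R-to-P} and~\ref{prop:ind-Star}), and the sign conditions on $\theta_T$ and $\nu_T$ required for $\Sigma_\text{RSP}$ and $\Sigma_\text{4-node}$ to escape complete instability (Propositions~\ref{prop:index-RSP} and~\ref{prop:index-4node}) are mutually exclusive.

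The third assertion, however, is left with a genuine gap: you correctly identify that the task is to deduce $\theta_T>0$ or $\nu_T>0$ from the f.a.s.\ conditions for $\Sigma_\text{R-to-P}$, but you never carry out the deduction --- you only announce that you ``expect the main obstacle'' to be this step and sketch a proof by contradiction via ``a suitable positive linear combination'' of $\theta_T$ and $\nu_T$, without producing one. That deduction is precisely the substance of the claim. Moreover, the machinery you propose is heavier than needed: no combination of $\theta_T$ and $\nu_T$, and no appeal to $c_A+c_B>e_A+e_B$, is required, because $\nu_T$ alone is already positive under $c_Ae_A>c_Be_B$. Writing
\[
\nu_T \;=\; -\frac{c_B c_A}{e_A^2} + \frac{c_A^2}{e_A e_B} + \frac{c_B}{e_B}
\;=\; \frac{c_A\left(c_A e_A - c_B e_B\right) + e_A^2 c_B}{e_A^2 e_B},
\]
one sees immediately that $c_A e_A>c_B e_B$ --- which holds whenever $\Sigma_\text{R-to-P}$ is f.a.s., by Proposition~\ref{prop:ind-R-to-P}(b) --- gives $\nu_T>0$, and Proposition~\ref{prop:index-4node}(a) then yields $\sigma_{31}=\sigma_{12}=\sigma_{25}=\sigma_{53}=-\infty$, i.e.\ complete instability of $\Sigma_\text{4-node}$. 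This one-line identity (recorded in the paper as~\eqref{eq:bn-T}) is exactly how the paper closes the argument; until you supply it, or an equivalent computation, the third assertion is not proved.
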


\begin{proof}
The stability is obtained by using Lemma 2.5 in \cite{GarCas2019} to relate a finite stability index to f.a.s.
It is clear from Propositions~\ref{prop:ind-R-to-P} and \ref{prop:ind-Star} that the sufficient condition $c_A e_A < c_B e_B$ for the sub-cycle $\Sigma_\text{R-to-P}$ to be c.u.\ is satisfied when the cycle $\Sigma_\text{Star}$ has a finite stability index. Analogously, the sufficient condition $c_B e_B < c_A e_A$ for the cycle $\Sigma_\text{Star}$ to be c.u.\ is satisfied when the sub-cycle $\Sigma_\text{R-to-P}$ has a finite stability index.

The conditions on the sign of $\theta_T$ and $\nu_T$ for the cycles $\Sigma_\text{RSP}$ and $\Sigma_\text{4-node}$ are exclusive. Hence, at most one of these two cycles is f.a.s.

The condition $c_B e_B < c_A e_A$ that is satisfied if $\Sigma_\text{R-to-P}$ is f.a.s. guarantees that $\nu_T>0$ and therefore $\Sigma_\text{4-node}$ is c.u.
\end{proof}

\subsection{Stability of cycles in an asymptotically stable network}\label{sec:net_as}

From now on, we assume that the sufficient condition for the asymptotic stability (a.s.) of the whole heteroclinic network holds.
From Proposition~\ref{prop:stability} we restrict the parameter space to 
\begin{equation}\label{hyp:as}
0<e_B< e_A<\min\{c_A,c_B\} \quad  \mbox{  and    } \quad e_A\le1.
\end{equation}

\begin{lemma}
\label{lem:relations_as}
The following relations hold:
\begin{enumerate}
\renewcommand{\theenumi}{\alph{enumi}}
\renewcommand{\labelenumi}{({\theenumi})}
\item \label{item:16satisfied}
if \eqref{hyp:as} is satisfied, then
\begin{enumerate}
\renewcommand{\theenumii}{(\theenumi\arabic{enumii})}
\renewcommand{\labelenumii}{{\theenumii}}
	\item $\delta_T >1$
	, $\beta_T>0$ 
	and $\gamma_T>0$; 
	\item $c_A+c_B > e_A+e_B$; 
	\item $c_Ac_B^3 > e_Ae_B^3$. 
\end{enumerate}
\item if either 
$\alpha_T>0$ or 
$\theta_T>0$ or 
$\beta_T<0$ or 
$\nu_T<0$, then $c_A e_A < c_B e_B$. 
\item if $\theta_T>0$ and $\nu_T>0$, then $c_A^3e_B < c_B e_A^3$. Otherwise, if $\theta_T<0$ and $\nu_T<0$, then $c_A^3e_B > c_B e_A^3$.
\end{enumerate}
\end{lemma}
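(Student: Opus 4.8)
The plan is to prove each of the three claims by direct computation, exploiting the explicit definitions of the auxiliary quantities $\delta_T$, $\alpha_T$, $\beta_T$, $\gamma_T$, $\theta_T$, $\mu_T$, $\nu_T$ given just before Proposition~\ref{prop:index-RSP}, together with the standing hypothesis~\eqref{hyp:as}. Throughout I would set $e_A \le 1$ and use the chain $0 < e_B < e_A < \min\{c_A,c_B\}$ freely; in particular each of $c_A$ and $c_B$ exceeds both $e_A$ and $e_B$, and all seven quantities are rational functions of the four parameters with positive denominators (powers of $e_A$ and $e_B$), so every inequality reduces to a polynomial inequality that I can certify by grouping terms.

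For part~\ref{item:16satisfied} I would argue as follows. Since $\delta_T = c_A^2 c_B/(e_A^2 e_B)$ and each numerator factor dominates the corresponding denominator factor under~\eqref{hyp:as} (indeed $c_A > e_A$ and $c_B > e_B$), the product exceeds $1$, giving $\delta_T > 1$. For $\beta_T = c_B^2 c_A/(e_A^2 e_B) - e_B c_B/e_A^2 + c_A/e_A$, clearing the denominator $e_A^2 e_B$ turns positivity into $c_B^2 c_A - e_B^2 c_B + c_A e_A e_B > 0$; since $c_B^2 c_A > e_B^2 c_B$ follows from $c_A > e_B^2/c_B$ (itself a consequence of $c_A > e_A > e_B > e_B^2/c_B$), and the remaining term $c_A e_A e_B$ is positive, the inequality holds. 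The same clearing trick handles $\gamma_T$, $(b2)$~$c_A + c_B > e_A + e_B$ (which is immediate termwise from $c_A > e_A$ and $c_B > e_B$), and $(b3)$~$c_A c_B^3 > e_A e_B^3$ (immediate from $c_A > e_A$ and $c_B^3 > e_B^3$).

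Parts~(b) and~(c) are the implications with more structure. For~(b) I would show the contrapositive-style chain: each of the four hypotheses $\alpha_T > 0$, $\theta_T > 0$, $\beta_T < 0$, $\nu_T < 0$ forces $c_A e_A < c_B e_B$. The cleanest route is to verify, for each quantity, an algebraic identity or inequality that ties its sign to the sign of $c_B e_B - c_A e_A$; for instance, from $\theta_T = -c_A^2/e_A^2 + c_B/e_A - c_A/e_B > 0$ one multiplies by $e_A^2 e_B > 0$ and rearranges to isolate a positive multiple of $(c_B e_B - c_A e_A)$. The delicate point is that two of these, $\beta_T < 0$ and $\nu_T < 0$, run against the positivity already established in part~\ref{item:16satisfied} under the full hypothesis~\eqref{hyp:as}, so their role here is to be \emph{vacuously compatible} or to deliver the conclusion through a one-sided estimate; I would treat each case separately and record the governing polynomial inequality. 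For~(c) the key observation is that $\theta_T$ and $\nu_T$ are the two quantities whose product or sum controls the sign of $c_B e_A^3 - c_A^3 e_B$: I would compute $\theta_T$ and $\nu_T$ explicitly, clear denominators, and exhibit the combination (a suitable positive-coefficient linear combination of the cleared forms) that equals, up to a positive factor, $c_B e_A^3 - c_A^3 e_B$, so that same-sign hypotheses on $\theta_T,\nu_T$ pin down its sign.

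The main obstacle I anticipate is part~(c): unlike~(b), where each hypothesis acts individually, here the conclusion depends on $\theta_T$ and $\nu_T$ having the \emph{same} sign, which suggests the target quantity $c_B e_A^3 - c_A^3 e_B$ is not a single-signed multiple of either $\theta_T$ or $\nu_T$ alone but of a genuine combination. Finding the correct combination — most likely a weighted sum whose cross terms cancel to leave exactly $c_B e_A^3 - c_A^3 e_B$ times a manifestly positive quantity — is the computational heart of the argument, and I would look for it by clearing both to the common denominator $e_A^2 e_B$ and matching the cubic monomials in $c_A, c_B, e_A, e_B$. Once that identity is in hand, both directions of~(c) follow at once from the sign hypotheses, and all the remaining verifications are routine polynomial bookkeeping under~\eqref{hyp:as}.
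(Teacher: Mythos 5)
Your proposal is correct and follows essentially the same route as the paper: part (a) by termwise comparison and regrouping of the numerators of $\beta_T$ and $\gamma_T$ around the positive quantity $c_Ac_B-e_B^2$, part (b) by rewriting each numerator as a definite-sign term plus a multiple of $(c_Ae_A-c_Be_B)$, and part (c) via the cancellation $c_A\theta_T+e_A\nu_T=(c_Be_A^3-c_A^3e_B)/(e_A^2e_B)$, which is exactly what the paper exploits (there phrased as sandwiching the common middle term $c_Ac_Be_Ae_B-e_A^2c_A^2$ between $c_A^3e_B$ and $c_Be_A^3$). One small correction to a side remark: part (a) establishes only $\beta_T>0$ and $\gamma_T>0$, not $\nu_T>0$, and $\nu_T<0$ genuinely occurs under \eqref{hyp:as} (e.g.\ $e_A=1$, $e_B=0.8$, $c_A=2$, $c_B=7$), so the hypothesis $\nu_T<0$ in (b) is not vacuous --- but your fallback of a one-sided estimate on the rewritten numerator handles it exactly as the paper does.
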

\begin{proof}
In (a), given \eqref{hyp:as}, it is immediate that $\delta_T>1$, $c_A+c_B > e_A + e_B$ and $c_A c_B^3 > e_A e_B^3$. Write

\begin{align*}
\beta_T = \frac{c_B(c_Bc_A -e_B^2) + c_A e_A e_B}{e_A^2 e_B} &  \quad \text{ and } \quad  \gamma_T = \frac{c_A^3 + e_A ( c_Bc_A -e_B^2)}{e_A^2 e_B}.
\intertext{Now $c_Bc_A - e_B^2>0$ from \eqref{hyp:as} and the signs of $\beta_T$ and $\gamma_T$ are respectively given by}
c_B(c_Bc_A -e_B^2) + c_A e_A e_B>0 &  \quad \text{ and }  \quad  c_A^3 + e_A ( c_Bc_A -e_B^2) >0.
\end{align*}

We establish (b) by expressing 

\begin{align}
\alpha_T & =  \frac{-c_B(c_Ae_A-c_Be_B) -e_B^2e_A}{e_A^2e_B}, &
\theta_T & = \frac{-c_A^2e_B - e_A(c_Ae_A- c_Be_B)}{e_A^2e_B}, \label{eq:ag-T} \\
\beta_T& = \frac{c_B^2c_A + e_B(c_A e_A-c_Be_B)}{e_A^2 e_B}, & 
\nu_T & = \frac{c_A(c_A e_A-c_Be_B) + e_A^2c_B}{e_A^2 e_B}. \label{eq:bn-T}
\end{align}

In (c), observe that
\begin{align*}
\theta_T > 0 & \  \Leftrightarrow \ c_A(-c_A^2e_B+ c_B e_A e_B-e_A^2 c_A)  > 0  &  \  \Leftrightarrow \ c_A c_B e_A e_B - e_A^2 c_A^2 > c_A^3 e_B, \\[0.2cm] %\label{eq:theta>0}
\nu_T > 0 & \  \Leftrightarrow \ e_A(-c_Ac_Be_B + c_A^2 e_A + e_A^2c_B)  > 0  &  \  \Leftrightarrow \ c_Be_A^3>c_A c_B e_A e_B - e_A^2 c_A^2. %\label{eq:nu>0}
\end{align*}
Accordingly, $ c_A^3 e_B < c_A c_B e_A e_B - e_A^2 c_A^2 < c_Be_A^3.$
The second statement is immediate by reversing the direction of the above inequalities.
\end{proof}

The hypotheses in Propositions~\ref{prop:ind-R-to-P}--\ref{prop:index-RSP} can be simplified in view of the previous lemma. 
We obtain the following more specific results concerning the stability regions of the three sub-cycles,
illustrated in Figures~\ref{fig:stability} and \ref{fig:stabilityAll}.

\begin{proposition}\label{prop:stab_as}
Let \eqref{hyp:as} be satisfied. Then:
\begin{enumerate}
\renewcommand{\theenumi}{\alph{enumi}}
\renewcommand{\labelenumi}{({\theenumi})}
	\item
	 the sub-cycle $\Sigma_\text{R-to-P}$ is
	\begin{enumerate}
	\renewcommand{\theenumii}{(\theenumi\arabic{enumii})}
	\renewcommand{\labelenumii}{{\theenumii}}
		\item  c.u.\ if $c_Ae_A < c_Be_B$;
		\item f.a.s.\ if $c_Ae_A > c_Be_B$;
		\item e.a.s.\ if $c_Ae_A - c_Be_B> e_A e_B$.
	\end{enumerate}
	\item\label{stab_Star} the cycle $\Sigma_\text{Star}$ is
	\begin{enumerate}
	\renewcommand{\theenumii}{(\theenumi\arabic{enumii})}
	\renewcommand{\labelenumii}{{\theenumii}}
		\item  c.u.\ if either $c_Be_B<c_Ae_A$ or $c_A^3e_B < c_B e_A^3$;
		\item  f.a.s.\ if $c_Be_B>c_Ae_A$ and $c_A^3e_B > c_B e_A^3$;.
	\end{enumerate}
	\item\label{RSPfas}  the sub-cycle $\Sigma_\text{RSP}$ is
	\begin{enumerate}
	\renewcommand{\theenumii}{(\theenumi\arabic{enumii})}
	\renewcommand{\labelenumii}{{\theenumii}}
		\item  c.u. if either $\alpha_T<0$, or $\theta_T<0$, or $\mu_T<0$, or $\nu_T<0$;
		\item  f.a.s.\ if $\theta_T>0$ and $\nu_T>0$.
	\end{enumerate}
\end{enumerate}
\end{proposition}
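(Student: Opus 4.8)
The overall plan is to read off each claim from the general stability criteria already established --- Proposition~\ref{prop:ind-R-to-P} for $\Sigma_\text{R-to-P}$, Proposition~\ref{prop:ind-Star} for $\Sigma_\text{Star}$ and Proposition~\ref{prop:index-RSP} for $\Sigma_\text{RSP}$ --- after using Lemma~\ref{lem:relations_as} to discard the hypotheses that become automatic once \eqref{hyp:as} is imposed. Throughout I use the standard correspondence for quasi-simple cycles: a stability index equal to $-\infty$ signals complete instability, a finite index entails f.a.s.\ (Lemma~2.5 in \cite{GarCas2019}), and a strictly positive index entails e.a.s.

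For part~(a), Lemma~\ref{lem:relations_as}\,(a2)--(a3) guarantee that under \eqref{hyp:as} the inequalities $c_A+c_B>e_A+e_B$ and $c_Ac_B^3>e_Ae_B^3$ always hold, so two of the three dichotomy conditions of Proposition~\ref{prop:ind-R-to-P} are fixed and only the sign of $c_Ae_A-c_Be_B$ remains free. If $c_Ae_A<c_Be_B$, the second complete-instability condition of Proposition~\ref{prop:ind-R-to-P}\,(a) is met, forcing $\sigma_\text{R-to-P}=-\infty$ and hence (a1); if $c_Ae_A>c_Be_B$ all three hypotheses of Proposition~\ref{prop:ind-R-to-P}\,(b) hold, the index is finite, and (a2) follows.

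The one genuinely new point is the e.a.s.\ claim (a3), where a finite index is not enough and I must show $\sigma_\text{R-to-P}>0$. By Proposition~\ref{prop:calc-index}\,(b) the (common) index is the minimum of $F^{\ind}$ over the rows with a negative entry of $\M_2,(\M_2)^2,\ldots,(\M_2)^5$. The plan is to locate the binding row, which is the third row of $(\M_2)^2$, namely $\left(\frac{c_A}{e_A}-\frac{e_Bc_B}{e_A^2},\,0,\,-\frac{e_B}{e_A}\right)$; its entries sum to $(c_Ae_A-c_Be_B-e_Ae_B)/e_A^2$, and since its minimal entry $-e_B/e_A$ is negative while (under $c_Ae_A>c_Be_B$) its first entry is positive, the value $F^{\ind}$ of this row is strictly positive exactly when the sum is positive, i.e.\ when $c_Ae_A-c_Be_B>e_Ae_B$. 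The main obstacle is precisely the verification that this row attains the minimum among all the relevant rows of the higher powers $(\M_2)^k$, so that the threshold in (a3) is indeed where $\sigma_\text{R-to-P}$ crosses zero; granting this, positivity of the index yields e.a.s.

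Parts (b) and (c) follow the same template and require no e.a.s.\ refinement. For $\Sigma_\text{Star}$, Lemma~\ref{lem:relations_as}\,(a2) again removes the condition $c_A+c_B>e_A+e_B$ from Proposition~\ref{prop:ind-Star}, so the signs of $c_Be_B-c_Ae_A$ and $c_A^3e_B-c_Be_A^3$ decide directly between the c.u.\ case (b1) and the finite-index, hence f.a.s., case (b2). For $\Sigma_\text{RSP}$, Lemma~\ref{lem:relations_as}\,(a1) provides $\delta_T>1$, $\beta_T>0$ and $\gamma_T>0$, so the only surviving complete-instability conditions of Proposition~\ref{prop:index-RSP}\,(a) are $\alpha_T<0$, $\theta_T<0$, $\mu_T<0$ and $\nu_T<0$, which is exactly (c1). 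For (c2) I would invoke the complementarity recorded just before Proposition~\ref{prop:index-RSP} (Lemma~\ref{lem:entries}): combined with $\delta_T>1$ from \eqref{hyp:as}, the assumptions $\theta_T>0$ and $\nu_T>0$ force the remaining quantities $\alpha_T,\beta_T,\gamma_T,\mu_T$ to be positive as well, so Proposition~\ref{prop:index-RSP}\,(b) applies, all indices are finite, and f.a.s.\ follows.
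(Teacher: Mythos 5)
Your handling of (a1), (a2), (b) and (c) coincides with the paper's own argument: Lemma~\ref{lem:relations_as} strips away the hypotheses of Propositions~\ref{prop:ind-R-to-P}, \ref{prop:ind-Star} and \ref{prop:index-RSP} that become automatic under \eqref{hyp:as}, the remaining sign conditions decide between a $-\infty$ index (c.u.) and a finite index (f.a.s.\ via Lemma~2.5 of \cite{GarCas2019}), and for (c2) Lemma~\ref{lem:entries} upgrades $\delta_T>1$, $\theta_T>0$, $\nu_T>0$ to positivity of $\alpha_T,\beta_T,\gamma_T,\mu_T$. None of that needs repair.

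The gap is the step you explicitly grant in (a3). You correctly identify the third row of $(\M_2)^2$ and correctly compute that its $F^{\ind}$ value is positive exactly when $c_Ae_A-c_Be_B>e_Ae_B$, but positivity of one candidate value proves nothing about $\sigma_{\text{R-to-P}}$, which is the \emph{minimum} of $F^{\ind}$ over all rows with a negative entry of $(\M_2)^j$, $j=1,\ldots,5$. What must be shown is that every one of these rows yields a positive value under the hypothesis of (a3). The paper closes this in Appendix~\ref{subsec:stab_cycles_as_network} as follows: under \eqref{hyp:as} the negative entries occur only in the last row of each power $(\M_2)^j$; for a row containing a negative entry the sign of $F^{\ind}$ equals the sign of the sum of its entries; and writing $\mathfrak{s}_j$ for these five row sums one verifies
\[
\mathfrak{s}_2\le\mathfrak{s}_3<\frac{c_Ae_A-c_Be_B}{e_A^2}<\mathfrak{s}_5\le\mathfrak{s}_4,
\qquad
\mathfrak{s}_1=1-\frac{e_B}{e_A}>0 ,
\]
so that the rows of $\M_2$, $(\M_2)^4$ and $(\M_2)^5$ always contribute positive values in the f.a.s.\ regime, and the overall sign is governed by $\mathfrak{s}_2=(c_Ae_A-c_Be_B-e_Ae_B)/e_A^2$ alone (with $\mathfrak{s}_3\ge\mathfrak{s}_2$ taking care of the remaining row). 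This chain of inequalities is the actual mathematical content of (a3) --- it is also what shows the threshold is sharp, since $\mathfrak{s}_2\le 0$ forces a negative index --- and without it your argument for e.a.s.\ is incomplete.
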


\begin{proof} This proof is deferred to Appendix~\ref{subsec:stab_cycles_as_network}.

We use Theorem 3.1 in Lohse \cite{Lohse2015} showing that if all stability indices are positive then the cycle is e.a.s. whereas f.a.s. is obtained from Lemma 2.5 in \cite{GarCas2019}. It follows from its definition in \cite{PodviginaAshwin2011} that a stability index equal to $-\infty$ implies the complete instability of the cycle. 

For the sub-cycle $\Sigma_\text{R-to-P}$ the proof consists in checking which entries in the transition matrices may be negative under the constraint \eqref{hyp:as} and using this information to obtain the stability indices. 
For the other two sub-cycles this can be done directly from Propositions~\ref{prop:ind-Star} and  \ref{prop:index-RSP}.
\end{proof}

\begin{remark}
The conditions for the f.a.s.\ of the sub-cycle $\Sigma_\text{RSP}$ can be written as a function of the eigenvalues as
$$
\frac{c_A^2e_B +e_A^2c_A}{e_A} < c_Be_B < \frac{c_A^2e_A+ e_A^2c_B}{c_A}.
$$
\end{remark}

Hypothesis \eqref{hyp:as} does not provide a complete description of the stability of the Four-node sub-cycle in parameter space. This can, of course, be obtained if values are assigned to all eigenvalues. The next result lists the most general results.

\begin{lemma}\label{lem:4node_as}
Let \eqref{hyp:as} be satisfied.
\begin{enumerate}
\renewcommand{\theenumi}{\alph{enumi}}
\renewcommand{\labelenumi}{({\theenumi})}
	\item  The  sub-cycle $\Sigma_\text{4-node}$ is c.u. if at least one of the following holds
\begin{enumerate}
\renewcommand{\theenumii}{(\theenumi\arabic{enumii})}
	\renewcommand{\labelenumii}{{\theenumii}}
	\item  $c_A>c_B$;
	
	\item  $c_A^3e_B < c_Be_A^3$.
\end{enumerate}
	
	\item  If the sub-cycle $\Sigma_\text{4-node}$ is f.a.s. then so is the cycle $\Sigma_\text{Star}$.
\end{enumerate}
\end{lemma}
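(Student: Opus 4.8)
The plan is to reduce every assertion to the sign conditions on $\theta_T$ and $\nu_T$ that already govern the four-node cycle through Proposition~\ref{prop:index-4node}, and then to feed these into Lemma~\ref{lem:relations_as}. Throughout I would work under \eqref{hyp:as}, so that $0<e_B<e_A<\min\{c_A,c_B\}$ and $e_A\le 1$; note that under these inequalities $\tfrac{c_B^3 c_A}{e_B^3 e_A}>1$, hence $\min\{2,\,1+\tfrac{c_B^3c_A}{e_B^3e_A}\}=2$, so the entry-sum condition in Proposition~\ref{prop:index-4node} never obstructs the argument and I only have to track $\theta_T$ and $\nu_T$. The engine is that, by Proposition~\ref{prop:index-4node}(a), it suffices for complete instability to produce a single positive sign among $\{\theta_T,\nu_T\}$.

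For part~(a1) I would simply compute the sign of $\nu_T$. Clearing the positive denominator $e_A^2e_B$, the quantity $\nu_T$ has the sign of $c_A^2 e_A + c_B e_A^2 - c_A c_B e_B$. Since $e_B<e_A$ one has $c_Ac_Be_B<c_Ac_Be_A$, so this numerator exceeds $e_A\bigl(c_A(c_A-c_B)+c_Be_A\bigr)$, which is strictly positive as soon as $c_A>c_B$. Thus $\nu_T>0$ and $\Sigma_\text{4-node}$ is c.u.\ by Proposition~\ref{prop:index-4node}(a). For part~(a2) I would argue by contradiction using the intermediate quantity $M:=c_Ac_Be_Ae_B-e_A^2c_A^2$ that already appears in the proof of Lemma~\ref{lem:relations_as}(c): there one reads $\theta_T>0\Leftrightarrow M>c_A^3e_B$ and $\nu_T>0\Leftrightarrow c_Be_A^3>M$. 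If both $\theta_T\le0$ and $\nu_T\le0$ held, then $c_Be_A^3\le M\le c_A^3e_B$, forcing $c_Be_A^3\le c_A^3e_B$ and contradicting the hypothesis $c_A^3e_B<c_Be_A^3$. Hence $\theta_T>0$ or $\nu_T>0$, and again Proposition~\ref{prop:index-4node}(a) gives complete instability.

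For part~(b) the plan is a short chain of implications. If $\Sigma_\text{4-node}$ is f.a.s.\ then it is not c.u., so by Proposition~\ref{prop:index-4node}(a) neither $\theta_T>0$ nor $\nu_T>0$ can hold; in the nondegenerate range this is exactly the f.a.s.\ regime of Proposition~\ref{prop:index-4node}(b), giving $\theta_T<0$ and $\nu_T<0$. Now $\nu_T<0$ together with Lemma~\ref{lem:relations_as}(b) yields $c_Ae_A<c_Be_B$, while $\theta_T<0$ and $\nu_T<0$ together with Lemma~\ref{lem:relations_as}(c) yield $c_A^3e_B>c_Be_A^3$. These are precisely the two hypotheses of the f.a.s.\ criterion for $\Sigma_\text{Star}$ in Proposition~\ref{prop:stab_as}\ref{stab_Star}, so $\Sigma_\text{Star}$ is f.a.s.

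The main obstacle is the passage in part~(b) from the mere property ``f.a.s.'' back to the strict inequalities $\theta_T<0,\ \nu_T<0$: Proposition~\ref{prop:index-4node} is phrased as two one-sided criteria, and one must check that the complete-instability condition (a) and the fragmentary-stability condition (b) genuinely partition the parameter space according to the signs of $\theta_T,\nu_T$, so that not being c.u.\ forces the strict signs rather than a boundary case $\theta_T=0$ or $\nu_T=0$ (which corresponds to a degenerate $\lambda_{\max}=1$ and is itself non-f.a.s.). I would note that even the weaker conclusions $\theta_T\le0,\ \nu_T\le0$ already give $c_Ae_A<c_Be_B$ strictly, since the summand $e_A^2c_B>0$ in the numerator of $\nu_T$ forces $c_A(c_Ae_A-c_Be_B)<0$; the only delicate point is the strictness $c_A^3e_B>c_Be_A^3$, which I would secure from the strict f.a.s.\ signs supplied by Proposition~\ref{prop:index-4node}(b).
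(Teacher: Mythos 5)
Your proposal is correct and follows essentially the same route as the paper: part (a1) via the sign of the numerator of $\nu_T$, part (a2) via the incompatibility of $\theta_T<0$ and $\nu_T<0$ with $c_A^3e_B<c_Be_A^3$ (Lemma~\ref{lem:relations_as}(c)), and part (b) by extracting $c_Ae_A<c_Be_B$ and $c_A^3e_B>c_Be_A^3$ from $\theta_T<0$, $\nu_T<0$ and feeding them into Proposition~\ref{prop:stab_as}\ref{stab_Star}. Your extra care about the boundary cases $\theta_T=0$ or $\nu_T=0$ in part (b) is a welcome refinement of a step the paper simply asserts.
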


\begin{proof}
Let $c_A> c_B$ and write $\nu_T$, from Proposition~\ref{prop:index-4node}, as in~\eqref{eq:bn-T}.
We have $\nu_T > 0$ since $c_A e_A - c_B e_B >0$.

Consider now two necessary conditions for $\Sigma_\text{4-node}$  not to be c.u:
$\theta_T<0$ and $\nu_T<0$. Given (c) in Lemma~\ref{lem:relations_as}, we find that 
$c_A^3e_B < c_Be_A^3$ makes the conditions incompatible.

Assuming that the Four-node cycle is f.a.s. we must have $\theta_T<0$ and $\nu_T<0$. Again from (c) in Lemma~\ref{lem:relations_as}, this implies $c_A^3e_B>c_Be_A^3$. From \eqref{eq:bn-T}, it is easy to see that $c_Be_B>c_Ae_A$ follows from $\nu_T<0$.
\end{proof}

\begin{figure}
\begin{center}
\parbox{45mm}{
\begin{center}
\includegraphics[width=45mm]{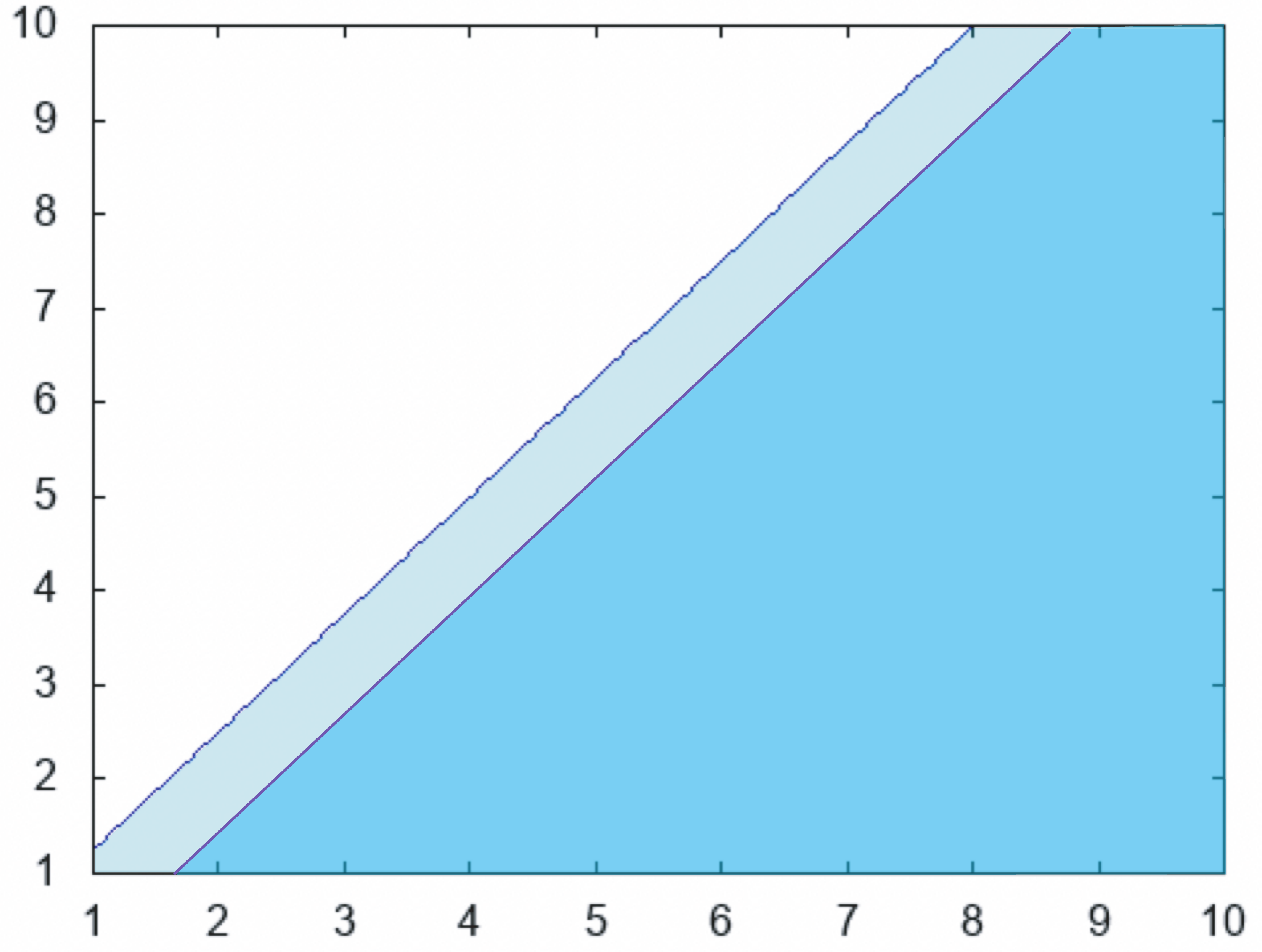}\\
R-to-P
\end{center}
}
\quad
\parbox{45mm}{
\begin{center}
\includegraphics[width=45mm]{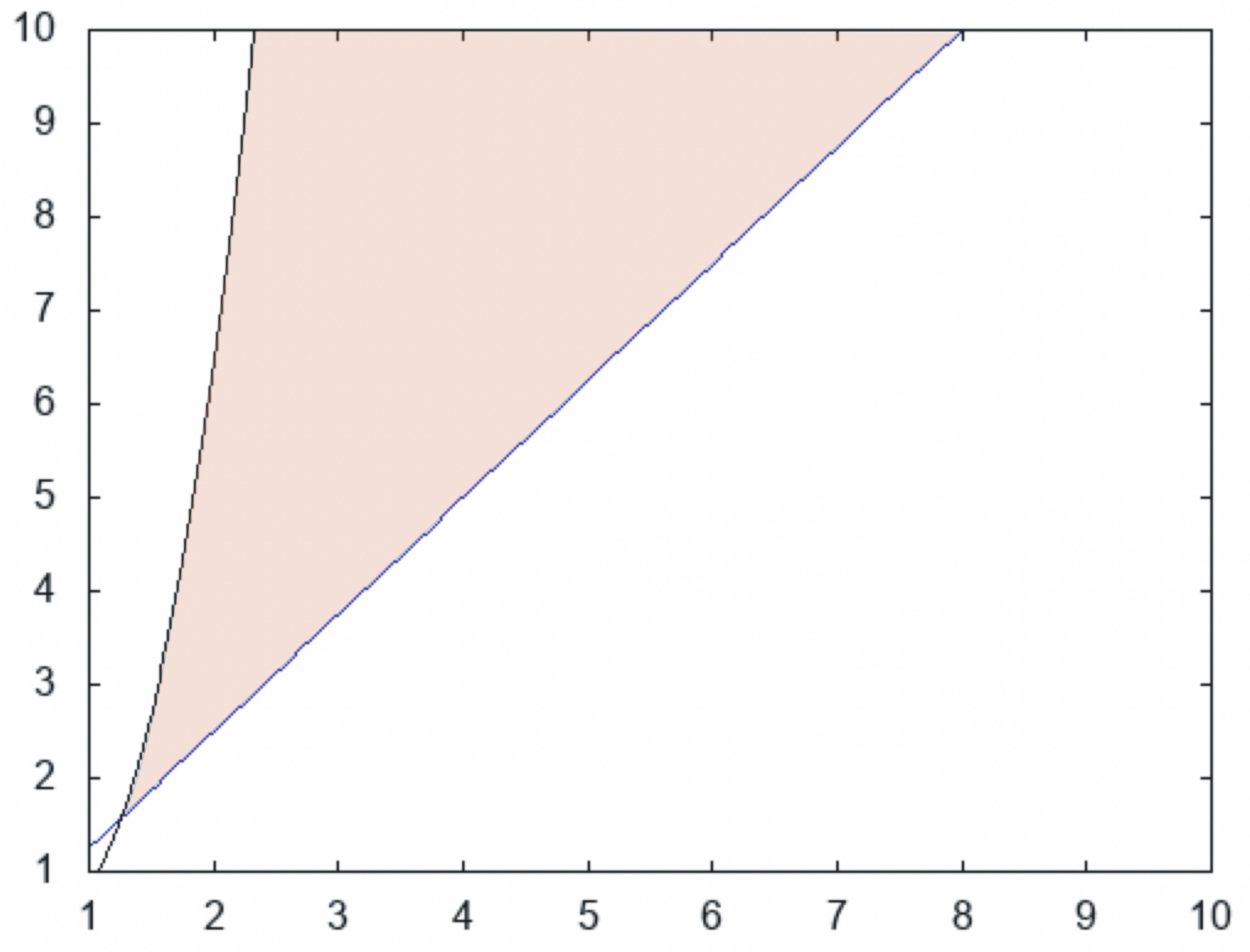}\\
Star
\end{center}
}
\quad
\parbox{45mm}{
\begin{center}
\includegraphics[width=45mm]{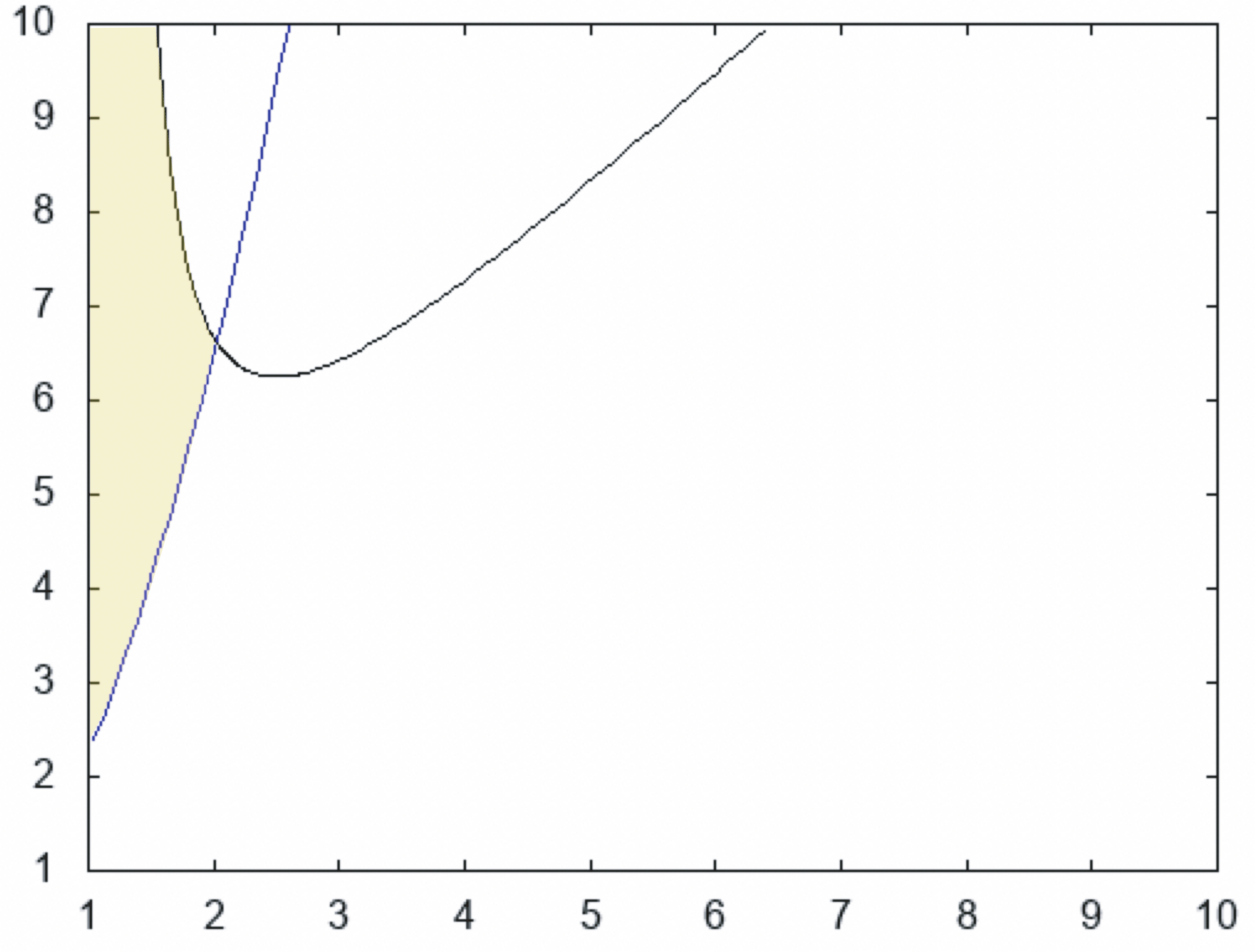}\\
RSP
\end{center}
}
\end{center}
\caption{Regions of stability in the $(c_A,c_B)$ plane for the three elementary cycles, with $e_A=1$, $e_B=0.8$,
in the region where the network is asymptotically stable. In the coloured region the (sub-)cycles  R-to-P,  Star and RSP  are f.a.s. and the sub-cycle R-to-P  is e.a.s. in the darker region. The Four-node sub-cycle is c.u. for at least those parameter values for which the Star cycle is c.u.}\label{fig:stability}
\end{figure}

\begin{figure}
\begin{center}
\includegraphics[width=60mm]{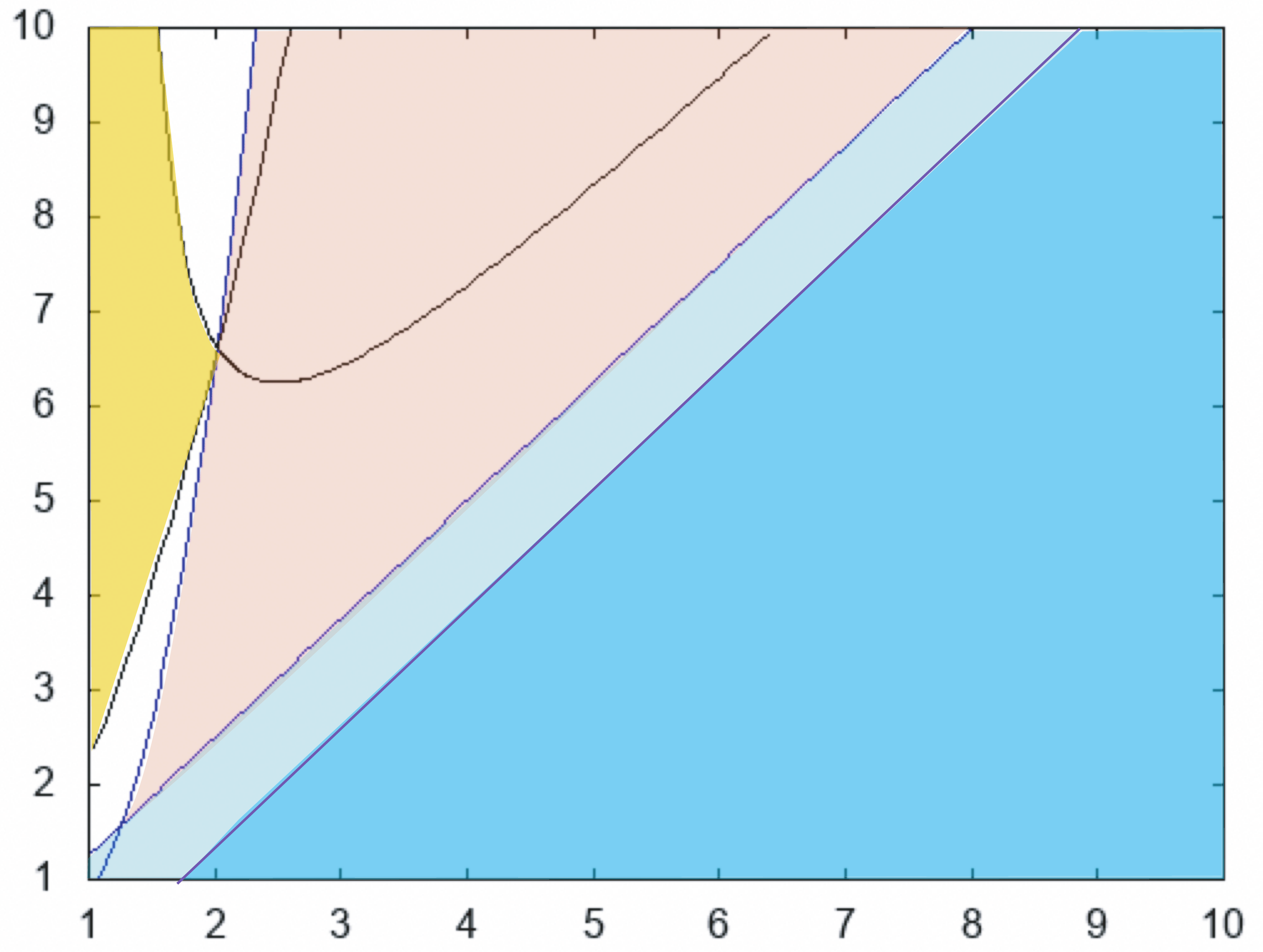}
\end{center}
\caption{Regions of stability in the $(c_A,c_B)$ plane for the three elementary cycles, with $e_A=1$, $e_B=0.8$,
in the region where the network is asymptotically stable.
The sub-cycle RSP is  f.a.s.\  at the yellow region on the left, Star is  f.a.s.\  on the pink region at the centre and 
R-to-P  is f.a.s. on the light blue region and e.a.s. on the darker region at the right. The Four-node sub-cycle is c.u. for at least those parameter values for which the Star cycle is c.u.
In the two white regions all the elementary cycles are c.u.\  The sausages found in \cite{PosRuc} lie in the lower component of the white region. }\label{fig:stabilityAll}
\end{figure}

A straightforward consequence of Lemma~\ref{lem:4node_as} is that the region of stability (f.a.s.) of the Four-node sub-cycle is contained in that of the Star cycle.

Our final result establishes some stability combinations for the cycles in the RSPLS network. The fact that all four 
(sub-)cycles may be c.u. indicates that other sequences may be visible in simulations.

\begin{proposition}\label{prop:stab-relations}
Consider the three elementary (sub-)cycles with an odd number of nodes.
 If \eqref{hyp:as} holds and one of these
 three elementary (sub-)cycles 
 satisfies the conditions above to be f.a.s. then the other two  
 elementary (sub-)cycles are c.u. 
Moreover, if $\nu_T<0$ and $c_A^3e_B < c_Be_A^3$, then all four 
(sub-)cycles are c.u.
\end{proposition}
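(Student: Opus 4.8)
The plan is to prove the two claims of Proposition~\ref{prop:stab-relations} separately, relying almost entirely on the stability characterizations already established in Proposition~\ref{prop:stab_as} and Lemma~\ref{lem:4node_as}, together with the relational inequalities of Lemma~\ref{lem:relations_as}. The three elementary cycles with an odd number of nodes are $\Sigma_\text{R-to-P}$ (five nodes), $\Sigma_\text{Star}$ (five nodes), and $\Sigma_\text{RSP}$ (three nodes); the first claim asserts that f.a.s.\ of any one of these forces complete instability of the remaining two. First I would handle the pairing between $\Sigma_\text{R-to-P}$ and $\Sigma_\text{Star}$: by Proposition~\ref{prop:stab_as}(a2) and (b2), $\Sigma_\text{R-to-P}$ being f.a.s.\ gives $c_Ae_A>c_Be_B$, while $\Sigma_\text{Star}$ being f.a.s.\ requires $c_Be_B>c_Ae_A$; these are mutually exclusive, so at most one of them is f.a.s., and by (a1)/(b1) the other is in fact c.u.\ under~\eqref{hyp:as}. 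This reproves part of Lemma~\ref{lem:no2stable} inside the a.s.-network regime.

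Next I would bring in $\Sigma_\text{RSP}$. If $\Sigma_\text{RSP}$ is f.a.s., then by Proposition~\ref{prop:stab_as}(c2) we have $\theta_T>0$ and $\nu_T>0$; applying Lemma~\ref{lem:relations_as}(b) with $\theta_T>0$ yields $c_Ae_A<c_Be_B$, which by~(a1) makes $\Sigma_\text{R-to-P}$ c.u., and applying Lemma~\ref{lem:relations_as}(c) with $\theta_T>0,\nu_T>0$ yields $c_A^3e_B<c_Be_A^3$, which by~(b1) makes $\Sigma_\text{Star}$ c.u. Conversely, if $\Sigma_\text{R-to-P}$ is f.a.s.\ then $c_Ae_A>c_Be_B$, and by Lemma~\ref{lem:relations_as}(b) (contrapositive) none of $\alpha_T>0$, $\theta_T>0$, $\beta_T<0$, $\nu_T<0$ can hold; in particular $\theta_T\le0$, so $\Sigma_\text{RSP}$ fails the f.a.s.\ condition and indeed satisfies a c.u.\ condition from~(c1). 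Similarly, if $\Sigma_\text{Star}$ is f.a.s.\ then $c_Be_B>c_Ae_A$ and $c_A^3e_B>c_Be_A^3$; the first inequality gives $\nu_T>0$ via the expression in~\eqref{eq:bn-T} (so $\Sigma_\text{RSP}$ is c.u.\ by~(c1)), and $c_Be_B>c_Ae_A$ makes $\Sigma_\text{R-to-P}$ c.u.\ by~(a1). Collecting these implications shows that whichever of the three is f.a.s., the other two are c.u., which is the first assertion.

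For the final sentence, suppose $\nu_T<0$ and $c_A^3e_B<c_Be_A^3$. From $\nu_T<0$, Proposition~\ref{prop:stab_as}(c1) gives that $\Sigma_\text{RSP}$ is c.u.; moreover the expression for $\nu_T$ in~\eqref{eq:bn-T} shows $\nu_T<0$ forces $c_Be_B>c_Ae_A$, so by~(a1) $\Sigma_\text{R-to-P}$ is c.u. The hypothesis $c_A^3e_B<c_Be_A^3$ makes $\Sigma_\text{Star}$ c.u.\ by Proposition~\ref{prop:stab_as}(b1). Finally, $c_A^3e_B<c_Be_A^3$ is exactly condition~(a2) of Lemma~\ref{lem:4node_as}, so $\Sigma_\text{4-node}$ is c.u.\ as well; hence all four (sub-)cycles are completely unstable.

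The only genuinely delicate step is the chain of sign translations through Lemma~\ref{lem:relations_as}, especially reading off the sign of $\nu_T$ from~\eqref{eq:bn-T} and confirming that the f.a.s.\ criterion for $\Sigma_\text{Star}$ (which involves $c_A^3e_B>c_Be_A^3$ directly, not merely through $\theta_T,\nu_T$) lines up with the $\Sigma_\text{RSP}$ condition; one must be careful that the implications are used in the correct direction (f.a.s.\ $\Rightarrow$ parameter inequality $\Rightarrow$ c.u.\ of a different cycle), since several of the lemma's statements are one-directional. Everything else is a bookkeeping assembly of results already proved, with no new estimates required.
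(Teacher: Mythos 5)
Your overall strategy coincides with the paper's: a case analysis over which of the three odd-node (sub-)cycles satisfies the f.a.s.\ conditions, translating each such condition into a parameter inequality and feeding it back through Lemma~\ref{lem:relations_as} into the c.u.\ criteria of Proposition~\ref{prop:stab_as} (and Lemma~\ref{lem:4node_as} for the final claim). The cases where $\Sigma_\text{R-to-P}$ or $\Sigma_\text{RSP}$ is f.a.s., and the concluding statement that all four (sub-)cycles can be simultaneously c.u., are handled correctly and essentially as in the paper.

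However, the step ``$\Sigma_\text{Star}$ f.a.s.\ $\Rightarrow$ $\Sigma_\text{RSP}$ c.u.'' is broken, on two counts. First, the implication you invoke, $c_Be_B>c_Ae_A \Rightarrow \nu_T>0$, is false: by \eqref{eq:bn-T} the numerator of $\nu_T$ is $c_A(c_Ae_A-c_Be_B)+e_A^2c_B$, whose sign is not determined when $c_Ae_A-c_Be_B<0$ (for instance $e_A=1$, $e_B=0.8$, $c_A=2$, $c_B=10$ satisfies \eqref{hyp:as} and $c_Be_B>c_Ae_A$ but gives a negative numerator). Lemma~\ref{lem:relations_as}(b) provides only the one-way implication $\nu_T<0\Rightarrow c_Ae_A<c_Be_B$, not its converse. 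Second, even where $\nu_T>0$ does hold, it yields nothing about $\Sigma_\text{RSP}$: Proposition~\ref{prop:stab_as}(c1) declares $\Sigma_\text{RSP}$ c.u.\ when $\nu_T<0$ (or $\theta_T<0$, etc.), so the sign you derive points the wrong way --- it is $\Sigma_\text{4-node}$, not $\Sigma_\text{RSP}$, that is destabilised by $\nu_T>0$. The correct route, which is the one the paper takes, uses the \emph{other} f.a.s.\ condition for the Star cycle, $c_A^3e_B>c_Be_A^3$: by the contrapositive of Lemma~\ref{lem:relations_as}(c) this forces $\theta_T<0$ or $\nu_T<0$, and either alternative makes $\Sigma_\text{RSP}$ c.u.\ by Proposition~\ref{prop:stab_as}(c1). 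With that single step replaced, your argument is complete; the remaining (minor) looseness --- reading the parameter inequalities off the sufficient f.a.s.\ conditions (a2), (b2), (c2) rather than off the contrapositives of (a1), (b1), (c1) --- is shared with the paper and is harmless given that the proposition's hypothesis is that the cycle ``satisfies the conditions above to be f.a.s.''
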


\begin{proof}
According to Proposition~\ref{prop:stab_as},
$\Sigma_\text{Star}$ and $\Sigma_\text{RSP}$
are at most f.a.s. If $\Sigma_\text{R-to-P}$ is not c.u. then it is 
automatically f.a.s.

Suppose that $\Sigma_\text{R-to-P}$ is f.a.s. From (a1) in Proposition~\ref{prop:stab_as} we get $c_Ae_A  > c_Be_B$. Lemma~\ref{lem:no2stable} leads to $\Sigma_\text{Star}$ being c.u.  
The contrapositive of (b) in Lemma~\ref{lem:relations_as} determines that $c_Ae_A  > c_Be_B$ yields $\alpha_T, \theta_T <0$ and $\nu_T>0$. 
Proposition~\ref{prop:index-RSP} states that the stability indices for $\Sigma_\text{RSP}$ are all equal to $-\infty$ and 
this sub-cycle is also c.u.

Suppose that $\Sigma_\text{Star}$ is f.a.s. Recall that Lemma~\ref{lem:no2stable} already establishes that  $\Sigma_\text{R-to-P}$ is c.u.
From the contrapositive of (c) in Lemma~\ref{lem:relations_as}, if $c_A^3e_B>c_Be_A^3$ then
either $\theta_T<0$ or $\nu_T<0$, and thus $\Sigma_\text{RSP}$ is also c.u.

Suppose that $\Sigma_\text{RSP}$ is f.a.s. 
By virtue of (b) in Lemma~\ref{lem:relations_as}, when $\theta_T > 0$ we have $c_A e_A < c_B e_B$.
That $\Sigma_\text{R-to-P}$ is c.u. is a consequence of (a1) in Proposition~\ref{prop:stab_as}.
Given (c) in Lemma~\ref{lem:relations_as}, it follows that $c_A^3e_B<c_Be_A^3$, and hence $\Sigma_\text{Star}$ is c.u.

To prove the second statement we use (b) in Lemma~\ref{lem:relations_as} whence
$\nu_T<0$ implies $c_Ae_A<c_Be_B$, ensuring that $\Sigma_\text{RSP}$ and $\Sigma_\text{R-to-P}$ are both c.u. The remaining condition asserts that $\Sigma_\text{Star}$ 
and $\Sigma_\text{4-node}$ are also c.u. 
\end{proof}

We finish this section by considering the parameter range depicted in Figure~7 of \cite{PosRuc} and add the information provided by our analysis. This allows us to distinguish f.a.s. from e.a.s. when a cycle is f.a.s. but not e.a.s.

Since the Star cycle is not e.a.s. for the chosen values of $e_A$ and $e_B$, and the stability indices are the same along all its connections, our results coincide in determining the stability region of this cycle. However, for the remaining two sub-cycles, we can add that there are smaller regions inside those identified in Figure~7 of \cite{PosRuc} where stronger attraction properties occur. 

Propositions~\ref{prop:ind-R-to-P},~\ref{prop:ind-Star} and  \ref{prop:index-RSP} also provide proof that the (sub-)cycles are f.a.s, but not e.a.s, in the regions depicted for their stability in \cite{PosRuc}. This is achieved by replacing $e_A=1$ and $e_B=0.8$ in the expressions for the values of the function $F^\text{index}$ in Propositions~\ref{prop:ind-R-to-P} and~\ref{prop:ind-Star} and by replacing these values directly into the stability indices calculated in Proposition~\ref{prop:index-RSP}. 

When the RSPLS network is a.s., the Rock-to-Paper sub-cycle is e.a.s.\ under condition~(a3) in Proposition~\ref{prop:stab_as} and, if we allow $c_A$ or $c_B$ to be smaller than $e_A$, then a lower bound appears to guarantee that the last of the values of $F^\text{index}$ in the proof of Proposition~\ref{prop:ind-R-to-P} is positive. Although the RSP sub-cycle is not e.a.s., it may be f.a.s. with either just one or two connections with a positive stability index ($\sigma_{23}<0$ always). The values obtained by replacing $e_A=1$ and $e_B=0.8$ in Proposition~\ref{prop:index-RSP} show that $\sigma_{12}>0$ and
\[
\sigma_{31}=\begin{cases}
1-0.8 c_B + c_A \ (<0) & \text{ if } \dfrac{1+c_A}{c_B} \leq 0.8 \\
\min\left\{ 0.25, -1 +\dfrac{1}{0.8c_B-c_A} \ (>0) \right\} & \text{ if } 0.8 < \dfrac{1+c_A}{c_B}.
\end{cases}
\]
Hence, the existence of at least one positive index may support the visibility of the RSP sub-cycle in simulations. In the region of stability but closer to its lower bound there are two connections along which the stability index is positive. This promotes the attraction properties of the cycle.

Note that the parameter region depicted in our figures is much wider than that analysed in \cite{PosRuc}. In fact, our results are analytic and therefore extend to values, not previously considered, of all four eigenvalues: $e_A$, $e_B$, $c_A$ and $c_B$.

\section{Concluding remarks}

We present a new and thorough analysis of the stability for the heteroclinic network describing the RSPLS game. We provide a detailed study of the stability of some sub-cycles while providing information for the interested reader to calculate the stability indices along any trajectory of the network. Our stability results support the findings of Postlethwaite and Rucklidge \cite{PosRuc} as well as some other simulations by other authors. 
At the same time, we establish stability results for a parameter range much wider than that in \cite{PosRuc}. Our results lead to the conjecture that interesting dynamics may be found in the white region at the top of our Figure~\ref{fig:stability}. 

In Vukov {\em et al.} \cite{VukSzoSza}, the variation of the invasion rates shows that ``two of the five species can become extinct within a short transient time and the system evolves into one of the three-species solutions''. This is consistent with the complete instability of connections of type A in the cycles of 5 nodes, while some weak stability is preserved for the RSP cycle. 
Such a result occurs within the region of stability for the sequence AAB in \cite[Figure~1]{PosRuc}.

The transitions among the five equivalent configurations of the RSP game appear in Cheng {\em et al.} \cite{Che_et_al2014} depicted in the Spiral Interaction Graph in their Figure~3, corresponding to the snapshots of their Figure~2(a).

Park {\em et al.} \cite{Par_et_al2017} report on the coexistence of all five species for certain parameter ranges describing the strength of competition, while
for some other parameter ranges less common coexistence patterns are observed. The coexistence of all five species corresponds to the stability of either of the heteroclinic cycles with 5 nodes. The other patterns reported in \cite{Par_et_al2017} include coexistence of 4 of the 5 species as a subnetwork without one node and coexistence of 3 of the 5 species in a non-cyclic manner. Both cases can be modelled by allowing for a more generic setting than that of \cite{PosRuc}, for example, by using the generic values of \cite{AfrMosYou} and tweaking the parameters to obtain the desired stability index for each connection in the network.

\paragraph{Acknowledgements:}
The first author is grateful to C.\ Postlethwaite for some useful clarifications of her work. The authors thank A.\ Lohse and the reviewers for some insightful remarks.

All authors were partially supported by Centro de Matem\'atica da Universidade do Porto (CMUP), financed by national funds through FCT - Funda\c{c}\~ao para a Ci\^encia e a Tecnologia, I.P., under the project UIDB/00144/2020. 
The third author is the recipient of the PhD grant number PD/BD/150534/2019 awarded by FCT - Funda\c{c}\~ao para a Ci\^encia e a Tecnologia which is co-financed by the Portuguese state budget, through the Ministry for Science, Technology and Higher Education (MCTES) and by the European Social Fund (FSE), through Programa Operacional Regional do Norte.

\appendix

\section{Transition matrices} 

\subsection{The Rock-to-Paper sub-cycle}  \label{app:R-to-P}

The products of basic transition matrices with respect to the Rock-to-Paper sub-cycle near~$\xi_2$ are: 
\[
\M_{(j+1,2)} = (\M_2)^{j} : \ H_2^{\inn} \rightarrow H_{j+2}^{\inn}, \quad j= 1,\ldots,5  \ (\text{mod } 5),
\]
where
\begin{align*}
\M_{(2,2)}  = \M_2 & = \begin{bmatrix}
\dfrac{c_B}{e_A} & 0 & 1 \\[0.4cm]
\dfrac{c_A}{e_A} & 0 & 0 \\[0.4cm]
-\dfrac{e_B}{e_A} & 1 & 0
\end{bmatrix}, 
\quad \M_{(3,2)}  = (\M_2)^2  = \begin{bmatrix}
\dfrac{c_B^2}{e_A^2} -\dfrac{e_B}{e_A} & 1 & \dfrac{c_B}{e_A} \\[0.4cm]
\dfrac{c_A c_B}{e_A^2}& 0 &  \dfrac{c_A}{e_A} \\[0.4cm]
-\dfrac{e_B c_B}{e_A^2} + \dfrac{c_A}{e_A}& 0 &  -\dfrac{e_B}{e_A}
\end{bmatrix},  \\[0.4cm]
\M_{(4,2)}  = (\M_2)^3 & = \begin{bmatrix}
\dfrac{c_B^3}{e_A^3} - \dfrac{2  \, e_B c_B}{e_A^2} + \dfrac{c_A}{e_A} & \dfrac{c_B}{e_A} & \dfrac{c_B^2}{e_A^2} - \dfrac{e_B}{e_A} \\[0.4cm]
\dfrac{c_B^2 c_A}{e_A^3} - \dfrac{c_A e_B}{e_A^2} & \dfrac{c_A}{e_A} & \dfrac{c_A c_B}{e_A^2} \\[0.4cm]
-\dfrac{c_B^2 e_B}{e_A^3} + \dfrac{c_A c_B + e_B^2}{e_A^2} & -\dfrac{e_B}{e_A}  & -\dfrac{e_B c_B}{e_A^2} + \dfrac{c_A}{e_A} 
\end{bmatrix},  \\[0.4cm]
\M_{(5,2)} = (\M_2)^4 & = \begin{bmatrix}
\dfrac{c_B^4}{e_A^4} - \dfrac{3 \, c_B^2 e_B}{e_A^3} + \dfrac{2 \, c_A c_B + e_B^2}{e_A^2}  & \dfrac{c_B^2}{e_A^2} - \dfrac{e_B}{e_A} & \dfrac{c_B^3}{e_A^3} - \dfrac{2 \, e_B c_B}{e_A^2} + \dfrac{c_A}{e_A} \\[0.4cm]
\dfrac{c_B^3 c_A}{e_A^4} - \dfrac{2 \, c_A c_B e_B}{e_A^3} + \dfrac{c_A^2}{e_A^2} & \dfrac{c_A c_B}{e_A^2} & \dfrac{c_B^2 c_A}{e_A^3} - \dfrac{c_A e_B}{e_A^2} \\[0.4cm]
-\dfrac{c_B^3 e_B}{e_A^4} + \dfrac{c_B^2 c_A + 2 \, e_B^2 c_B}{e_A^3}  - \dfrac{2 \, c_A e_B}{e_A^2} & -\dfrac{e_B c_B}{e_A^2} + \dfrac{c_A}{e_A}  & -\dfrac{c_B^2 e_B}{e_A^3} + \dfrac{c_A c_B + e_B^2}{e_A^2}
\end{bmatrix},
\end{align*}%
and
 \begin{multline*}
 \M_{(1,2)} = (\M_2)^5  = \left[ \begin{matrix}
\dfrac{c_B^5}{e_A^5} - \dfrac{4 \, c_B^3 e_B}{e_A^4} + \dfrac{3\, c_B^2 c_A+ 3 \, e_B^2 c_B}{e_A^3}  - \dfrac{2 \, c_A e_B}{e_A^2} \\[0.4cm]
 \dfrac{c_B^4 c_A}{e_A^5} - \dfrac{3 \, c_B^2 c_A e_B}{e_A^4} + \dfrac{2\, c_A^2 c_B + e_B^2 c_A}{e_A^3}  & \\[0.4cm]
-\dfrac{c_B^4 e_B}{e_A^5} + \dfrac{c_B^3 c_A + 3 \, c_B^2 e_B^2}{e_A^4} - \dfrac{4\, c_A c_B e_B + e_B^3}{e_A^3} +\dfrac{c_A^2}{e_A^2}   \end{matrix} \right. \\[0.4cm]
\left. \begin{matrix}
  \dfrac{c_B^3}{e_A^3} - \dfrac{2 \, e_B c_B}{e_A^2} + \dfrac{c_A}{e_A}  & \dfrac{c_B^4}{e_A^4} - \dfrac{3 \, c_B^2 e_B}{e_A^3} + \dfrac{2\, c_A c_B + e_B^2}{e_A^2}  \\[0.4cm]
 \dfrac{c_B^2 c_A}{e_A^3} - \dfrac{c_A e_B}{e_A^2}  & \dfrac{c_B^3 c_A}{e_A^4} - \dfrac{2\, c_A c_B e_B}{e_A^3} + \dfrac{c_A^2}{e_A^2}  \\[0.4cm]
   -\dfrac{c_B^2 c_A}{e_A^3} + \dfrac{c_A c_B + e_B^2}{e_A^2} & -\dfrac{c_B^3 e_B}{e_A^4} + \dfrac{c_B^2 c_A + 2\, e_B^2 c_B}{e_A^3} - \dfrac{2\, c_A e_B}{e_A^2}
\end{matrix} \right].
\end{multline*}
 
\subsection{The Star cycle} \label{app:Star}

The products of basic transition matrices with respect to the Star cycle near~$\xi_4$ are: 
\[
\M_{(3j+1,4)} = (\M_4)^j : \ H_4^{\inn} \rightarrow H_{3j+4}^{\inn},
\]
where
\begin{align*}
\M_{(4,4)}  = \M_4 & = \begin{bmatrix}
0 & \dfrac{c_A}{e_B} & 1 \\[0.4cm]
1 & -\dfrac{e_A}{e_B} & 0 \\[0.4cm]
0 & \dfrac{c_B}{e_B} & 0
\end{bmatrix}, 
\quad \M_{(2,4)}  = (\M_4)^2  = \begin{bmatrix}
\dfrac{c_A}{e_B} & -\dfrac{c_A e_A}{e_B^2} + \dfrac{c_B}{e_B} & 0 \\[0.4cm]
-\dfrac{e_A}{e_B}&  \dfrac{e_A^2}{e_B^2} +  \dfrac{c_A}{e_B} & 1 \\[0.4cm]
\dfrac{c_B}{e_B} &  -\dfrac{c_B e_A}{e_B^2} & 0
\end{bmatrix},  \\[0.4cm]
\quad \M_{(5,4)}  = (\M_4)^3 & = \begin{bmatrix}
-\dfrac{c_A e_A}{e_B^2} + \dfrac{c_B}{e_B} & \dfrac{e_A^2 c_A}{e_B^3} + \dfrac{c_A^2}{e_B^2} - \dfrac{c_B e_A}{e_B^2} & \dfrac{c_A}{e_B} \\[0.4cm]
\dfrac{e_A^2}{e_B^2} + \dfrac{c_A}{e_B} & -\dfrac{e_A^3}{e_B^3} - \dfrac{2\, c_A e_A}{e_B^2} + \dfrac{c_B}{e_B} & -\dfrac{e_A}{e_B} \\[0.4cm]
-\dfrac{c_B e_A}{e_B^2} & \dfrac{e_A^2 c_B}{e_B^3} + \dfrac{c_B c_A}{e_B^2} & \dfrac{c_B}{e_B} 
\end{bmatrix},  \\[0.4cm]
\M_{(3,4)}  = (\M_4)^4 & = \begin{bmatrix}
\dfrac{e_A^2 c_A}{e_B^3} + \dfrac{c_A^2- c_B e_A}{e_B^2} & -\dfrac{e_A^3 c_A}{e_B^4} + \dfrac{e_A^2 c_B - 2\, c_A^2 e_A}{e_B^3} + \dfrac{2\, c_B c_A}{e_B^2} & -\dfrac{c_A e_A}{e_B^2} +\dfrac{c_B}{e_B} \\[0.4cm]
-\dfrac{e_A^3}{e_B^3} - \dfrac{2\, c_A e_A}{e_B^2} +\dfrac{c_B}{e_B} & \dfrac{e_A^4}{e_B^4} + \dfrac{3 \, e_A^2 c_A}{e_B^3} + \dfrac{c_A^2 -2 \, c_B e_A}{e_B^2} & \dfrac{e_A^2}{e_B^2} + \dfrac{c_A}{e_B}  \\[0.4cm]
\dfrac{e_A^2 c_B}{e_B^3} + \dfrac{c_B c_A}{e_B^2} & -\dfrac{e_A^3 c_B}{e_B^4} - \dfrac{2 \, c_B e_A c_A}{e_B^3} + \dfrac{c_B^2}{e_B^2} & -\dfrac{c_B e_A}{e_B^2}
\end{bmatrix},
\end{align*}
and
\begin{multline*}
\M_{(1,4)}  = (\M_4)^5  = \left[ \begin{matrix}
-\dfrac{e_A^3 c_A}{e_B^4} +\dfrac{e_A^2 c_B - 2\, c_A^2 e_A}{e_B^3} +\dfrac{ 2\, c_B c_A}{e_B^2}  \\[0.4cm]
\dfrac{e_A^4}{e_B^4} + \dfrac{3 \, e_A^2 c_A}{e_B^3} + \dfrac{c_A^2 - 2\, c_B e_A}{e_B^2} \\[0.4cm]
-\dfrac{e_A^3 c_B}{e_B^4} - \dfrac{2 \, c_B e_A c_A}{e_B^3} + \dfrac{c_B^2}{e_B^2} \end{matrix} \right. \\[0.4cm]
\left.\begin{matrix}
\dfrac{e_A^4 c_A}{e_B^5} + \dfrac{3 \, c_A^2 e_A^2 - e_A^3 c_B}{e_B^4} +\dfrac{c_A^3 - 4\, c_B e_A c_A}{e_B^3} +\dfrac{c_B^2}{e_B^2}  & \dfrac{e_A^2 c_A}{e_B^3} + \dfrac{c_A^2 - c_B e_A}{e_B^2} \\[0.4cm]
- \dfrac{e_A^5}{e_B^5} -\dfrac{4 \, e_A^3 c_A}{e_B^4} + \dfrac{3 \, e_A^2 c_B - 3 c_A^2 e_A}{e_B^3} +\dfrac{2 \, c_B c_A}{e_B^2} & -\dfrac{e_A^3}{e_B^3} - \dfrac{2 \, c_A e_A}{e_B^2} + \dfrac{c_B}{e_B} \\[0.4cm]
\dfrac{e_A^4 c_B}{e_B^5} +\dfrac{3 \, e_A^2 c_A c_B}{e_B^4} + \dfrac{c_A^2 c_B - 2\, c_B^2 e_A}{e_B^3} & \dfrac{e_A^2 c_B}{e_B^3} + \dfrac{c_B c_A}{e_B^2}
\end{matrix} \right].
\end{multline*}
    
\subsection{The RSP sub-cycle} \label{subsec:RSP}

The products of basic transition matrices with respect to the RSP sub-cycle are:
\[
\begin{aligned}
\M_{(j,j+1)} & =  \M_{j+1} \M_j   & : \ & H_j^{\inn} \rightarrow H_{j+2}^{\inn}, \\[0.2cm]
\M^{(j)} & = \M_{j+2} \M_{j+1} \M_j   & : \ & H_j^{\inn}  \rightarrow H_j^{\inn}, \quad j= 1,2,3  \text{ (mod } 3)
\end{aligned}
\]
where
\begin{align}
\M_{(2,1)}  = \M_2 \M_1 & = \begin{bmatrix}
\dfrac{c_B^2}{e_A^2} -\dfrac{e_B}{e_A} & 1 & 0 \\[0.5cm]
\dfrac{c_A c_B}{e_A^2}& 0 & 0 \\[0.5cm]
-\dfrac{e_B c_B}{e_A^2} + \dfrac{c_A}{e_A}& 0 & 1
\end{bmatrix},  & 
\M^{(1)}  = \M_3 \M_2 \M_1 & = \begin{bmatrix}
\delta_T & 0 & 0 \\[0.5cm]
\alpha_T & 1 & 0 \\[0.5cm]
\beta_T & 0 & 1
\end{bmatrix} \label{eq:trans-M1}\\[0.5cm]
\M_{(3,2)} = \M_3 \M_2 & = \begin{bmatrix}
\dfrac{c_A^2}{e_B e_A} & 0 & 0 \\[0.5cm]
\dfrac{c_B}{e_A} - \dfrac{c_A}{e_B} & 0 & 1 \\[0.5cm]
\dfrac{c_B c_A}{e_B e_A} - \dfrac{e_B}{e_A} & 1 & 0
\end{bmatrix},  &  
\M^{(2)}  = \M_1 \M_3 \M_2 & = \begin{bmatrix}
\delta_T & 0 & 0 \\[0.5cm]
\gamma_T & 1 & 0 \\[0.5cm]
\theta_T & 0 & 1
\end{bmatrix} \label{eq:trans-M2} \\[0.5cm]
\M_{(1,3)} = \M_1 \M_3 & = \begin{bmatrix}
0 & \dfrac{c_B c_A}{e_B e_A} & 0  \\[0.5cm]
0 & \dfrac{c_A^2}{e_B e_A} + \dfrac{c_B}{e_B} & 1 \\[0.5cm]
1 & -\dfrac{c_A}{e_A} - \dfrac{e_A}{e_B} & 0
\end{bmatrix}, &  
\M^{(3)}  = \M_2 \M_1 \M_3 & = \begin{bmatrix}
1 & \mu_T & 0  \\[0.5cm]
0 & \delta_T & 0 \\[0.5cm]
0 & \nu_T & 1  
\end{bmatrix} \label{eq:trans-M3}
\end{align}
and
\begin{align*}
\delta_T & = \frac{c_A^2 c_B}{e_A^2 e_B} &  
\gamma_T & = \frac{c_A^3}{e_A^2 e_B} + \frac{c_B c_A}{e_B e_A} - \frac{e_B}{e_A} \\[0.2cm]
\alpha_T & = \frac{c_B^2}{e_A^2} - \frac{c_A c_B}{e_B e_A}- \frac{e_B}{e_A}  &
\theta_T & = - \frac{c_A^2}{e_A^2} +  \frac{c_B}{e_A} - \frac{c_A}{e_B}
\\[0.2cm]
\beta_T & =  \frac{c_B^2 c_A}{e_A^2 e_B} - \frac{e_B c_B}{e_A^2} + \frac{c_A}{e_A}  &
\mu_T & = \frac{c_B^2 c_A}{e_A^2 e_B} - \frac{c_A}{e_A} - \frac{e_A}{e_B} \\[0.2cm]
 & & 
 \nu_T & =  -\frac{c_B c_A}{e_A^2} + \frac{c_A^2}{e_A e_B} + \frac{c_B}{e_B}.
\end{align*}

\subsection{The Four-node sub-cycle} \label{app:4node}

The product of the basic transition matrices with respect to the Four-node sub-cycle are:
\begin{itemize}
\item near $\xi_1$
\end{itemize}
\[
\widehat{\M}_{(2,1)}  = \widehat{\M}_2 \widehat{\M}_1  = \begin{bmatrix}
\dfrac{c_A^2}{e_B e_A} & 0 & \dfrac{c_A}{e_B} \\[0.4cm]
\dfrac{c_B}{e_A} - \dfrac{c_A}{e_B} & 0 & -\dfrac{e_A}{e_B} \\[0.4cm]
\dfrac{c_B c_A}{e_B e_A} -\dfrac{e_B}{e_A}  & 1 & \dfrac{c_B}{e_B}
\end{bmatrix},
\]

\[
\widehat{\M}_{(5,1)}  = \widehat{\M}_5 \widehat{\M}_2 \widehat{\M}_1   = \begin{bmatrix}
-\dfrac{c_A^2}{e_B^2} + \dfrac{2\, c_B c_A}{e_B e_A} - \dfrac{e_B}{e_A} & 1 & -\dfrac{c_A e_A}{e_B^2} + \dfrac{c_B}{e_B} \\[0.4cm]
\dfrac{c_A e_A}{e_B^2} + \dfrac{c_A^2}{e_B e_A}  - \dfrac{c_B}{e_B} & 0 & \dfrac{e_A^2}{e_B^2} +\dfrac{c_A}{e_B} \\[0.4cm]
\dfrac{c_B^2}{e_B e_A} - \dfrac{c_B c_A}{e_B^2}  & 0 & -\dfrac{c_B e_A}{e_B^2}
\end{bmatrix}, 
\]

\[
\widehat{\M}^{(1)} = \widehat{\M}_3 \widehat{\M}_5 \widehat{\M}_2 \widehat{\M}_1   = \begin{bmatrix}
\dfrac{c_A^2 e_A}{e_B^3} + \dfrac{c_A^3}{e_B^2 e_A} - \dfrac{2 \, c_B c_A}{e_B^2} + \dfrac{c_B^2}{e_B e_A} & 0 & \dfrac{e_A^2 c_A}{e_B^3} + \dfrac{c_A^2-c_B e_A}{e_B^2} \\[0.4cm]
-\dfrac{e_A^2 c_A}{e_B^3} - \dfrac{2 \, c_A^2 - c_B e_A }{e_B^2} + \dfrac{2 \, c_B c_A}{e_B e_A} - \dfrac{e_B}{e_A} & 1 & -\dfrac{e_A^3}{e_B^3} - \dfrac{2 \, c_A e_A}{e_B^2} +\dfrac{c_B}{e_B} \\[0.4cm]
\dfrac{c_A e_A c_B}{e_B^3}  + \dfrac{c_A^2 c_B}{e_B^2 e_A} - \dfrac{c_B^2}{e_B^2} & 0 & \dfrac{e_A^2 c_B}{e_B^3} + \dfrac{c_B c_A}{e_B^2}
\end{bmatrix};
\]

\begin{itemize}
\item near $\xi_2$
\end{itemize}

\[
\widehat{\M}_{(5,2)}  =\widehat{\M}_5 \widehat{\M}_2  = \begin{bmatrix}
\dfrac{c_A}{e_B} & -\dfrac{c_A e_A}{e_B^2} + \dfrac{c_B}{e_B} & 1 \\[0.4cm]
-\dfrac{e_A}{e_B} & \dfrac{e_A^2}{e_B^2} + \dfrac{c_A}{e_B} & 0 \\[0.4cm]
\dfrac{c_B}{e_B} & \dfrac{c_B e_A}{e_B^2} & 0 \end{bmatrix}, 
\]

\[
\widehat{\M}_{(3,2)}  = \widehat{\M}_3 \widehat{\M}_5 \widehat{\M}_2   = \begin{bmatrix}
-\dfrac{c_A e_A}{e_B^2} + \dfrac{c_A}{e_B} & \dfrac{e_A^2 c_A}{e_B^3} + \dfrac{c_A^2 - c_B e_A}{e_B^2} & 0 \\[0.4cm]
\dfrac{e_A^2}{e_B^2} + \dfrac{c_A}{e_B} & -\dfrac{e_A^3}{e_B^3} - \dfrac{2 \, c_A e_A}{e_B^2} + \dfrac{c_B}{e_B} & 1 \\[0.4cm]
-\dfrac{c_B e_A}{e_B^2} & \dfrac{c_B e_A^2}{e_B^3} + \dfrac{c_B c_A}{e_B^2} & 0
\end{bmatrix}, 
\]

\[
\widehat{\M}^{(2)} =  \widehat{\M}_1 \widehat{\M}_3 \widehat{\M}_5 \widehat{\M}_2   = \begin{bmatrix}
-\dfrac{c_B c_A}{e_B^2} + \dfrac{c_B^2}{e_B e_A} & \dfrac{e_A c_B c_A}{e_B^3} + \dfrac{c_A^2 c_B}{e_B^2 e_A} - \dfrac{c_B^2}{e_B^2} & 0 \\[0.4cm]
-\dfrac{c_A^2+c_B e_A}{e_B^2} + \dfrac{c_B c_A}{e_B e_A} & \dfrac{c_A^2 e_A + e_A^2 c_B}{e_B^3} + \dfrac{c_A^3}{e_B^2 e_A} &  0 \\[0.4cm]
\dfrac{e_A^2}{e_B^2} + \dfrac{2 \, c_A}{e_B} -\dfrac{c_B}{e_A} & -\dfrac{e_A^3}{e_B^3} - \dfrac{3 \, c_A e_A}{e_B^2} - \dfrac{c_A^2}{e_B e_A} + \dfrac{2 \, c_B}{e_B} & 1
\end{bmatrix};
\]

\begin{itemize}
\item near $\xi_5$
\end{itemize}

\[
\widehat{\M}_{(3,5)}  = \widehat{\M}_3 \widehat{\M}_5 = \begin{bmatrix}
\dfrac{c_A}{e_B} & -\dfrac{c_A e_A}{e_B^2} + \dfrac{c_B}{e_B} & 0 \\[0.4cm]
-\dfrac{e_A}{e_B} & \dfrac{e_A^2}{e_B^2} +  \dfrac{c_A}{e_B} & 1 \\[0.4cm]
\dfrac{c_B}{e_B} & -\dfrac{c_B e_A}{e_B^2} & 0  \end{bmatrix}, 
\]

\[
\widehat{\M}_{(1,5)}  = \widehat{\M}_1 \widehat{\M}_3 \widehat{\M}_5   = \begin{bmatrix}
\dfrac{c_B c_A}{e_B e_A} & - \dfrac{c_B c_A}{e_B^2} + \dfrac{c_B^2}{e_B e_A} & 0 \\[0.4cm]
\dfrac{c_A^2}{e_B e_A} + \dfrac{c_B}{e_B} & -\dfrac{c_A^2 + c_B e_A}{e_B^2} + \dfrac{c_B c_A}{e_B e_A} & 0 \\[0.4cm]
-\dfrac{c_A}{e_A} - \dfrac{e_A}{e_B} & \dfrac{e_A^2}{e_B^2} + \dfrac{2 \, c_A}{e_B} - \dfrac{c_B}{e_A} & 1
\end{bmatrix}, 
\]

\[
\widehat{\M}^{(5)} = \widehat{\M}_2  \widehat{\M}_1 \widehat{\M}_3 \widehat{\M}_5  = \begin{bmatrix}
\dfrac{c_A^3}{e_B^2 e_A} + \dfrac{c_B c_A}{e_B^2} & - \dfrac{c_A^3 + e_A c_B c_A}{e_B^3} + \dfrac{c_A^2 c_B}{e_B^2 e_A} & 0  \\[0.4cm]
-\dfrac{c_A^2 + c_B e _A}{e_B^2} + \dfrac{c_B c_A}{e_B e_A} & \dfrac{c_A^2 e_A + e_A^2 c_B}{e_B^3} - \dfrac{2 \, c_B c_A}{e_B^2} + \dfrac{c_B^2}{e_B e_A} & 0 \\[0.4cm]
\dfrac{c_A^2 c_B}{e_B^2 e_A} + \dfrac{c_B^2}{e_B^2} - \dfrac{c_A}{e_A} - \dfrac{e_A}{e_B} & - \dfrac{c_A^2 c_B + c_B^2 e_A}{e_B^3} + \dfrac{c_B^2 c_A}{e_B^2 e_A} + \dfrac{e_A^2}{e_B^2} + \dfrac{2 \, c_A}{e_B} -\dfrac{c_B}{e_A} & 1
\end{bmatrix};
\]

\begin{itemize}
\item near $\xi_3$
\end{itemize}

\[
\widehat{\M}_{(1,3)}  = \widehat{\M}_1 \widehat{\M}_3  = \begin{bmatrix}
0 & \dfrac{c_B c_A}{e_B e_A} & \dfrac{c_B}{e_A} \\[0.4cm]
0 & \dfrac{c_A^2}{e_B e_A} + \dfrac{c_B}{e_B} & \dfrac{c_A}{e_A} \\[0.4cm]
1 & -\dfrac{c_A}{e_A} - \dfrac{e_A}{e_B} & -\dfrac{e_B}{e_A} \end{bmatrix}, 
\]

\[
\widehat{\M}_{(2,3)}  = \widehat{\M}_2 \widehat{\M}_1 \widehat{\M}_3  = \begin{bmatrix}
0 & \dfrac{c_A^3}{e_B^2 e_A} + \dfrac{c_B c_A}{e_B^2} & \dfrac{c_A^2}{e_B e_A} \\[0.4cm]
0 & -\dfrac{c_A^2 + c_B e_A}{e_B^2} + \dfrac{c_B c_A}{e_B e_A}  & \dfrac{c_B}{e_A} - \dfrac{c_A}{e_B} \\[0.4cm]
1 & \dfrac{c_A^2 c_B}{e_B^2 e_A} + \dfrac{c_B^2}{e_B^2} - \dfrac{c_A}{e_A} - \dfrac{e_A}{e_B} & \dfrac{c_B c_A}{e_B e_A} - \dfrac{e_B}{e_A}
\end{bmatrix}, 
\]

\[
\widehat{\M}^{(3)} =  \widehat{\M}_5 \widehat{\M}_2  \widehat{\M}_1 \widehat{\M}_3  = \begin{bmatrix}
1 & -\dfrac{c_A^3 + e_A c_B c_A}{e_B^3} + \dfrac{2 \, c_A^2 c_B}{e_B^2 e_A} + \dfrac{c_B^2}{e_B^2} - \dfrac{c_A}{e_A} - \dfrac{e_A}{e_B} & - \dfrac{c_A^2}{e_B^2} + \dfrac{2 \, c_B c_A}{e_B e_A} - \dfrac{e_B}{e_A} \\[0.4cm]
 0 & \dfrac{c_A^2 e_A + e_A^2 c_B}{e_B^3} + \dfrac{c_A^3}{e_B^2 e_A} & \dfrac{c_A e_A}{e_B^2} + \dfrac{c_A^2}{e_B e_A} - \dfrac{c_B}{e_B} \\[0.4cm]
0 & -\dfrac{c_A^2 c_B + c_B^2 e_A}{e_B^3} + \dfrac{c_B^2 c_A}{e_B^2 e_A} & -\dfrac{c_B c_A}{e_B^2} + \dfrac{c_B^2}{e_B e_A}
\end{bmatrix}.
\]

\bigskip

\section{Proofs}

\subsection{Proposition~\ref{prop:index-RSP}} \label{subsec:proof}

Given the entries of the transition matrices $M^{(j)}$, $j=1,2,3$, in \eqref{eq:trans-M1}--\eqref{eq:trans-M3} we derive the relations below
\begin{align*}
 \frac{c_B}{e_B} (\delta_T -1) + \nu_T & = \frac{c_A}{e_B} \beta_T,  & \frac{c_B}{e_A}\theta_T + \frac{e_B}{e_A} (\delta_T-1) & = \alpha_T,  \\[0.2cm]
\frac{c_A}{e_A}(\delta_T-1) + \beta_T  & = \frac{c_B}{e_A} \gamma_T, &  \frac{c_A}{e_A} \nu_T + \frac{e_B}{e_A} (\delta_T-1) & = \gamma_T, \\[0.2cm]
\frac{c_B}{e_A}(\delta_T-1) + \theta_T  & = \frac{c_A}{e_A} \mu_T, &  \frac{c_A}{e_B} \alpha_T + \frac{e_A}{e_B} (\delta_T - 1) & = \mu_T,
\end{align*}
which enable one to formulate the following:  
\begin{lemma} \label{lem:entries}
Suppose that $\delta_T>1$.
\begin{enumerate} 
\renewcommand{\theenumi}{(\alph{enumi})}
\renewcommand{\labelenumi}{{\theenumi}}
\item
If $\theta_T>0$ and $\nu_T>0$, then $\alpha_T>0$, $\beta_T>0$, $\gamma_T>0$ and $\mu_T>0$. 
\item 
If $\gamma_T<0$ and $\mu_T<0$, then $\alpha_T<0$, $\beta_T<0$, $\theta_T<0$ and $\nu_T<0$.
\end{enumerate}
\end{lemma}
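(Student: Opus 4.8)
The plan is to exploit the six linear identities displayed immediately before the lemma, each of which relates the quantities $\delta_T,\alpha_T,\beta_T,\gamma_T,\theta_T,\mu_T,\nu_T$ through coefficients that are ratios of the strictly positive parameters $c_A,c_B,e_A,e_B$. Since $\delta_T>1$ is assumed, the factor $\delta_T-1$ is strictly positive throughout, and every coefficient occurring in those identities is positive. The whole argument then reduces to reading off a sign from the appropriate identity: no chaining of implications is needed, because each target quantity is isolated by a single relation in which the remaining terms have a sign fixed by the hypotheses.

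For part (a) I would proceed as follows. The identity $\alpha_T=\frac{c_B}{e_A}\theta_T+\frac{e_B}{e_A}(\delta_T-1)$ expresses $\alpha_T$ as a sum of two positive terms under $\theta_T>0$ and $\delta_T>1$, so $\alpha_T>0$; identically $\gamma_T=\frac{c_A}{e_A}\nu_T+\frac{e_B}{e_A}(\delta_T-1)>0$ using $\nu_T>0$. The identity $\frac{c_A}{e_A}\mu_T=\frac{c_B}{e_A}(\delta_T-1)+\theta_T$ has a positive right-hand side, whence $\mu_T>0$, and $\frac{c_A}{e_B}\beta_T=\frac{c_B}{e_B}(\delta_T-1)+\nu_T>0$ forces $\beta_T>0$. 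Part (b) is the mirror image, now isolating each target on the side whose sign I already control: from $\frac{c_A}{e_A}\nu_T=\gamma_T-\frac{e_B}{e_A}(\delta_T-1)$ the right-hand side is a negative $\gamma_T$ minus a positive term, so $\nu_T<0$; likewise $\beta_T=\frac{c_B}{e_A}\gamma_T-\frac{c_A}{e_A}(\delta_T-1)<0$. Using $\mu_T<0$ instead, $\theta_T=\frac{c_A}{e_A}\mu_T-\frac{c_B}{e_A}(\delta_T-1)<0$, and $\frac{c_A}{e_B}\alpha_T=\mu_T-\frac{e_A}{e_B}(\delta_T-1)<0$ yields $\alpha_T<0$.

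There is essentially no obstacle in the deduction itself; the only genuine work lies in establishing the six identities, which I would verify by substituting the explicit expressions for $\delta_T,\alpha_T,\dots,\nu_T$ and checking that the monomials in $c_A,c_B,e_A,e_B$ cancel as claimed. The structural point worth emphasizing is that these identities are arranged so that, once $\delta_T>1$, positivity of the pair $\theta_T,\nu_T$ propagates to all of $\alpha_T,\beta_T,\gamma_T,\mu_T$, while negativity of the complementary pair $\gamma_T,\mu_T$ propagates to $\alpha_T,\beta_T,\theta_T,\nu_T$. This is precisely the complementarity of the hypotheses in \ref{RSPa} and \ref{RSPb} of Proposition~\ref{prop:index-RSP}, and it is what lets the redundant sign conditions be suppressed from the f.a.s.\ statement there.
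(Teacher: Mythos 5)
Your proposal is correct and is essentially the paper's own argument: the paper simply displays the six linear identities and asserts that they ``enable one to formulate'' the lemma, and your sign-chasing (each of $\alpha_T,\beta_T,\gamma_T,\mu_T$ isolated in an identity whose other terms are positive under $\theta_T,\nu_T>0$ and $\delta_T>1$, and dually for part (b)) is exactly the intended deduction. The identities themselves check out by direct substitution of the explicit expressions, as you indicate.
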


\bigskip

\paragraph{Proof of Proposition~\ref{prop:index-RSP}:}

We start with checking conditions \emph{(i)--(iii)} for each $\M^{(j)}$, $j=1,2,3$, in \eqref{eq:trans-M1}--\eqref{eq:trans-M3}. Due to similarity, the transition matrices $\M^{(j)}$ have all the same eigenvalues. By the fact that $\M^{(1)}$ is a lower triangular matrix, the eigenvalues are the entries in the main diagonal: $\lambda_1 = \delta_T$ and $\lambda_2= \lambda_3 = 1$. 
Condition~\emph{(i)} is naturally satisfied by taking $\lambda_{\max}=\delta_T$. For each $j=1,2,3$, denote by $\boldsymbol{w}^{\max,j}$ the eigenvector of $\M^{(j)}$ associated with the eigenvalue $\lambda_{\max}$. 
An easy computation shows that $\boldsymbol{w}^{\max,1}=(\delta_T-1, \, \alpha_T, \, \beta_T)^\text{T}$, $\boldsymbol{w}^{\max,2}=(\delta_T-1, \, \gamma_T, \, \theta_T)^\text{T}$ and $\boldsymbol{w}^{\max,3}=(\mu_T, \, \delta_T-1, \, \nu_T)^\text{T}$.

Condition~\emph{(ii)} is violated when $\delta_T<1$ while condition~\emph{(iii)} is violated for some $j$ when $\delta_T<1$ or $\alpha_T<0$ or $\beta_T<0$ or $\gamma_T<0$ or $\theta_T<0$ or $\mu_T<0$ or $\nu_T<0$. Proposition~\ref{prop:calc-index}(a) then establishes statement in (a).

On the other hand, that conditions~\emph{(ii)}--\emph{(iii)} hold true when $\delta_T>1$, $\theta_T > 0$ and $\nu_T > 0$ follows from Lemma~\ref{lem:entries}. 
Under these inequalities all $\M^{(j)}$ meet~\eqref{eq:U-infty} as a result of any $\boldsymbol{y} \in \R_-^3$ written in the eigenbasis of $\M^{(j)}$ having a negative coefficient for the largest eigenvector. Indeed, such a coefficient is of the form $\left(\boldsymbol{v}^{\max,j}\right)^\text{T} \boldsymbol{y}<0$ given that $\boldsymbol{v}^{\max,1}=\boldsymbol{v}^{\max,2}=\left(\frac{1}{\delta_T-1}, \, 0, \,0\right)^\text{T}$ and $\boldsymbol{v}^{\max,3}=\left(0, \, \frac{1}{\delta_T-1}, \, 0\right)^\text{T}$ admit all non-negative components.

In the calculation of $\sigma_j$ we evaluate the function $F^{\ind}$ for the rows of the transition matrices $\M_j$, $\M_{j+1} \M_j$ and $\M^{(j)}=\M_{j+2}\M_{j+1}\M_j$ $(j \text{ mod  } 3)$ so that
\begin{align*}
\sigma_{31} = \begin{aligned}[t]\min \Bigg\{ & F^{\ind}\left( \frac{c_B}{e_A},0,0\right), F^{\ind}\left( \frac{c_A}{e_A},0,1\right), F^{\ind}\left( -\frac{e_B}{e_A},1,0\right), \\[0.2cm]
& F^{\ind}\left( \frac{c_B^2}{e_A^2} - \frac{e_B}{e_A},1,0\right), F^{\ind}\left( \frac{c_A c_B}{e_A^2},0,0\right), F^{\ind}\left(- \frac{e_Bc_B}{e_A^2} + \frac{c_A}{e_A},1,0\right),  \\[0.2cm]
&F^{\ind}\left(\delta_T, 0, 0\right), F^{\ind}\left(\alpha_T, 1, 0\right),  F^{\ind}\left(\beta_T, 0, 1\right) \Bigg\},
\end{aligned}
\end{align*}

\begin{align*}
\sigma_{12} = \begin{aligned}[t]\min \Bigg\{ & F^{\ind}\left( \frac{c_B}{e_A},0,1\right), F^{\ind}\left( \frac{c_A}{e_A},0,0\right), F^{\ind}\left( -\frac{e_B}{e_A},1,0\right), \\[0.2cm]
& F^{\ind}\left( \frac{c_A^2}{e_B e_A},0,0\right), F^{\ind}\left( \frac{c_B}{e_A} - \frac{c_A}{e_B},0,1\right), F^{\ind}\left(\frac{c_Bc_A}{e_B e_A} - \frac{e_B}{e_A},1,0\right),  \\[0.2cm]
& F^{\ind}\left(\delta_T, 0, 0\right), F^{\ind}\left(\gamma_T, 1, 0\right),  F^{\ind}\left(\theta_T, 0, 1\right) \Bigg\},
\end{aligned}
\end{align*}
and 
\begin{align*}
\sigma_{23} = \begin{aligned}[t]\min \Bigg\{ & F^{\ind}\left( 0,\frac{c_A}{e_B},0\right), F^{\ind}\left( 1,-\frac{e_A}{e_B},0\right), F^{\ind}\left( 0,\frac{c_B}{e_B},1\right), \\[0.2cm]
& F^{\ind}\left( 0,\frac{c_B c_A}{e_B e_A},0\right), F^{\ind}\left( 0,\frac{c_A^2}{e_B e_A} + \frac{c_B}{e_B},0\right), F^{\ind}\left( 1,-\frac{c_A}{e_A} - \frac{e_A}{e_B},0\right), \\[0.2cm]
& F^{\ind}\left(1,\mu_T, 0\right), F^{\ind}\left(0,\delta_T, 0\right),  F^{\ind}\left(0,\nu_T, 1\right) \Bigg\}.
\end{aligned}
\end{align*}
It is immediate that
\begin{align*}
F^{\ind}\left( \frac{c_B}{e_A},0,0\right) = F^{\ind}\left( \frac{c_A}{e_A},0,1\right)  & = +\infty \\
F^{\ind}\left( \frac{c_B^2}{e_A^2} - \frac{e_B}{e_A},1,0\right) & = +\infty \\
F^{\ind}\left(\delta_T,0,0\right) = F^{\ind}\left(\alpha_T,1,0\right) = F^{\ind}\left(\beta_T,0,1\right) & = +\infty,
\end{align*}
\begin{align*}
F^{\ind}\left( \frac{c_B}{e_A},0,1\right) = F^{\ind}\left( \frac{c_A}{e_A},0,0\right) & = +\infty \\
F^{\ind}\left( \frac{c_A^2}{e_B e_A},0,0\right) & = +\infty \\
F^{\ind}\left(\delta_T, 0, 0\right) = F^{\ind}\left(\gamma_T, 1, 0\right) = F^{\ind}\left(\theta_T, 0, 1\right) & = +\infty,
\end{align*}
\begin{align*}
F^{\ind}\left( 0,\frac{c_A}{e_B},0\right) = F^{\ind}\left( 0,\frac{c_B}{e_B},1\right) & = +\infty  \\
F^{\ind}\left( 0,\frac{c_B c_A}{e_B e_A},0\right) = F^{\ind}\left( 0,\frac{c_A^2}{e_B e_A} + \frac{c_B}{e_B},0\right) & = +\infty  \\
F^{\ind}\left(1,\mu_T, 0\right) = F^{\ind}\left(0,\delta_T, 0\right) = F^{\ind}\left(0,\nu_T, 1\right)  & = +\infty.
\end{align*}
Moreover,
\[
\begin{aligned}
\alpha_T > 0 & \  \Leftrightarrow  \ \frac{c_B^2}{e_A^2} - \frac{e_B}{e_A} > \frac{c_A c_B}{e_B e_A} > 0    \\
\theta_T > 0 &  \ \Leftrightarrow \ \frac{c_B}{e_A} - \frac{c_A}{e_B} > \frac{c_A^2}{e_A^2} > 0  \  \Leftrightarrow \ -\frac{e_Bc_B}{e_A^2} + \frac{c_A}{e_A} = -\frac{e_B}{e_A} \left(  \frac{c_B}{e_A} - \frac{c_A}{e_B} \right) < 0
\end{aligned}
\]
and
\[
F^{\ind} \left( \frac{c_B^2}{e_A^2} - \frac{e_B}{e_A}, 1, 0 \right) = F^{\ind} \left(  \frac{c_B}{e_A} - \frac{c_A}{e_B}, 0, 1\right) = + \infty.
\]
It follows that
\begin{align*}
\sigma_{31} &  =  \min \left\{ F^{\ind}\left( -\frac{e_B}{e_A},1,0\right), F^{\ind}\left(- \frac{e_Bc_B}{e_A^2} + \frac{c_A}{e_A},1,0\right)\right\} \\
 \sigma_{12}  & = \min \left\{ F^{\ind}\left( -\frac{e_B}{e_A},1,0\right),  F^{\ind}\left(\frac{c_Bc_A}{e_B e_A} - \frac{e_B}{e_A},1,0\right)  \right\}  \\
\sigma_{23} & = \min \left\{ F^{\ind}\left( 1,-\frac{e_A}{e_B},0\right), F^{\ind}\left( 1,-\frac{c_A}{e_A} - \frac{e_A}{e_B},0\right) \right\}.
\end{align*}
We get
\begin{align*}
F^{\ind}\left( -\frac{e_B}{e_A},1,0\right) & =
\begin{cases}
1 - \dfrac{e_B}{e_A}  \; \;  (<0)  & \text{ if } \dfrac{e_B}{e_A} > 1 \\[0.5cm]
- 1 + \dfrac{e_A}{e_B} \; \;  (>0)  & \text{ if } \dfrac{e_B}{e_A} < 1
\end{cases} \\
F^{\ind}\left(- \frac{e_Bc_B}{e_A^2} + \frac{c_A}{e_A},1,0\right) & = 
\begin{cases}
1 - \dfrac{e_Bc_B}{e_A^2} + \dfrac{c_A}{e_A}  \; \;  (<0) & \text{ if }  - \dfrac{e_Bc_B}{e_A^2} + \dfrac{c_A}{e_A}  \leq - 1 \\[0.5cm] 
-1 + \dfrac{e_A^2}{e_B c_B - c_A e_A }\; \;  (>0) & \text{ if }  -1 < - \dfrac{e_Bc_B}{e_A^2} + \dfrac{c_A}{e_A}  < 0, 
\end{cases} \\[0.5cm]
F^{\ind}\left(\frac{c_Bc_A}{e_B e_A} - \frac{e_B}{e_A},1,0\right) & = 
\begin{cases}
+ \infty & \text{ if }  \dfrac{c_Bc_A}{e_B e_A} - \dfrac{e_B}{e_A} \geq 0 \\[0.5cm]
1+  \dfrac{c_Bc_A}{e_B e_A} - \dfrac{e_B}{e_A}  \; \;  (<0)  & \text{ if }  \dfrac{c_Bc_A}{e_B e_A} - \dfrac{e_B}{e_A}  \leq -1 \\[0.5cm] 
-1+\dfrac{e_A e_B }{e_B^2 - c_A c_B} \; \;  (>0) & \text{ if }  -1 <  \dfrac{c_Bc_A}{e_B e_A} - \dfrac{e_B}{e_A} < 0, \\ 
\end{cases} \\[0.5cm]
F^{\ind}\left(1, -\frac{e_A}{e_B},0\right) & =
\begin{cases}
1 - \dfrac{e_A}{e_B}  \; \;  (<0)  & \text{ if } \dfrac{e_B}{e_A} < 1 \\[0.5cm]
- 1 + \dfrac{e_B}{e_A} \; \;  (>0)  & \text{ if } \dfrac{e_B}{e_A} > 1
\end{cases} \\
F^{\ind}\left( 1,-\frac{c_A}{e_A} - \frac{e_A}{e_B},0\right) & =
\begin{cases}
1 -\dfrac{c_A}{e_A} - \dfrac{e_A}{e_B}   \; \;  (<0)  & \text{ if } \dfrac{c_A}{e_A} + \dfrac{e_A}{e_B} >  1 \\[0.5cm] 
-1 + \dfrac{e_A e_B}{e_A^2 + c_A e_B }  \; \;  (>0)  & \text{ if } \dfrac{c_A}{e_A} + \dfrac{e_A}{e_B} <  1.
\end{cases}
\end{align*}
By combining all suitable branches for each $\sigma_{ij}$, the proof is completed.

\subsection{Proposition~\ref{prop:index-4node}}\label{subsec:proof4nodes}

We determine the eigenvalues and eigenvectors of $\widehat{\M}^{(j)}$, $j=1,2,3,5$, in Appendix~\ref{app:4node}. To simplify consider the following notation:
\begin{align*}
\widehat{\M}^{(1)} & = \begin{bmatrix}
\alpha_{11} & 0 & \alpha_{13} \\[0.1cm]
\alpha_{21} & 1 & \alpha_{23} \\[0.1cm]
\alpha_{31} & 0 & \alpha_{33}
\end{bmatrix}, 
& \widehat{\M}^{(2)} & = \begin{bmatrix}
\beta_{11} & \beta_{12} & 0  \\[0.1cm]
\beta_{21} & \beta_{22} & 0 \\[0.1cm]
\beta_{31} & \beta_{32} & 1  
\end{bmatrix}, \\[0.4cm]
  \widehat{\M}^{(5)} & = \begin{bmatrix}
\gamma_{11} & \gamma_{12} & 0  \\[0.1cm]
\gamma_{21} & \gamma_{22} & 0 \\[0.1cm]
\gamma_{31} & \gamma_{32} & 1  
\end{bmatrix},
&  \widehat{\M}^{(3)} & = \begin{bmatrix}
1 & \delta_{12} & \delta_{13} \\[0.1cm]
0 & \delta_{22} & \delta_{23} \\[0.1cm]
0 & \delta_{32} & \delta_{33}
\end{bmatrix}.
\end{align*}
We observe that

\begin{align}
\alpha_{13} & = \frac{c_B}{e_A} \alpha_{31} = \frac{e_A}{c_B} \beta_{12} = \frac{e_A}{e_B} \delta_{23} = - \frac{e_A^2}{e_B^2} \theta_T  \label{eq:alpha13}\\[0.2cm]
\gamma_{12} & =\frac{c_A}{e_B} \gamma_{21}  = \frac{e_B}{c_A} \beta_{21} =\frac{c_A}{c_B} \delta_{32} = -\frac{c_A e_A}{e_B^2} \nu_T. \label{eq:gamma12}
\end{align}
As $\widehat{\M}^{(j)}$ are similar, all have the same eigenvalues. For $\widehat{\M}^{(1)}$ as defined above the eigenvalues are
\begin{align*}
\lambda_1 & =  \frac{\alpha_{11} + \alpha_{33} + \sqrt{\left(\alpha_{11} + \alpha_{33} \right)^2 - 4 \, \frac{c_B^3 c_A}{e_B^3 e_A}}}{2} = \frac{\alpha_{11} + \alpha_{33} + \sqrt{\left(\alpha_{11} - \alpha_{33} \right)^2 + 4 \alpha_{13}  \alpha_{31}}}{2}, \\
\lambda_2 & =  \frac{\alpha_{11} + \alpha_{33} - \sqrt{\left(\alpha_{11} + \alpha_{33} \right)^2 - 4 \, \frac{c_B^3 c_A}{e_B^3 e_A}}}{2} = \frac{\alpha_{11} + \alpha_{33} - \sqrt{\left(\alpha_{11} - \alpha_{33}\right)^2 + 4 \alpha_{13}  \alpha_{31}}}{2}, \\[0.2cm]
\lambda_3 & = 1.
\end{align*}
The candidate for $\lambda_{\max}$ satisfying conditions~\emph{(i)}--\emph{(iii)} is~$\lambda_1$. 
From~\eqref{eq:alpha13} 
we find that 
\begin{equation}
\label{eq:product-a}
\alpha_{13} \alpha_{31} = \frac{c_B}{e_A} \alpha_{13}^2 = 
\frac{e_A^4}{e_B^4} \theta_T^2  \geq 0,
\end{equation}
which ensures that $\lambda_{1}$ and $\lambda_2$ are real. Condition~\emph{(i)} is immediately true. Moreover,
\[
\alpha_{11} + \alpha_{33} =  \frac{c_A^3 e_B + \left(c_A e_A - c_B e_B \right)^2 + c_A c_B e_A e_B +  c_B e_A^3}{e_B^3 e_A} > 0
\]
and, in consequence, $\lambda_1 > |\lambda_2|$.

Let $\lambda_{\max}=\lambda_1$ and $\boldsymbol{w}^{\max,j} = \left(w_1^{\max,j}, w_2^{\max,j}, w_3^{\max,j}\right)^\text{T}$ the corresponding eigenvector of each $\widehat{\M}^{(j)}$. We get
\begin{align*}
\boldsymbol{w}^{\max,1} & = \Big(  \alpha_{13} \left(\lambda_{\max} - 1 \right), \ \alpha_{13} \alpha_{21} + \alpha_{23} \left(\lambda_{\max} - \alpha_{11} \right), \ \left(\lambda_{\max} - \alpha_{11} \right)  \left(\lambda_{\max} - 1 \right) \Big)^\text{T} \\
\boldsymbol{w}^{\max,2} & = \Big(  \beta_{12}  \left(\lambda_{\max} - 1 \right), \ \left(\lambda_{\max} - \beta_{11} \right)  \left(\lambda_{\max} - 1 \right), \ \beta_{12} \beta_{31} + \beta_{32} (\lambda_{\max} - \beta_{11}) \Big)^\text{T} \\
\boldsymbol{w}^{\max,5} & = \Big(  \gamma_{12}  \left(\lambda_{\max} - 1 \right), \ (\lambda_{\max} - \gamma_{11})  \left(\lambda_{\max} - 1 \right), \ \gamma_{12} \gamma_{31} + \gamma_{32} (\lambda_{\max} - \gamma_{11}) \Big)^\text{T} \\
\boldsymbol{w}^{\max,3} & = \Big( \delta_{12} \delta_{23} + \delta_{13} \left(\lambda_{\max} - \delta_{22} \right), \ \delta_{23}  \left(\lambda_{\max} - 1 \right), \ \left(\lambda_{\max} - \delta_{22} \right)  \left(\lambda_{\max} - 1 \right) \Big)^\text{T}.
\end{align*} 

Condition~\emph{(ii)} is satisfied if and only if
\[
\alpha_{11} + \alpha_{33} > \min \left\{ 2, 1 + \frac{c_B^3 c_A}{e_B^3 e_A}\right\}.
\]

Condition~\emph{(iii)} requires the evaluation of the signs of the components of $\boldsymbol{w}^{\max,j}$. 
Note that 
\begin{align*}
\left(\alpha_{11} - \alpha_{33}\right)^2 + 4 \alpha_{13}  \alpha_{31} &= \left(\beta_{11} - \beta_{22}\right)^2 + 4 \beta_{12}  \beta_{21} \\
& = \left(\gamma_{11} - \gamma_{22}\right)^2 + 4 \gamma_{12}  \gamma_{21} \\
& = \left(\delta_{22} - \delta_{33}\right)^2 + 4 \delta_{23}  \delta_{32}.
\end{align*}
It is easily seen that $\lambda_{\max} - \alpha_{11} > 0$ and $\lambda_{\max} - \gamma_{11} > 0$ 
because of \eqref{eq:product-a} and $\gamma_{12} \gamma_{21} = \frac{c_A}{e_B} \gamma_{21}^2  = 
\frac{c_A^2 e_A^2}{e_B^4} \nu_T^2  \geq 0$, 
respectively. Assuming $\lambda_{\max}>1$ to hold for every~$j$, we have $w_3^{\max,1} >0$ and $w_2^{\max,5} >0$. 
By~\eqref{eq:alpha13}
all components of $\boldsymbol{w}^{\max,j}$ have the same sign when
\begin{align*}
[j=1]& \quad \alpha_{13} > 0 \quad \text{and} \quad w_2^{\max,1}=\alpha_{13} \alpha_{21} + \alpha_{23} \left(\lambda_{\max} - \alpha_{11} \right) > 0, \\[0.2cm]
[j=2]& \quad  \lambda_{\max} - \beta_{11}>0  \quad \text{and} \quad  w_3^{\max,2}=\beta_{12} \beta_{31} + \beta_{32} (\lambda_{\max} - \beta_{11}) > 0, \\[0.2cm]
[j=5]& \quad  \gamma_{12} > 0 \quad \text{and} \quad  w_3^{\max,5}=\gamma_{12} \gamma_{31} + \gamma_{32} (\lambda_{\max} - \gamma_{11}) > 0, \\[0.2cm]
[j=3]& \quad  w_1^{\max,3}=\delta_{12} \delta_{23} + \delta_{13} \left(\lambda_{\max} - \delta_{22} \right) > 0 \quad \text{and} \quad \lambda_{\max} -\delta_{22} >0.
\end{align*}
Given \eqref{eq:gamma12} it means that $\alpha_{13} > 0$ and $\gamma_{12} > 0$ imply $\beta_{12} \beta_{21}  = \frac{c_A c_B}{e_A e_B}  \alpha_{13} \gamma_{12}> 0$ and $\delta_{23} \delta_{32} = \frac{c_A e_B}{c_B e_A} \alpha_{13} \gamma_{12} >0$. Hence, $\lambda_{\max} - \beta_{11} >0$ and $\lambda_{\max} - \delta_{22}>0$. 
What is left is to check 
\begin{equation}\label{eq:4node}
w_2^{\max,1}>0, \quad w_3^{\max,2}>0, \quad w_3^{\max,5}>0 \quad \text{ and } \quad w_1^{\max,3}>0.
\end{equation}
Using similarity, we can establish
\begin{align*}
\boldsymbol{w}^{\max,2}  &= \widehat{\M}_1 \boldsymbol{w}^{\max,1} \\[0.2cm]
\boldsymbol{w}^{\max,5}  &= \frac{e_B}{c_A} \frac{\gamma_{12}}{\lambda_{\max} - \beta_{11}}\widehat{\M}_2  \boldsymbol{w}^{\max,2} \\[0.2cm]
\boldsymbol{w}^{\max,3}  &= \frac{e_B}{c_B} \frac{\lambda_{\max}-\delta_{22}}{\lambda_{\max} - \gamma_{11}}\widehat{\M}_5  \boldsymbol{w}^{\max,5}.
\end{align*}
It follows that
\begin{align*}
w_3^{\max,2} & = -\frac{e_A}{e_B} \, \alpha_{13} \left(\lambda_{\max}-1\right) + w_2^{\max,1} \\[0.2cm]
w_3^{\max,5} & = \frac{e_B}{c_A} \frac{\gamma_{12}}{\lambda_{\max} - \beta_{11}} \left[ \frac{c_B}{e_B} \left(\lambda_{\max}-\beta_{11}\right) \left(\lambda_{\max}-1\right)+ w_3^{\max,2} \right]  \\[0.2cm]
 w_1^{\max,3} & = \frac{e_B}{c_B} \frac{\lambda_{\max}-\delta_{22}}{\lambda_{\max} - \gamma_{11}} \left[ \frac{c_A}{e_B} \left(\lambda_{\max}-\gamma_{11}\right) \left(\lambda_{\max}-1\right) + w_3^{\max,5} \right].
\end{align*}
Accordingly, if $w_3^{\max,2} >0$, then $w_2^{\max,1} >0$, $w_3^{\max,5} >0$ and $ w_1^{\max,3}>0$. We conclude that condition~\emph{(iii)} is fulfilled for
\begin{align*}
\alpha_{13} > 0 & \  \Leftrightarrow \ \theta_T < 0 \\[0.2cm]  
\gamma_{12} > 0 & \  \Leftrightarrow \  \nu_T < 0 \\[0.2cm] 
w_3^{\max,2} > 0 & \  \Leftrightarrow \  \begin{aligned}[t]  c_A^3 c_B & + 2\, c_B^2 c_A e_A - c_B^3 e_A e_B \\ & + \left( -e_A^4 - 3 \, e_A^2 c_A e_B-c_A^2 e_B^2 +2  \, e_B^2 c_B e_A\right)\lambda_{\max}>0. \end{aligned}
\end{align*}

The proof of (a) is immediate.
Under the hypotheses of (b), the equality~\eqref{eq:U-infty} holds for all $j$. In fact, any $\boldsymbol{y} \in \R_-^3$ written in the eigenbasis of $\widehat{\M}^{(j)}$ has a negative coefficient of the form $\left(\boldsymbol{v}^{\max,j}\right)^\text{T} \boldsymbol{y}$, where
\begin{align*}
\boldsymbol{v}^{\max,1} & = \left( \frac{\alpha_{11} - \lambda_2}{\left(\lambda_1-1\right) \left(\lambda_1-\lambda_2\right) \alpha_{13}}, \  0, \ \frac{1}{\left(\lambda_1-1\right) \left(\lambda_1-\lambda_2\right)}\right)^\text{T} \\
\boldsymbol{v}^{\max,2} & = \left( \frac{\beta_{11} - \lambda_2}{\left(\lambda_1-1\right) \left(\lambda_1-\lambda_2\right) \beta_{12}}, \ \frac{1}{\left(\lambda_1-1\right) \left(\lambda_1-\lambda_2\right)}, \ 0 \right)^\text{T} \\
\boldsymbol{v}^{\max,5} & = \left( \frac{\gamma_{11} - \lambda_2}{\left(\lambda_1-1\right) \left(\lambda_1-\lambda_2\right) \gamma_{12}}, \ \frac{1}{\left(\lambda_1-1\right) \left(\lambda_1-\lambda_2\right)}, \ 0 \right)^\text{T} \\
\boldsymbol{v}^{\max,3} & = \left( 0, \ \frac{\delta_{22} - \lambda_2}{\left(\lambda_1-1\right) \left(\lambda_1-\lambda_2\right) \delta_{13}}, \ \frac{1}{\left(\lambda_1-1\right) \left(\lambda_1-\lambda_2\right)}\right)^\text{T}.
\end{align*}
A trivial verification shows that $\lambda_1 - \lambda_2>0$, $\alpha_{11} - \lambda_2>0$, $\beta_{11} - \lambda_2>0$, $\gamma_{11} - \lambda_2>0$ and $\delta_{22} - \lambda_2>0$. Therefore, all $\boldsymbol{v}^{\max,j}$ admit non-negative components. 

We calculate the stability indices by plugging the rows of the transition matrices $\widehat{\M}_j$, $\widehat{\M}_{(l,j)}$, $\widehat{\M}^{(j)}$ with at least one negative entry into $F^\text{index}$. We thus get
\begin{align*}
\sigma_{31}  =   \min \bigg\{ & F^\textnormal{index} \left(-\frac{e_B}{e_A},1,0\right),  \\
& F^\textnormal{index}  \left( \frac{c_B}{e_A} - \frac{c_A}{e_B}, 0, -\frac{e_A}{e_B} \right), 
F^\textnormal{index}  \left( \frac{c_B c_A}{e_B e_A} - \frac{e_B}{e_A}, 1, \frac{c_B}{e_B} \right), \\
& F^\textnormal{index} \left(-\frac{c_A^2}{e_B^2} + \frac{2\, c_B c_A}{e_B e_A} - \frac{e_B}{e_A}, 1, -\frac{c_A e_A}{e_B^2} + \frac{c_B}{e_B} \right), \\
& F^\textnormal{index} \left( -\frac{e_A^2 c_A}{e_B^3} - \frac{2 \, c_A^2 - c_B e_A }{e_B^2} + \frac{2 \, c_B c_A}{e_B e_A} - \frac{e_B}{e_A}, 1, -\frac{e_A^3}{e_B^3} - \frac{2 \, c_A e_A}{e_B^2} +\frac{c_B}{e_B}  \right)  \bigg\} \\
\leq  \ & F^\textnormal{index} \left(-\frac{e_B}{e_A},1,0\right);
\end{align*}
\begin{align*}
\sigma_{12}  =   \min \bigg\{ & F^\textnormal{index} \left(1,-\frac{e_B}{e_A},0\right),  \\
& F^\textnormal{index}  \left( -\frac{e_A}{e_B}, \frac{e_A^2}{e_B^2} + \frac{c_A}{e_B}, 0 \right), \\
& F^\textnormal{index} \left(\frac{e_A^2}{e_B^2} + \frac{c_A}{e_B}, -\frac{e_A^3}{e_B^3} - \frac{2 \, c_A e_A}{e_B^2} + \frac{c_B}{e_B}, 1 \right),
F^\textnormal{index} \left( -\frac{c_B e_A}{e_B^2}, \frac{c_B e_A^2}{e_B^3} + \frac{c_B c_A}{e_B^2}, 0  \right), \\
& F^\textnormal{index} \left(  \frac{e_A^2}{e_B^2} + \frac{2 \, c_A}{e_B} -\frac{c_B}{e_A}, -\frac{e_A^3}{e_B^3} - \frac{3 \, c_A e_A}{e_B^2} - \frac{c_A^2}{e_B e_A} + \frac{2 \, c_B}{e_B}, 1 \right)  \bigg\} \\
\leq  \ & F^\textnormal{index} \left(1,-\frac{e_B}{e_A},0\right); 
\end{align*}
\begin{align*}
\sigma_{25}  =   \min \bigg\{ & F^\textnormal{index} \left(1,-\frac{e_B}{e_A},0\right),  \\
& F^\textnormal{index}  \left(-\frac{e_A}{e_B}, \frac{e_A^2}{e_B^2} +  \frac{c_A}{e_B}, 1  \right), F^\textnormal{index} \left( \frac{c_B}{e_B}, -\frac{c_B e_A}{e_B^2} , 0 \right), \\
& F^\textnormal{index} \left( -\frac{c_A}{e_A} - \frac{e_A}{e_B}, \frac{e_A^2}{e_B^2} + \frac{2 \, c_A}{e_B} - \frac{c_B}{e_A}, 1 \right), \\
& F^\textnormal{index} \left( \frac{c_A^2 c_B}{e_B^2 e_A} + \frac{c_B^2}{e_B^2} - \frac{c_A}{e_A} - \frac{e_A}{e_B}, - \frac{c_A^2 c_B + c_B^2 e_A}{e_B^3} + \frac{c_B^2 c_A}{e_B^2 e_A} + \frac{e_A^2}{e_B^2} + \frac{2 \, c_A}{e_B} -\frac{c_B}{e_A}, 1 \right)  \bigg\}\\
\leq  \ & F^\textnormal{index} \left(1,-\frac{e_B}{e_A},0\right);
\end{align*}
and
\begin{align*}
\sigma_{53}  =   \min \bigg\{ & F^\textnormal{index} \left(1,-\frac{e_B}{e_A}, 0\right),  \\
& F^\textnormal{index}  \left( 1 , -\dfrac{c_A}{e_A} - \dfrac{e_A}{e_B} , -\dfrac{e_B}{e_A} \right), \\
& F^\textnormal{index} \left( 1 , \dfrac{c_A^2 c_B}{e_B^2 e_A} + \dfrac{c_B^2}{e_B^2} - \dfrac{c_A}{e_A} - \dfrac{e_A}{e_B} , \dfrac{c_B c_A}{e_B e_A} - \dfrac{e_B}{e_A}  \right), \\
& F^\textnormal{index} \left(  1, -\dfrac{c_A^3 + e_A c_B c_A}{e_B^3} + \dfrac{2 \, c_A^2 c_B}{e_B^2 e_A} + \dfrac{c_B^2}{e_B^2} - \dfrac{c_A}{e_A} - \dfrac{e_A}{e_B} , - \dfrac{c_A^2}{e_B^2} + \dfrac{2 \, c_B c_A}{e_B e_A} - \dfrac{e_B}{e_A}  \right) \bigg\} \\
\leq \ & F^\textnormal{index} \left(1,-\frac{e_B}{e_A},0\right).
\end{align*}

\subsection{Proposition~\ref{prop:stab_as}}\label{subsec:stab_cycles_as_network}

%We use Theorem 3.1 in Lohse \cite{Lohse2015} showing that if all stability indices are positive then the cycle is e.a.s. whereas f.a.s. is obtained from Lemma 2.5 in \cite{GarCas2019}. It follows from its definition in \cite{PodviginaAshwin2011} that a stability index equal to $-\infty$ implies the complete instability of the cycle. 

The relations (a2) and (a3) in Lemma~5.6 determine the first and third conditions in Proposition~5.2. The remaining condition is $c_A e_A - c_B e_B < 0$. If it is satisfied, then $\sigma_\text{R-to-P}=-\infty$, otherwise $\sigma_\text{R-to-P}>-\infty$. In the latter case, we further check the sign of~$\sigma_\text{R-to-P}$. Taking~(20) negative entries can only occur in the last row of each $(\M_2)^j$, $j=1,\ldots,5$. It follows that 
\begin{align*}
\sigma_\text{R-to-P}   =  \\
  \min \bigg\{ 
& F^\textnormal{index} \left(-\frac{e_B}{e_A},1,0\right),  \\
& F^\textnormal{index}  \left(-\frac{e_B c_B}{e_A^2} + \frac{c_A}{e_A}, 0, -\frac{e_B}{e_A} \right), \\
& F^\textnormal{index} \left(-\frac{c_B^2 e_B}{e_A^3} + \frac{c_A c_B + e_B^2}{e_A^2}, -\frac{e_B}{e_A}, -\frac{e_B c_B}{e_A^2} + \frac{c_A}{e_A} \right), \\
& F^\textnormal{index} \left( -\frac{c_B^3 e_B}{e_A^4} + \frac{c_B^2 c_A + 2 \, e_B^2 c_B}{e_A^3}  - \frac{2 \, c_A e_B}{e_A^2}, -\frac{e_B c_B}{e_A^2} + \frac{c_A}{e_A}, -\frac{c_B^2 e_B}{e_A^3} + \frac{c_A c_B + e_B^2}{e_A^2} \right), \\
& F^\textnormal{index} \, \bigg( \begin{aligned}[t] & -\frac{c_B^4 e_B}{e_A^5} + \frac{c_B^3 c_A + 3 \, c_B^2 e_B^2}{e_A^4} - \frac{4\, c_A c_B e_B + e_B^3}{e_A^3} +\frac{c_A^2}{e_A^2}, -\frac{c_B^2 c_A}{e_A^3} + \frac{c_A c_B + e_B^2}{e_A^2}, \\
& -\frac{c_B^3 e_B}{e_A^4} + \frac{c_B^2 c_A + 2\, e_B^2 c_B}{e_A^3} - \frac{2\, c_A e_B}{e_A^2} \bigg) \bigg\}. \end{aligned} 
\end{align*}

According to the function $F^\text{index}$, 
consider the rows in $(\M_2)^j$ with at least with negative entries and define the sums of the row elements as follows:
\begin{align*}
\mathfrak{s}_1 & = -\frac{e_B}{e_A} + 1 \\[0.1cm]
\mathfrak{s}_2 & =  -\frac{e_B c_B}{e_A^2} + \frac{c_A}{e_A} -\frac{e_B}{e_A} \\[0.1cm]
\mathfrak{s}_3 & = -\frac{c_B^2 e_B}{e_A^3} + \frac{c_A c_B + e_B^2}{e_A^2} -\frac{e_B}{e_A} -\frac{e_B c_B}{e_A^2} + \frac{c_A}{e_A} \\[0.1cm]
\mathfrak{s}_4 & =  -\frac{c_B^3 e_B}{e_A^4} + \frac{c_B^2 c_A + 2 \, e_B^2 c_B}{e_A^3}  - \frac{2 \, c_A e_B}{e_A^2} -\frac{e_B c_B}{e_A^2} + \frac{c_A}{e_A} -\frac{c_B^2 e_B}{e_A^3} + \frac{c_A c_B + e_B^2}{e_A^2} \\[0.1cm]
\mathfrak{s}_5 & = \begin{aligned}[t] & -\frac{c_B^4 e_B}{e_A^5} + \frac{c_B^3 c_A + 3 \, c_B^2 e_B^2}{e_A^4} - \frac{4\, c_A c_B e_B + e_B^3}{e_A^3} +\frac{c_A^2}{e_A^2} -\frac{c_B^2 c_A}{e_A^3} + \frac{c_A c_B + e_B^2}{e_A^2} \\
&-\frac{c_B^3 e_B}{e_A^4} + \frac{c_B^2 c_A + 2\, e_B^2 c_B}{e_A^3} - \frac{2\, c_A e_B}{e_A^2}.\end{aligned}
\end{align*}
A trivial verification shows that
\[
\mathfrak{s}_2 \leq \mathfrak{s}_3 < \frac{c_A e_A - c_B e_B}{e_A^2} < \mathfrak{s}_5 \leq \mathfrak{s}_4.
\]
By virtue of $e_B < e_A$ and $c_A e_A - c_B e_B>0$, we get immediately $\mathfrak{s}_1>0$ and $\mathfrak{s}_4 \geq \mathfrak{s}_5 > 0$. Hence,
\begin{align*}
& F^\textnormal{index} \left(-\frac{e_B}{e_A},1,0\right)   > 0 \\
& F^\textnormal{index} \left( -\frac{c_B^3 e_B}{e_A^4} + \frac{c_B^2 c_A + 2 \, e_B^2 c_B}{e_A^3}  - \frac{2 \, c_A e_B}{e_A^2}, -\frac{e_B c_B}{e_A^2} + \frac{c_A}{e_A}, -\frac{c_B^2 e_B}{e_A^3} + \frac{c_A c_B + e_B^2}{e_A^2} \right)   > 0 \\
&F^\textnormal{index} \, \bigg( \begin{aligned}[t] & -\frac{c_B^4 e_B}{e_A^5} + \frac{c_B^3 c_A + 3 \, c_B^2 e_B^2}{e_A^4} - \frac{4\, c_A c_B e_B + e_B^3}{e_A^3} +\frac{c_A^2}{e_A^2}, -\frac{c_B^2 c_A}{e_A^3} + \frac{c_A c_B + e_B^2}{e_A^2}, \\
& -\frac{c_B^3 e_B}{e_A^4} + \frac{c_B^2 c_A + 2\, e_B^2 c_B}{e_A^3} - \frac{2\, c_A e_B}{e_A^2} \bigg) > 0. \end{aligned}
\end{align*}
We are now reduced to three possibilities: 
\begin{enumerate}
\item[(I)] $0< \mathfrak{s}_2 \leq \mathfrak{s}_3$, which is equivalent to $c_A e_A - c_B e_B> e_A e_B$. We obtain
\begin{align*}
& F^\textnormal{index}  \left(-\frac{e_B c_B}{e_A^2} + \frac{c_A}{e_A}, 0, -\frac{e_B}{e_A} \right) > 0\\
& F^\textnormal{index} \left(-\frac{c_B^2 e_B}{e_A^3} + \frac{c_A c_B + e_B^2}{e_A^2}, -\frac{e_B}{e_A}, -\frac{e_B c_B}{e_A^2} + \frac{c_A}{e_A} \right) > 0
\end{align*}
and consequently $\sigma_\text{R-to-P} > 0$.
\item[(II)] $\mathfrak{s}_2 \leq 0 \leq \mathfrak{s}_3$. We obtain
\begin{align*}
& F^\textnormal{index}  \left(-\frac{e_B c_B}{e_A^2} + \frac{c_A}{e_A}, 0, -\frac{e_B}{e_A} \right) < 0\\
& F^\textnormal{index} \left(-\frac{c_B^2 e_B}{e_A^3} + \frac{c_A c_B + e_B^2}{e_A^2}, -\frac{e_B}{e_A}, -\frac{e_B c_B}{e_A^2} + \frac{c_A}{e_A} \right) > 0
\end{align*}
and consequently $\sigma_\text{R-to-P} < 0$.
\item[(III)] $\mathfrak{s}_2 \leq \mathfrak{s}_3 < 0$. We obtain
\begin{align*}
& F^\textnormal{index}  \left(-\frac{e_B c_B}{e_A^2} + \frac{c_A}{e_A}, 0, -\frac{e_B}{e_A} \right) < 0\\
& F^\textnormal{index} \left(-\frac{c_B^2 e_B}{e_A^3} + \frac{c_A c_B + e_B^2}{e_A^2}, -\frac{e_B}{e_A}, -\frac{e_B c_B}{e_A^2} + \frac{c_A}{e_A} \right) < 0
\end{align*}
and consequently $\sigma_\text{R-to-P} < 0$.
\end{enumerate}

In Proposition~\ref{prop:ind-Star} only the first condition in (a) and (b) is determined by~\eqref{hyp:as}. When the stability index $\sigma_\text{Star}$ is finite, it is negative since $e_B<e_A$ and hence $F^\text{index} \left(1,-\frac{e_A}{e_B},0\right) = 1 - \frac{e_A}{e_B} <0$. The Star cycle is at most f.a.s.

From Proposition~\ref{prop:index-RSP},
the stability indices for $\Sigma_\text{RSP}$ reduce to
\begin{align*}
\sigma_{31} &  = \begin{cases}
1 - \dfrac{e_Bc_B}{e_A^2} + \dfrac{c_A}{e_A} \; \;  (<0) & \text{ if }  - \dfrac{e_Bc_B}{e_A^2} + \dfrac{c_A}{e_A}  \leq - 1 \\[0.5cm]
\min \left\{ -1 + \dfrac{e_A}{e_B},  -1 + \dfrac{e_A^2}{e_B c_B - c_A e_A }\right\}\; \;  (>0) & \text{ if }  -1 < - \dfrac{e_Bc_B}{e_A^2} + \dfrac{c_A}{e_A}  < 0 
\end{cases} \\
\sigma_{12}  & = -1 + \dfrac{e_A}{e_B}   \; \;  (>0) \\ 
\sigma_{23} & = 1- \frac{c_A}{e_A}  -  \frac{e_A}{e_B}   \; \;  (<0).
\end{align*}
It is easy to see that at least $\sigma_{23}<0$ always preventing this sub-cycle from being e.a.s.


\begin{thebibliography}{99}

\bibitem{AfrMosYou}
V.S.\ Afraimovich, G.\ Moses and T.\ Young (2016) Two-dimensional heteroclinic attractor in the generalized Lotka-Volterra system, \emph{Nonlinearity}~{\bf 29}, 1645--1667.

\bibitem{AshCasLoh}
P. Ashwin, S.B.S.D.\ Castro and A.\ Lohse (2020) Almost complete and equable heteroclinic networks, \emph{Journal of Nonlinear Science}~{\bf 30}~(1), 1--22.

 \bibitem{AshPos2013}
 P. Ashwin and C. Postlethwaite (2013) On designing heteroclinic networks from graphs, 
 \emph{Physica D}~{\bf 265}~(1), 26--39.

\bibitem{Che_et_al2014}
H.\ Cheng, N.\ Yao, Z.-G.\ Huang, J.\ Park, Y.\ Do, and Y.-C.\ Lai (2014)
Mesoscopic Interactions and Species Coexistence in Evolutionary Game Dynamics of Cyclic Competitions,
\emph{Nature Scientific Reports}~{\bf 4}:7486.

 \bibitem{Field1989}
 {M.J.\ Field} (1989) Equivariant bifurcation theory and symmetry breaking,
 \emph{Journal of Dynamics and Differential Equations}~{\bf 1}, 369--421.
 
 \bibitem{Field2015}
 M.J.\ Field (2015) Heteroclinic networks in homogeneous and heterogeneous identical cell systems,
 \emph{Journal of Nonlinear Science} {\bf 25}, 779--813.
 
  \bibitem{FieldSwift}
 M.J.\ Field and J.W.\ Swift (1991) Stationary bifurcation to limit cycles and heteroclinic cycles,
 \emph{Nonlinearity} {\bf 4}, 1001--1043.
 
 \bibitem{GarCas2019}
 L.\ Garrido-da-Silva and S.B.S.D.\ Castro (2019)
 Stability of quasi-simple heteroclinic cycles,
 \emph{Dynamical Systems: an International Journal}~{\bf 34}~(1), 14--39.

\bibitem{GeiKneFre}
P.M.\ Geiger, J.\ Knebel and E.\ Frey (2018) 
Topologically robust zero-sum games and Pfaffian orientation: How network topology determines the long-time dynamics of the antisymmetric Lotka-Volterra equation,
\emph{Physical Review E}~{\bf 98}, 062316.

\bibitem{HeTauZia}
Q.\ He, U.C.\ Ta\"uber and R.K.P.\ Zia (2012)
On the relationship between cyclic and hierarchical three-species predator-prey systems and the two-species Lotka-Volterra model,
\emph{The European Physical Journal B}~{\bf 85}:141.

\bibitem{Kan_et_al2013}
Y.\ Kang, Q.\ Pan, X.\ Wang, and M.\ He (2013)
A golden point rule in rock-paper-scissors-lizard-spock game,
\emph{Physica A: Statistical Mechanics and its Applications}~{\bf 392}~(11), 2652--2659.

\bibitem{Kan_et_al2016a}
Y.\ Kang, Q.\ Pan, X.\ Wang, and M.\ He (2016)
A Five Species Cyclically Dominant Evolutionary Game with Fixed Direction: A New Way to Produce Self-Organized Spatial Patterns,
\emph{Entropy} {\bf 18}~(8):284.

\bibitem{Kne_et_al2013}
J.\ Knebel, T.\ Kr\"uger, M.F.\ Weber, and E.\ Frey (2013)
Coexistence and Survival in Conservative Lotka-Volterra Networks,
\emph{Physical Review Letters}~{\bf 110}:168106.

 \bibitem{LaiSch}
 R.A.\ Laird and B.S.\ Schamp (2009)
 Species coexistence, intransitivity, and topological variation in competitive tournaments,
 \emph{Journal of Theoretical Biology}~{\bf 256}, 90--95.
 
 \bibitem{Lohse2015}
 {A.\ Lohse} (2015)
 {Stability of heteroclinic cycles in transverse bifurcations},
 \emph{Physica D}~{\bf 310}, {95--103}.

 \bibitem{Melbourne1991}
 {I.\ Melbourne} (1991)
 {An example of a non-asymptotically stable attractor},
 \emph{Nonlinearity}~{\bf 4}, {835--844}.
 
\bibitem{MowRomPlei}
S.\ Mowlaei, A.\ Roman, and M.\ Pleimling (2014)
Spirals and coarsening patterns in the competition of many species: a complex Ginzburg–Landau  approach,
\emph{Journal of Physics A: Mathematical and Theoretical}~{\bf 47} 165001.
 
 \bibitem{Par_et_al2017}
 J.\ Park, Y.\ Do, B.\ Jang, and Y.-C.\ Lai (2017)
 Emergence of unusual coexistence states in cyclic game systems,
 \emph{Nature Scientific Reports}~{\bf 7}:7465.
 
 \bibitem{ParJan}
 J.\ Park and B.\ Jang (2019)
 Robust coexistence with alternative competition strategy in the spatial cyclic game of five species,
 \emph{Chaos}~{\bf 29} 051105.
 
 \bibitem{Podvigina2012}
 {O.\ Podvigina} (2012)
 {Stability and bifurcations of heteroclinic cycles of  type $Z$},
 \emph{Nonlinearity}~{\bf 25}, {1887--1917}.

\bibitem{Podvigina2013}
 {O.\ Podvigina} (2013)
 {Classification and stability of simple homoclinic cycles in $\R^5$},
 \emph{Nonlinearity}~{\bf 26}, {1501--1528}.
 
 \bibitem{PodviginaAshwin2011}
 {O.\ Podvigina and P.\ Ashwin} (2011)
 {On local attraction properties and a stability index for heteroclinic connections},
 \emph{Nonlinearity}~{\bf 24}, {887--929}.

\bibitem{PodCasLab2020}
O.\ Podvigina, S.B.S.D.\ Castro and I.S.\ Labouriau (2020) 
Asymptotic stability of robust heteroclinic networks,
\emph{Nonlinearity}~{\bf 33}, 1757--1788.

\bibitem{PosRuc}
{C.M.\ Postlethwaite and A.M.\ Rucklidge} (2022)
{Stability of cycling behaviour near a heteroclinic network model of Rock-Paper-Scissors-Lizard-Spock}, \emph{Nonlinearity, to appear} (arXiv:2010.10666v3 [math.DS]).
 
\bibitem{SzoOliBaz}
A.\ Szolnoki, B.F.\ de Oliveira and D.\ Bazeia (2020)
Pattern formations driven by cyclic interactions: A brief review of recent developments,
\emph{EPL (Europhysics Letters)}~{\bf 131} 68001.

\bibitem{VukSzoSza}
J.\ Vukov, A.\ Szolnoki, and G.\ Szab\'o (2013)
Diverging fluctuations in a spatial five-species cyclic dominance game,
\emph{Physical Review E}~{\bf 88} 022123.

\end{thebibliography}
\end{document}